\documentclass[leqno,tbtags]{amsart}
\usepackage{amssymb,amsmath,amsthm,latexsym} 
\usepackage{subfigure}
\usepackage[pdftex]{color,graphicx}
\usepackage[all]{xy}
\usepackage{rotating}
\usepackage{mathtools}

\usepackage{hyperref}

\xyoption{all}
\xyoption{line}
\xyoption{knot}
\xyoption{color}

\newcommand{\hfkhat}[0]{\widehat{HFK}}
\newcommand{\hfkmin}[0]{HFK^-}

\newcommand{\noi}[0]{\noindent}
\newcommand{\isom}[0]{\cong}

\newcommand{\cA}[0]{\mathcal{A}}
\newcommand{\cB}[0]{\mathcal{B}}

\newcommand{\cE}[0]{\mathcal{E}}
\newcommand{\cG}[0]{\mathcal{G}}

\newcommand{\cR}[0]{\mathcal{R}}

\newcommand{\itk}[0]{\mathbb{F}}

\newcommand{\kpoly}[0]{\itk[\underline{x}]}
\newcommand{\kpolylong}[0]{\itk[x_0,\ldots,x_n]}

\newcommand{\weight}[0]{\textbf{\textrm{w}}}
\newcommand{\Weight}[0]{\textbf{\textrm{W}}}
\newcommand{\out}[0]{\mathrm{out}}
\newcommand{\into}[0]{\mathrm{in}}

\newcommand{\ztau}[0]{z_{\tau}}
\newcommand{\zbeta}[0]{z_{\beta}}
\newcommand{\overf}{\overline{f}}
\newcommand{\overg}{\overline{g}}

\newcommand{\nutopk}[0]{\nu\ztau\ksup{k+1}-\ztau\ksup{k+1}}

\newcommand{\anyzeta}[0]{\zeta\ksup{k+1}}

\newcommand{\lt}[1]{\mathrm{LT}\!\left(#1\right)}
\newcommand{\trt}[1]{\mathrm{TT}\!\left(#1\right)}
\newcommand{\lm}[1]{\mathrm{LM}\!\left(#1\right)}
\newcommand{\lc}[1]{\mathrm{LC}\!\left(#1\right)}
\newcommand{\tc}[1]{\mathrm{TC}\!\left(#1\right)}
\newcommand{\trm}[1]{\mathrm{TM}\!\left(#1\right)}
\newcommand{\lcm}[1]{\mathrm{lcm}\!\left(#1\right)}
\newcommand{\tor}[0]{\mathrm{Tor}}

\newcommand{\xsub}[2]{x_{#1,#2}}
\newcommand{\zsub}[2]{z_{#1,#2}}
\newcommand{\ksup}[1]{^{(#1)}}
\newcommand{\gm}[0]{\Gamma}
\newcommand{\dl}[0]{\Delta}
\newcommand{\gd}[1]{\Gamma #1 \Delta}
\newcommand{\sm}[0]{\setminus}

\theoremstyle{definition}\newtheorem{dfn}{Definition}[section]
\newtheorem{eg}{Example}[section]
\newtheorem{algorithm}{Algorithm}[section]
\theoremstyle{plain}\newtheorem{thm}[]{Theorem}[section]
\newtheorem{cor}[thm]{Corollary}

\newtheorem{prop}{Proposition}[section]
\newtheorem{lemma}[prop]{Lemma}
\newtheorem{obs}[prop]{Observation}
\newtheorem*{thm*}{Theorem}
\newtheorem*{cor*}{Corollary}
\newtheorem*{prop*}{Proposition}
\newtheorem*{conj*}{Conjecture}
\numberwithin{equation}{section}

\begin{document}
\title[Framed graphs and the non-local ideal]{Framed graphs and the non-local ideal in the knot Floer cube of resolutions}
\author{Allison Gilmore}
\thanks{The author was partially supported by NSF grant DMS-1103801.}

\begin{abstract}
This article addresses the two significant aspects of Ozsv\'ath and Szab\'o's knot Floer cube of resolutions that differentiate it from Khovanov and Rozansky's HOMFLY-PT chain complex: (1) the use of twisted coefficients and (2) the appearance of a mysterious non-local ideal. Our goal is to facilitate progress on Rasmussen's conjecture that a spectral sequence relates the two knot homologies. We replace the language of twisted coefficients with the more quantum topological language of framings on trivalent graphs. We define a homology theory for framed trivalent graphs with boundary that -- for a particular non-blackboard framing -- specializes to the homology of singular knots underlying the knot Floer cube of resolutions. For blackboard framed graphs, our theory conjecturally recovers the graph homology underlying the HOMFLY-PT chain complex. We explain the appearance of the non-local ideal by expressing it as an ideal quotient of an ideal that appears in both the HOMFLY-PT and knot Floer cubes of resolutions. This result is a corollary of our main theorem, which is that closing a strand in a braid graph corresponds to taking an ideal quotient of its non-local ideal. The proof is a Gr\"obner basis argument that connects the combinatorics of the non-local ideal to those of Buchberger's Algorithm.
\end{abstract}

\maketitle

\section{Introduction}
\label{intro}


This article aims to elucidate the key differences between Ozsv\'ath and Szab\'o's cube of resolutions chain complex for knot Floer homology~\cite{ozsszcube} and the cube of resolutions chain complex underlying Khovanov and Rozansky's HOMFLY-PT homology~\cite{kr2,rasmussenonkr}. Comparing the constructions is especially interesting in light of the conjecture~\cite{dgr,rasmussenonkr} that there should be a spectral sequence from HOMFLY-PT homology to knot Floer homology. In both constructions, a knot in $S^3$ is studied by considering the collection of graphs $G_I$ for $I\in\left\{0,1\right\}^n$ obtained by replacing each crossing in an $n$-crossing braid diagram with its oriented resolution or with a thick edge, as in Figure~\ref{resolutions}. The graphs are planar and trivalent, with one thick and two thin edges incident to each vertex. They are equivalent to singular knots by exchanging thick edges for 4-valent vertices as in Figure~\ref{4valwideedgeexchange}.

\begin{figure}[t]
\begin{center}
$$\input{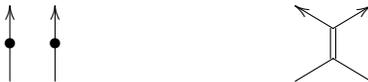}$$
\caption{A collection of graphs is obtained from a braid diagram for a knot by replacing each crossing with either its oriented resolution (left) or with a thick edge (right).}
\label{resolutions}
\end{center}
\end{figure}

\begin{figure}
\begin{center}
$$\input{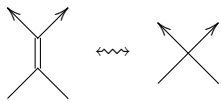}$$
\caption{Graphs, as defined in Section~\ref{framedgraphs}, correspond to singularized links via the exchange above.}
\label{4valwideedgeexchange}
\end{center}
\end{figure}

In the cube of resolutions for knot Floer homology, one associates a graded algebra $\cB_{\textrm{HFK}}(G_I)$ to each graph and assembles these into a bigraded chain complex whose homology is the knot Floer homology of the original knot. In HOMFLY-PT homology, one associates a bigraded chain complex to each $G_I$, then assembles these into a triply-graded chain complex. The triply-graded complex has one differential coming from the complexes associated to the $G_I$, but it is also given a new differential. Taking homology with respect to each of these in turn produces the HOMFLY-PT homology of the knot. Let $\cB_{\textrm{KR}}(G_I)$ denote the homology of the chain complex associated to $G_I$. 

The process of assembling a final chain complex from the $\cB_{\textrm{HFK}}(G_I)$ or the $\cB_{\textrm{KR}}(G_I)$ is quite similar; it is a standard cube of resolutions construction. We focus here on the differences between $\cB_{\textrm{HFK}}$ and $\cB_{\textrm{KR}}$, which we call the knot Floer graph homology and the HOMFLY-PT graph homology, respectively. 

Both the knot Floer and HOMFLY-PT graph homologies are built from certain ideals in polynomial rings. The polynomial rings are edge rings: they have indeterminates corresponding to thin edges of the graph. Generating sets for the ideals can be read off directly from the graph. Ozsv\'ath and Szab\'o observe that the ideals used in the two constructions are remarkably similar~\cite{ozsszcube}, but a precise relationship between the constructions has not been previously described. Our goal will be to make the comparison precise, with the intention of facilitating progress on the spectral sequence conjecture. 

We address two major differences between the knot Floer and HOMFLY-PT graph homologies.
\begin{enumerate}
\item \textbf{Twisted coefficients.} The knot Floer edge ring is defined over $\mathbb{Z}[t^{-1},t]]$, the ring of Laurent series in $t$, while the HOMFLY-PT edge ring is defined over $\mathbb{Q}$~\cite{kr2,rasmussenonkr} or $\mathbb{Z}$~\cite{krasner}. The variable $t$ appears in the definition of the knot Floer ideals as well because the knot Floer graph homology is in fact the singular knot Floer homology~\cite{ozsszstipsing} of the graph in $S^3$, computed with a particular choice of twisted coefficients.

\item \textbf{The non-local ideal.} The HOMFLY-PT graph homology is built from two ideals, $L(G)$ and $Q(G)$, both of which are specified entirely by local information (individual thick edges and their incident thin edges) in the graph. The knot Floer graph homology uses (twisted analogues of) these ideals, but also a non-local ideal $N(G)$, which cannot in general be specified by only local data from the graph.
\end{enumerate}

We address the issue of twisted coefficients by recasting it in terms of framed graphs. For a framed, planar, trivalent graph $G$, possibly with boundary, we define an edge ring $\cE(G)$, which is itself a quotient of a polynomial ring by an ideal $F(G)$ derived from the framing. We define mild generalizations of the ideals $L(G)$, $Q(G)$, and $N(G)$ mentioned above, and a graph homology
\[\cB\!\left(G\right)=\tor_\ast\!\left(\frac{\cE\!\left(G\right)}{L\!\left(G\right)},\frac{\cE\!\left(G\right)}{N\!\left(G\right)}\right)\otimes\Lambda^\ast V_G,\]
where $V_G$ is the free $\cE(G)$-module spanned by certain connected components of $G$.

If $G$ is a closed braid graph (i.e.,~obtained by replacing the crossings in a closed braid diagram with thick edges) with its outermost strand cut, then we recover the knot Floer and (conjecturally) HOMFLY-PT graph homologies by imposing certain framings. For a particular non-blackboard framing {\tt e}, we have $\cB_{\textrm{HFK}}(G)\isom\cB\!\left(G^{\tt e}\right)$. For a different non-blackboard framing, we recover the variant on the knot Floer graph homology considered in~\cite{reidmoves}.

Letting {\tt b} denote the blackboard framing, one may write the HOMFLY-PT graph homology as \[\cB_{\textrm{KR}}(G)\isom\tor_\ast\!\left(\frac{\cE(G^{\tt b})}{L(G^{\tt b})},\frac{\cE(G^{\tt b})}{Q(G^{\tt b})}\right)\otimes\Lambda^\ast V_G,\] but one may also re-state Conjecture 1.3 of \cite{manolescucube} as 
\[\tor_\ast\!\left(\frac{\cE(G^{\tt b})}{L(G^{\tt b})},\frac{\cE(G^{\tt b})}{Q(G^{\tt b})}\right)\isom\tor_\ast\!\left(\frac{\cE(G^{\tt b})}{L(G^{\tt b})},\frac{\cE(G^{\tt b})}{N(G^{\tt b})}\right).\]
If that conjecture holds, it would follow immediately that $\cB_{\textrm{KR}}(G)\isom\cB(G^{\tt b})$. That is, our $\cB$ would specialize to the HOMFLY-PT graph homology for the blackboard framing.

The approach via framed graphs is a modest generalization of existing graph homologies, but it situates these graph homologies in the context of quantum topology. In that setting, invariants of framed graphs are a natural extension of invariants of knots, and a typical stop along the way to invariants of 3-manifolds. It should be possible to extend $\cB$ to an invariant of knotted framed trivalent graphs via a cube of resolutions chain complex. It would be interesting to relate the resulting invariant to Viro's \cite{viroquantumrel} quantum relative of the Alexander polynomial, which draws on the representation theory of the quantum supergroup $\mathfrak{gl}(1\vert 1)$ to extend the multivariable Alexander polynomial to knotted framed trivalent graphs. Understanding such a relationship could help fill gaps in both the categorified and decategorified settings. On the categorified side, one might hope to extend knot Floer homology to tangles without appealing to  bordered sutured theory~\cite{zarev}. On the decategorified side, Heegaard Floer homology might suggest how to upgrade Viro's invariant of framed graphs to a $\mathfrak{gl}(1\vert 1)$ invariant of closed 3-manifolds. 

These advertisements for the framed graphs approach aside, our main result concerns the non-local ideal $N(G)$. It will be clear from the definitions that $Q(G)\subseteq N(G)$ for any graph $G$. Furthermore, the non-local ideal $N(G)$ coincides with the local ideal $Q(G)$ when $G$ is a braid graph with none of its strands closed; that is, when $G$ can be obtained from a braid with none of its strands closed by replacing crossings with thick edges as in Figure~\ref{resolutions} (see~\cite[Proposition~3.1.1]{gilmorethesis} and~\cite[Proposition~5.4]{manolescucube}, or implicitly \cite[Lemma~3.12]{ozsszcube} and \cite[Proposition~3.1]{reidmoves}). It is only as we close strands of the braid graph that we begin to see examples in which $Q(G)\subsetneq N(G)$. Therefore, we study the partially closed braids $G, G\ksup{1},\ldots,G\ksup{b-1}$ obtained by closing one strand at a time, as in Figure~\ref{intermedgraphs}. We allow any framing on $G$, and assume that the framing on $G\ksup{i}$ is inherited from that on $G$. Throughout, we consider $G\ksup{b-1}$ to be the closure of $G$, even though its outermost strand is still open. This is consistent with the quantum topology approach to the Alexander polynomial (e.g.~in~\cite{viroquantumrel}) and with the use of basepoints in~\cite{ozsszcube,reidmoves,manolescucube}. 

We prove that closing a braid strand corresponds to taking an ideal quotient of the non-local ideal by the edge variable associated to the strand being closed. See Section~\ref{mainresult} for full details of the notation.
\begin{thm}
\label{idealqthmintro}
Let $G$ be a braid graph with no strands closed and $G\ksup{k}$ denote the diagram obtained by closing the right-most $k$ strands of $G$. Let $\pi_k:\cE\!\left(G\ksup{k}\right)\to\cE\!\left(G\ksup{k+1}\right)$ denote the projection of edge rings corresponding to closing the $(k+1)^{\text{st}}$ strand of $G\ksup{k}$. Let $\ztau\ksup{k+1}$ denote the edge ring variable corresponding to the top boundary edge of the $(k+1)^{\text{st}}$ strand of $G\ksup{k}$. Then for $0\leq k\leq b-2$, the equality
 $$\pi_k\!\left(N\!\left(G\ksup{k}\right)\right) : \left(\ztau\ksup{k+1}\right) = N\!\left(G\ksup{k+1}\right),$$  holds in $\cE\!\left(G\ksup{k+1}\right)$.
\end{thm}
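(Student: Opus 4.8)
The plan is to establish the two inclusions separately: $N(G\ksup{k+1}) \subseteq \pi_k(N(G\ksup{k})) : (\ztau\ksup{k+1})$ should be a direct computation with generators, while the reverse inclusion is the Gr\"obner basis argument promised by the abstract. Throughout I fix the explicit generating sets for $N(G\ksup{k})$ and $N(G\ksup{k+1})$ that can be read off the graphs as in Section~\ref{mainresult}, and I sort the generators of $\pi_k(N(G\ksup{k}))$ according to whether they involve the variable $\ztau\ksup{k+1}$ of the top boundary edge of the strand being closed. It helps to note at the outset that the non-local ideal, the projection $\pi_k$, and the element $\ztau\ksup{k+1}$ are all ``supported near'' the $(k+1)^{\text{st}}$ strand: generators of $N$ attached to thick edges away from that strand are untouched by $\pi_k$, and -- as I explain below -- can be kept inert during the Gr\"obner basis computation. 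This localizes the problem to a neighbourhood of the strand being closed.

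For the forward inclusion it suffices to check that $\ztau\ksup{k+1}\cdot n \in \pi_k(N(G\ksup{k}))$ for each generator $n$ of $N(G\ksup{k+1})$. The generators of $N(G\ksup{k+1})$ inherited from $Q(G\ksup{k+1})$, or from the non-local contribution of a strand other than the $(k+1)^{\text{st}}$, lift verbatim to generators of $N(G\ksup{k})$, so the only real work is with the finitely many ``new'' relations created by closing the $(k+1)^{\text{st}}$ strand -- such as the monodromy relation $\nutopk$ around the new loop and relations of the forms $\typeb$, $\typec$, $\typed$. For each such $n$ I would build $\ztau\ksup{k+1}\cdot n$ explicitly, composing the thick-edge relations encountered along the closed strand with the boundary relation $\nubott$ in its $(k+1)$-superscripted form, chaining them once around the loop; the factor $\ztau\ksup{k+1}$ is precisely what is needed to clear denominators as one passes each thick edge. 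This is bookkeeping with the generator types, not a genuine difficulty.

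The reverse inclusion is the heart of the proof. Fix a monomial order on $\cE(G\ksup{k})$ adapted to the braid -- a block order isolating $\ztau\ksup{k+1}$ together with the edge variables running along the strand being closed -- and run Buchberger's Algorithm on the chosen generating set of $\pi_k(N(G\ksup{k}))$. The claim to prove is that the resulting reduced Gr\"obner basis $\cG$ has a rigid shape: every element of $\cG$ is either a Gr\"obner basis element of $N(G\ksup{k+1})$ not involving $\ztau\ksup{k+1}$, or else is $\ztau\ksup{k+1}$ times such an element, with no further ``surprise'' elements ever produced. Granting this, write $\pi_k(N(G\ksup{k})) = J + \ztau\ksup{k+1}\cdot M$, where $J$ is generated by the $\ztau\ksup{k+1}$-free members of $\cG$ and $M$ by the remaining members divided by $\ztau\ksup{k+1}$; the shape of $\cG$ forces $J + M = N(G\ksup{k+1})$ and forces $\ztau\ksup{k+1}$ to be a nonzerodivisor on $\cE(G\ksup{k+1})/J$ (the leading monomials of a Gr\"obner basis of $J$ avoid $\ztau\ksup{k+1}$ altogether). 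An elementary argument then finishes: if $h\,\ztau\ksup{k+1} = j + \ztau\ksup{k+1}m$ with $j\in J$, $m\in M$, then $\ztau\ksup{k+1}(h-m)=j\in J$, so $h-m\in J$ and $h\in J+M=N(G\ksup{k+1})$. This also re-proves the forward inclusion, so the two halves could be folded into one computation.

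The main obstacle is exactly the rigid-shape claim -- that Buchberger's Algorithm on $\pi_k(N(G\ksup{k}))$ terminates without ever spawning an element that would enlarge the colon ideal beyond $N(G\ksup{k+1})$. This is where the combinatorics of the non-local ideal (the graph-theoretic provenance of the generators and their types) has to be married to the combinatorics of Buchberger's Algorithm (which $S$-pairs are genuinely relevant). Concretely, I would classify the $S$-polynomials by the pair of generator types involved, discard coprime-leading-monomial pairs by Buchberger's first criterion and redundant pairs by the chain criterion, and then exhibit, type by type, an explicit standard representation of each surviving $S$-polynomial over $\cG$. The support-near-the-strand observation keeps this case analysis finite and local: $S$-pairs between generators attached to thick edges disjoint from the closed strand reduce to zero for reasons already present in $G$ itself, where $Q=N$, so only the interaction of the ``new'' relations with their immediate neighbours needs to be worked out by hand. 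The argument is uniform in $k$, so no separate induction on $k$ is required; one might still hope for a more conceptual proof -- say via a flatness statement for the family of edge rings as strands close -- but the intricacy of $N(G)$ seems to force the Gr\"obner basis route.
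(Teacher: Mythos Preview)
Your overall instinct---prove the theorem by controlling a Gr\"obner basis and then reading off the colon ideal---matches the paper, but the specific structural claim you stake the reverse inclusion on is false, and without it your argument does not go through. You assert that the (reduced) Gr\"obner basis $\cG$ of $\pi_k(N_k)$ consists only of elements that either avoid $\ztau\ksup{k+1}$ entirely or are $\ztau\ksup{k+1}$ times such an element. Already in the paper's smallest example (the two--thick-edge graph of Section~\ref{examplesection}, with $\ztau\ksup{2}=x_1$) this fails: the reduced Gr\"obner basis of $\pi_1(N_1)$ in the prescribed order is $\{x_1-x_2,\;x_2x_4-x_2x_5\}$, and $x_1-x_2$ fits neither of your two bins. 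More generally, whenever $\ztau\ksup{k+1}$ is incident to a subset $\gm$ but not internal to it, the generator $g_\gm\ksup{k}=g_\gm\ksup{k+1}$ has $\ztau\ksup{k+1}$ appearing in exactly one term, so it can be neither $\ztau\ksup{k+1}$-free nor a multiple of $\ztau\ksup{k+1}$. Your decomposition $\pi_k(N_k)=J+\ztau\ksup{k+1}M$ therefore does not exist in the form you describe, and the nonzerodivisor argument built on it collapses. A related issue is your description of the generators of $N$: they are indexed by arbitrary subsets $\gm$ of thick edges and bivalent vertices, not by ``types'' attached to a single strand, and the symbols $\nutopk$, $\typeb$, $\typec$, $\typed$ you invoke are not generators of $N$ in this paper's sense. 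The localization heuristic (``only generators near the closed strand matter'') is also too optimistic: a subset $\gm$ can touch the $(k+1)^{\text{st}}$ strand while sprawling across the whole braid, and its generator interacts nontrivially with generators for subsets far away.

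What the paper actually does differs from your plan in two essential ways. First, it does not compute a Gr\"obner basis of $\pi_k(N_k)$ directly; it uses the standard elimination trick, running Buchberger on $\{\nu g_\gm\ksup{k}\}\cup\{\nu\ztau\ksup{k+1}-\ztau\ksup{k+1}\}$ in $\cE_{k+1}[\nu]$ with $\nu$ largest, so that a basis for the intersection $\pi_k(N_k)\cap(\ztau\ksup{k+1})$---and hence for the colon ideal after dividing by $\ztau\ksup{k+1}$---falls out by discarding everything that still contains $\nu$. Second, the mechanism that keeps the algorithm under control is not a local/nonlocal dichotomy but the set-theoretic identity behind $S$-polynomials of subset generators: $S(g_\gm\ksup{k+1},g_\dl\ksup{k+1})$ is, up to a monomial factor, a product of terms of $g_\Lambda\ksup{k+1}$ for $\Lambda\in\{\gm\cap\dl,\;\gm\cup\dl,\;\gm\setminus(\gm\cap\dl),\;\dl\setminus(\gm\cap\dl)\}$ (Proposition~\ref{ksubsetoverlap} and Lemma~\ref{subsetoverlap}). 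That lemma is the engine of the whole proof and has no analogue in your outline. The forward inclusion, incidentally, is a one-line consequence of $g_\gm\ksup{k}=\anyzeta_\gm\,g_\gm\ksup{k+1}$ with $\anyzeta_\gm\in\{1,\ztau\ksup{k+1}\}$; no chaining around loops is needed.
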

As a corollary, we may express the non-local ideal of a braid graph's closure in terms of a local ideal of the underlying braid graph.
\begin{cor}
\label{idealqcorintro}
With notation as in Theorem~\ref{idealqthmintro}, the equality
$$\pi_{b-2}\circ\cdots\circ\pi_0\!\left(Q\!\left(G\right)\right) : \left(\ztau\ksup{1}\cdots\ztau\ksup{b-1}\right)=N\!\left(G\ksup{b-1}\right)$$ holds in $\cE\!\left(G\ksup{b-1}\right)$.
\end{cor}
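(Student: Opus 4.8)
The plan is to derive Corollary~\ref{idealqcorintro} from Theorem~\ref{idealqthmintro} by closing one strand at a time, supplementing the iteration with two ingredients: the base case $Q(G) = N(G)$ for a braid graph with no strands closed (established in \cite[Proposition~3.1.1]{gilmorethesis} and \cite[Proposition~5.4]{manolescucube}), and the fact that the edge variables on already-closed strands are non-zero-divisors modulo the non-local ideal of the (partially) closed braid. Write $\Pi_k = \pi_{k-1}\circ\cdots\circ\pi_0 : \cE(G) \to \cE\!\left(G\ksup k\right)$, with $\Pi_0$ the identity, so that $\Pi_{b-1}(Q(G))$ is exactly the ideal appearing in the corollary. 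Since closing the $(k+1)^{\text{st}}$ strand does not disturb the edges on strands $1,\dots,k$, each $\pi_k$ fixes $\ztau\ksup j$ for $j \le k$; I use this throughout. Iterating the trivial inclusion $\pi_k\!\left(N\!\left(G\ksup k\right)\right) \subseteq \pi_k\!\left(N\!\left(G\ksup k\right)\right) : \left(\ztau\ksup{k+1}\right) = N\!\left(G\ksup{k+1}\right)$ from Theorem~\ref{idealqthmintro}, together with the base case, gives $\Pi_k(Q(G)) \subseteq N\!\left(G\ksup k\right)$ for every $k$.

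For the inclusion $N\!\left(G\ksup{b-1}\right) \subseteq \Pi_{b-1}(Q(G)) : \left(\ztau\ksup 1\cdots\ztau\ksup{b-1}\right)$, I would run a descending telescope. Theorem~\ref{idealqthmintro} with $k = b-2$ gives $\ztau\ksup{b-1} \cdot N\!\left(G\ksup{b-1}\right) \subseteq \pi_{b-2}\!\left(N\!\left(G\ksup{b-2}\right)\right)$. Multiplying by $\ztau\ksup{b-2}$ and moving it inside $\pi_{b-2}$ (valid because $\pi_{b-2}$ fixes $\ztau\ksup{b-2}$) lands in $\pi_{b-2}\!\left(\ztau\ksup{b-2} \cdot N\!\left(G\ksup{b-2}\right)\right)$, which by Theorem~\ref{idealqthmintro} with $k = b-3$ is contained in $\pi_{b-2}\!\left(\pi_{b-3}\!\left(N\!\left(G\ksup{b-3}\right)\right)\right)$. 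Repeating down to $k = 0$ produces $\ztau\ksup 1\cdots\ztau\ksup{b-1} \cdot N\!\left(G\ksup{b-1}\right) \subseteq \Pi_{b-1}(N(G)) = \Pi_{b-1}(Q(G))$, which is the desired containment.

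For the reverse inclusion, the first paragraph gives $\Pi_{b-1}(Q(G)) = \pi_{b-2}(\Pi_{b-2}(Q(G))) \subseteq \pi_{b-2}\!\left(N\!\left(G\ksup{b-2}\right)\right)$. Applying $:\left(\ztau\ksup 1\cdots\ztau\ksup{b-1}\right)$ to both sides, then the elementary identity $I : (fg) = (I : (f)) : (g)$ with $f = \ztau\ksup{b-1}$, and then Theorem~\ref{idealqthmintro} with $k = b-2$, yields
\[
\Pi_{b-1}(Q(G)) : \left(\ztau\ksup 1\cdots\ztau\ksup{b-1}\right) \ \subseteq\ N\!\left(G\ksup{b-1}\right) : \left(\ztau\ksup 1\cdots\ztau\ksup{b-2}\right).
\]
Thus it remains only to show $N\!\left(G\ksup{b-1}\right) : \left(\ztau\ksup 1\cdots\ztau\ksup{b-2}\right) = N\!\left(G\ksup{b-1}\right)$, i.e.\ that the product of the edge variables on the previously-closed strands is a non-zero-divisor in $\cE\!\left(G\ksup{b-1}\right)/N\!\left(G\ksup{b-1}\right)$.

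I expect that last step to be the main obstacle, as it is the only point where something non-formal about $N$ is genuinely needed: the naive claim that $\pi_k$ commutes with the colon operation fails already for a projection of polynomial rings that kills one variable followed by a colon by another, so this cannot be avoided by pure ring theory. I would try to extract the non-zero-divisor property from the Gr\"obner-basis picture behind Theorem~\ref{idealqthmintro} --- choosing a monomial order under which no element of a Gr\"obner basis of $N\!\left(G\ksup{b-1}\right)$ has leading term divisible by any $\ztau\ksup j$ with $j \le b-2$ --- or, more geometrically, by arguing that the already-closed strands contribute only loop components whose edge variables never appear in the non-local relations. With that in hand, the two inclusions above combine to give $\pi_{b-2}\circ\cdots\circ\pi_0(Q(G)) : \left(\ztau\ksup 1\cdots\ztau\ksup{b-1}\right) = N\!\left(G\ksup{b-1}\right)$ in $\cE\!\left(G\ksup{b-1}\right)$.
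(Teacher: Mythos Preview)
Your approach is considerably more careful than the paper's own proof, which simply cites $Q(G_\sigma)=N(G_\sigma)$ together with $I:(xy)=(I:(x)):(y)$ and declares the result immediate. You are right to be suspicious: iterating Theorem~\ref{idealqthmintro} yields $N\!\left(G\ksup{k+1}\right)=\pi_k\!\left(N\!\left(G\ksup{k}\right)\right):\left(\ztau\ksup{k+1}\right)$ at each stage, and collapsing this to the single colon $\pi(Q(G)):\left(\ztau\ksup{1}\cdots\ztau\ksup{b-1}\right)$ requires commuting the later projections past the earlier colons. That commutation can fail even when the colon variable is untouched by the projection (take $R=\itk[x,u,v]$, kernel $(u-v)$, $I=(x^2,\,u-v+x)$), so the paper's two lines are elliptical on precisely the point you isolate.

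Your telescoping argument for one inclusion is correct, as is your reduction of the other to $N\!\left(G\ksup{b-1}\right):\left(\ztau\ksup{1}\cdots\ztau\ksup{b-2}\right)=N\!\left(G\ksup{b-1}\right)$. That claim is the genuine remaining gap. Your geometric heuristic is off---closure edges do appear in $g_\gm\ksup{b-1}$ whenever they run between $\gm$ and its complement---and there is no evident order making every leading monomial free of $\ztau\ksup{j}$. But the Gr\"obner route works if you instead run Algorithm~\ref{iqalg} directly: Proposition~\ref{ksubsetoverlap} shows $\{g_\gm\ksup{b-1}\}$ is already a Gr\"obner basis for $N\!\left(G\ksup{b-1}\right)$, and computing $N\!\left(G\ksup{b-1}\right):\left(\ztau\ksup{j}\right)$ with $\ztau\ksup{j}$ placed first is the degenerate case of Lemma~\ref{buchend} in which every $g_\gm\ksup{k}$ coincides with its $g_\gm\ksup{k+1}$ (strand $j$ being already closed in $G\ksup{b-1}$), so the algorithm returns the same generators. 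This gives the non-zero-divisor statement for each $j\le b-2$ and completes your argument.
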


The reason for the appearance of the ideal quotient in relation to braid closures remains mysterious. There is a tempting analogy to Hochschild homology, which is the closure operation in Khovanov's construction of HOMFLY-PT homology via Soergel bimodules~\cite{khovHH}. Soergel bimodules categorify the Hecke algebra and Hochschild homology categorifies Ocneanu's trace on the Hecke algebra, so Khovanov's whole construction has a clean decategorification. One might hope for a similar story involving the ideal quotient and the Alexander polynomial.

\begin{figure}[tb]
\begin{center}
\input{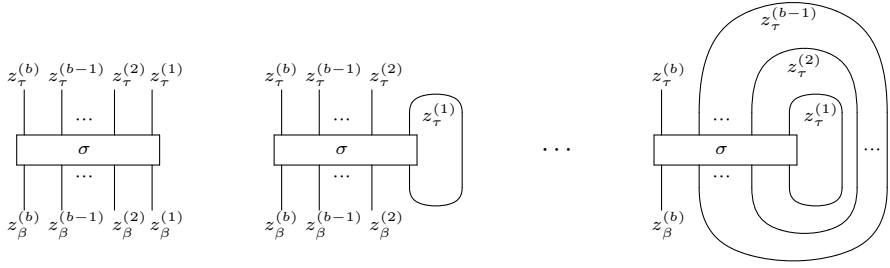}
\caption{From left to right: $G_\sigma\ksup{0}=G_\sigma$, the partial closure $G_\sigma\ksup{1}$, and the full closure $G_\sigma\ksup{b-1}=G_{\widehat{\sigma}}$.}
\label{intermedgraphs}
\end{center}
\end{figure}

With the ideal quotient result and interpretation via framed graphs in hand, we may describe the status of the HOMFLY-PT to knot Floer spectral sequence conjecture~\cite{dgr,rasmussenonkr} as follows.\footnote{All of the following results have parallels involving $\hfkhat$ and the reduced HOMFLY-PT homology.} Let $K$ be a knot in $S^3$. Let $\mathcal{K}$ be an $n$-crossing braid diagram for $K$ with outermost strand cut. Let $G_I$ for $I\in\left\{0,1\right\}^n$ be the collection of planar trivalent graphs that can be obtained by replacing each crossing of $\mathcal{K}$ with either a thick edge or with the oriented smoothing, as in Figure~\ref{resolutions}. 

Recall that {\tt e} denotes the framing for which $\cB$ specializes to the knot Floer graph homology. Ozsv\'ath and Szab\'o constructed the original cube of resolutions for knot Floer homology from the collection of $\cB\!\left(G_I^{\tt e}\right)$ for $I\in\left\{0,1\right\}^n$, which arose for them as singular knot Floer homology \cite{ozsszstipsing} with twisted coefficients. They showed that their cube of resolutions complex is the $E_1$ page of a spectral sequence to $\hfkmin(K)$ that collapses at the $E_2$ page~\cite[Theorem~1.1, Section~5]{ozsszcube}. Manolescu~\cite{manolescucube} studied the untwisted version of their construction. He described appropriate differentials and gradings from which to assemble a cube of resolutions chain complex from the $\cB\!\left(G_I^{\tt b}\right)$. He also identified that complex as the $E_1$ page of a spectral sequence to $\hfkmin(K)$~\cite[Theorem~1.1]{manolescucube}. Using yet another framing, for which $\cB$ also specializes to the knot Floer graph homology, the author has described another cube of resolutions chain complex~\cite{reidmoves}.  Like the Ozsv\'ath-Szab\'o complex, it is the $E_1$ page of a spectral sequence to $\hfkmin(K)$ that collapses at the $E_2$ page (see the proof of \cite[Proposition~9.1]{reidmoves}).

While both of the non-blackboard-framed spectral sequences mentioned above collapse at the $E_2$ page, Manolescu's blackboard-framed spectral sequence does not. In fact, he conjectures that it is exactly the desired spectral sequence from HOMFLY-PT homology to knot Floer homology. More precisely, and translating to our language, Manolescu conjectures that $\cB\!\left(G_I^{\tt b}\right)\isom\cB_{\textrm{KR}}\!\left(G_I\right)$, which would imply that the $E_2$ page of the blackboard-framed spectral sequence was the middle HOMFLY-PT homology of $K$ \cite[Conjecture~1.3]{manolescucube}. 

Corollary~\ref{idealqcorintro} allows us to rephrase Manolescu's conjecture as 
\[\tor_\ast\!\left(\frac{\cE\!\left(G_I^{\tt b}\right)}{L\!\left(G_I^{\tt b}\right)},\frac{\cE\!\left(G_I^{\tt b}\right)}{Q\!\left(G_I^{\tt b}\right):\left(\ztau\ksup{1}\cdots \ztau\ksup{b-1}\right)}\right)\isom \tor_\ast\!\left(\frac{\cE\!\left(G_I^{\tt b}\right)}{L\!\left(G_I^{\tt b}\right)},\frac{\cE\!\left(G_I^{\tt b}\right)}{Q\!\left(G_I^{\tt b}\right)}\right).\]
Theorem~\ref{idealqthmintro} suggests an inductive approach to the proof: close one strand of a braid diagram at a time and study how the corresponding ideal quotient changes the result of applying $\tor_\ast\!\left(\frac{\cE\!\left(G_I^{\tt b}\right)}{L\!\left(G_I^{\tt b}\right)},-\right)$. Theorem~\ref{idealqthmintro} and Corollary~\ref{idealqcorintro} also provide a new map to employ: the multiplication map $\frac{\cE\!\left(G_I\right)}{N\!\left(G_I\right)}\xrightarrow{\ztau\ksup{1}\cdots\ztau\ksup{b-1}}\frac{\cE\!\left(G_I\right)}{Q\!\left(G_I\right)}$. The multiplication map fits into a short exact sequence \[0\to\frac{\cE\!\left(G_I\right)}{N\!\left(G_I\right)}\xrightarrow{\ztau\ksup{1}\cdots\ztau\ksup{b-1}}\frac{\cE\!\left(G_I\right)}{Q\!\left(G_I\right)}\to\frac{\cE\!\left(G_I\right)}{Q\!\left(G_I\right)+\left(\ztau\ksup{1}\cdots\ztau\ksup{b-1}\right)}\to 0,\] which induces a long exact sequence when $\tor_\ast\!\left(\frac{\cE\!\left(G_I\right)}{L\!\left(G_I\right)},-\right)$ is applied. The multiplication map does not have the correct grading to induce Manolescu's conjectured isomorphism for all $\tor_i$, but it does have the appropriate grading to induce the isomorphism in the top degree, i.e.~when $i=b-1$ \cite[Conjecture 5.2]{manolescucube}. 

We expect that there are cube of resolutions chain complexes and spectral sequences analogous to those described above for many other compatible choices of framings on the set of $G_I$ obtained from a given knot diagram. Any choice of framings that corresponds to an admissible twisting of the singular knot Floer homology (see~\cite[Lemma~2.1]{manolescucube},\cite[Section~2.1]{ozsszcube}) should do, and that encompasses any non-negative framing. We expect all such spectral sequences to converge to knot Floer homology, and to collapse if sufficiently far from blackboard-framed. In particular, we expect that such a spectral sequence would collapse if the compatible choice of framings had the property that every closed component of every $G_I$ had non-zero total framing. It would be interesting to know under what conditions the $E_1$ and/or $E_2$ pages of such spectral sequences are knot invariants, and whether there is any relationship to HOMFLY-PT homology outside the blackboard-framed case.

Aside from the conjectured spectral sequence, it would be interesting to study $\cB(G)$ (or a suitable generalization to knotted framed graphs) as an invariant in its own right. For example, it would be interesting to know for what framings $\cB$ satisfies (perhaps modified) categorified MOY relations, as it does for {\tt b} and {\tt e}~\cite{kr2,reidmoves}.   There has also been little work done on applications of knot Floer homology to the study of singular knots or spatial graphs. As a starting point, one might look for a relationship between $\cB$ and the sutured Floer homology of a graph's complement in $S^3$. 

The proof of Theorem~\ref{idealqthmintro} is a computational commutative algebra argument. We use a Gr\"obner basis technique (Buchberger's Algorithm) to construct a generating set for the appropriate ideal quotients from the defining generating set of $N\!\left(G\ksup{k}\right)$. The result is miraculously the same as the defining generating set for $N\!\left(G\ksup{k+1}\right)$. The computational approach makes for some rather involved arguments, but ultimately succeeds because of a match between the combinatorics of Buchberger's Algorithm and those of the ideals we associate to framed graphs. We are optimistic that Gr\"obner basis techniques may prove useful for the spectral sequence conjecture or in other efforts to study $\cB$.

The paper is organized as follows. Section~\ref{dfns} makes precise the concepts and notation referenced so far: framed graphs, the framing ideal, the local and non-local ideals, edge rings, and the graph homology $\cB$. It also discusses the relation of $\cB$ to the HOMFLY-PT and knot Floer graph homologies, and computes $\cB$ in two simple cases. Finally, it establishes the notation used to state Theorem~\ref{idealqthmintro}. Section~\ref{gbbackground} is a primer on Gr\"obner basis techniques and Buchberger's Algorithm. Since the proof of Theorem~\ref{idealqthmintro} is rather technical, Section~\ref{outlinesec} gives an overview and Section~\ref{examplesection} an example illustrating the arguments to come. Sections~\ref{outlinesec} and~\ref{examplesection} also highlight the reasons that Gr\"obner basis techniques are well suited to the combinatorics of our problem. Sections~\ref{bbrd1} and~\ref{bbrd2} carry out the proof in detail for the blackboard framed case, where it is at least somewhat less notationally intensive. Section~\ref{nonbbframings} describes the modifications necessary to extend from blackboard to arbitrary framings.

\subsection*{Acknowledgements} The author thanks Ciprian Manolescu, who was the first to mention ideal quotients to her in this context, and who provided useful input on drafts of this paper. She is also grateful for several useful conversations with Mikhail Khovanov, Robert Lipshitz, Peter Ozsv\'ath, and Zolt\'an Szab\'o. Finally, the author appreciates the hospitality of the Simons Center for Geometry and Physics, where she proved a limited version of this result (see~\cite{gilmorethesis}) while a visiting student.

\section{Definitions: Framed graphs and associated algebraic objects}
\label{dfns}
\subsection{Framed graphs}
\label{framedgraphs}
In this paper, \emph{graph} will mean an oriented graph properly embedded in the disk $D^2$ with the following properties:
\begin{enumerate}
\item vertices have degree at most three;
\item every connected component has at least one vertex of degree greater than one;
\item edges have an assigned weight of one (thin) or two (thick); 
\item edges incident to univalent and bivalent vertices are thin; and
\item for trivalent vertices, the sum of weights of incoming edges equals the sum of weights of outgoing edges.
\end{enumerate}
Univalent vertices will also be called \emph{boundary vertices} of the graph and their incident edges will be called \emph{boundary edges}. All other thin edges will be called \emph{interior edges}. Graphs with these properties are equivalent to singularized projections of tangles: exchange thick edges in the graph for 4-valent vertices as in Figure~\ref{4valwideedgeexchange}. 

A \emph{framing} of a graph $G$ will mean an extension of the embedding of $G\hookrightarrow D^2$ to an embedding $F\hookrightarrow D^2\times[0,1]$, where $F$ is a compact surface with boundary and
\begin{enumerate}
\item $G=F\cap\left(D^2\times\left\{\frac{1}{2}\right\}\right)$ is a deformation retract of $F$;
\item $F\cap\partial G=\partial F\cap \partial G$; and
\item $F\cap\left(\partial D^2\times[0,1]\right)=F\cap\left(\partial D^2\times\left\{\frac{1}{2}\right\}\right)$ with each component thereof an arc containing exactly one univalent vertex of $G$.
\end{enumerate}
The \emph{blackboard framing} of $G$ is the surface $F_0$ obtained by taking a closed neighborhood of $G$ in $D^2\times\left\{\frac{1}{2}\right\}$. We require the thick edges in our graphs to be blackboard framed. On each edge, one may compare a framing $F$ to the blackboard framing to obtain an integer, which is the number of positive or negative half-twists that must be inserted in $F_0$ to match $F$. We will represent a framed graph diagrammatically by marking each thin edge and labeling the marking with an integer. We will omit the framing from the text notation for the graph unless discussing a property that holds only for particular framings. 

We will consider framed graphs up to planar graph
 isotopies: isotopies of the graph in $D^2\times\left\{\frac{1}{2}\right\}$ that extend to isotopies of the framing surface $F$ in $D^2\times[0,1]$ and fix its intersection with $\partial D^2\times [0,1]$. It will be clear from the definitions that $\cB$ is invariant under such isotopies. We expect that $\cB$ could be extended to an invariant of knotted framed graphs (i.e.~allow the graph to be embedded in $D^2\times[0,1]$ and require merely that isotopies fix the intersection of the graph with $\partial D^2$) using a cube of resolutions construction, but we do not pursue the point here. 

\subsection{Ideals associated to framed graphs}
\label{idealdfnsec}

We work over a ground ring $\cR=\itk[t^{-1},t]]$ of Laurent series in $t$, with $\itk$ a field.\footnote{Much of the background material on Gr\"obner bases that we use generalizes to the case where $\itk$ is a Noetherian commutative ring, but certain computability properties are required of the ring for the full theory of Gr\"obner bases to generalize.} Let $G$ be a framed graph, so each thin edge of $G$ has an orientation and a marking. Assign to each thin edge a pair of indeterminates $x_i$ and $y_i$ labeling the first and second (with respect to the orientation) segments of the thin edge. Let $\underline{x}(G)$ and $\underline{y}(G)$  denote the sets of $x_i$ and $y_i$. We consider four ideals in $\cR[\underline{x}(G),\underline{y}(G)]$ associated to the graph $G$. 
\begin{dfn}
\label{idealdfn}
\begin{enumerate}
\item[(a)] the \emph{framing ideal}, $F(G)$, is generated by linear polynomials associated to markings on thin edges
\begin{align*}
t^\ell y_i-x_i\quad\text{to}\quad
{\xy
(0,-5)*{}="b";
(0,5)*{}="t";
(0,0)*{-}="m";
{\ar "b";"t"};
(2.5,3.25)*{y_i};
(2.5,-3.5)*{x_i};
(-2,0)*{\ell};
\endxy}

\end{align*}
\item[(b)] the \emph{linear ideal}, $L(G)$, is generated by linear polynomials associated to thick edges
\begin{align*}
(x_a+x_b)&-(y_c+y_d)\quad\text{to}\quad
{\xy
(-4,-6)*{}="bl";
(4,-6)*{}="br";
(-4,6)*{}="tl";
(4,6)*{}="tr";
(0,2)*{}="tm";
(0,-2)*{}="bm";
{\ar@{-} "bl";"bm"};
{\ar@{-} "br";"bm"};
{\ar "tm";"tl"};
{\ar "tm";"tr"};
{\ar@{=} "bm";"tm"};
(-5,4)*{x_a};
(5,3.5)*{x_b};
(-5,-3.5)*{y_c};
(5,-4)*{y_d};
\endxy}

\end{align*}
\item[(c)] the \emph{quadratic ideal}, $Q(G)$, is generated by quadratic polynomials associated to thick edges
\begin{align*}
x_ax_b-y_cy_d\quad\text{to}\quad

\end{align*}
and linear polynomials associated to bivalent vertices
\begin{align*}
x_a-y_c\quad\text{to}\quad
{\xy
(0,-6)*{}="b";
(0,6)*{}="t";
(0,0)*{\bullet}="m";
{\ar "b";"t"};
(2.5,2.75)*{x_a};
(2.5,-3)*{y_c};
\endxy}

\end{align*}
\item[(d)] the \emph{non-local ideal}, $N(G)$, is generated by polynomials of varying degrees associated to sets of thick edges and bivalent vertices in $G$. Let $\gm$ be such a set. The \emph{weight} $\weight(\gm)$ of $\gm$ is the sum of the framings on thin edges that are internal to $\gm$ (i.e.~have both endpoints incident to a thick edge or bivalent vertex in $\gm$). Let $x_{\gm,\text{out}}$ be the product of $x_i$ associated to thin edges from $\gm$ to its complement and $y_{\gm, \text{in}}$ be the product of $y_i$ associated to thin edges into $\gm$ from its complement. Then the generator of $N(G)$ associated to $\gm$ is $$t^{\weight(\gm)}x_{\gm,\text{out}}-y_{\gm,\text{in}},$$  We consider a thin edge with a marking denoting its framing to be a single edge. 
\end{enumerate}
\end{dfn}

The definition of a subset's weight given above differs from that in~\cite{ozsszcube}, but becomes equivalent in the edge ring when the graph has the appropriate framing.  See Proposition~\ref{hfkspec}. If $G$ is a closed braid graph (obtained from a braid diagram by replacing crossings with thick edges), then the ideal $N(G)$ has other generating sets. Rather than associating a generator to each subset in $G$, we may instead associate a generator to each closed path in $G$ or to certain regions in $D^2\setminus G$. The equivalence of all of these definitions is proved in \cite[Proposition 3.1]{reidmoves}. These alternative generating sets are smaller in general, but less well adapted to the combinatorics of Buchberger's Algorithm.

\begin{dfn}
\label{edgering}
The \emph{edge ring} of $G$ is $$\cE(G)=\cR[\underline{x}(G)]\isom\frac{\cR[\underline{x}(G),\underline{y}(G)]}{F(G)},$$ fixing the isomorphism that retains the variable $x_i$ from each generator $t^\ell y_i-x_i$ of $F(G)$.\end{dfn} 
\noi Although we have defined $L(G)$, $Q(G)$, and $N(G)$ in $\cR[\underline{x}(G),\underline{y}(G)]$, we will actually work with their images in $\cE(G)$. 

In the edge ring, we have $Q(G)\subseteq N(G)$ for any $G$. The generator associated to a bivalent vertex in $Q(G)$ is the same as it is in $N(G)$, unless the incoming and outgoing edges of the bivalent vertex coincide. If they do, and that edge has framing $\ell$, then the generator of $N(G)$ is $t^\ell-1$. The generator of $Q(G)$ is $(t^\ell-1)x$ for the appropriate edge variable $x$. Let $v$ be a thick edge in $G$ and $g_{\left\{v\right\}}$ its corresponding generator in $N(G)$. If none of its incident thin edges coincide, then $g_{\left\{v\right\}}$ is identical to the generator of $Q(G)$ associated to $v$. If some of its incident thin edges do coincide, then the generator of $Q(G)$ associated to $v$ is a multiple of $g_{\left\{v\right\}}$. For example, consider the diagram from the definition of $Q(G)$ above. Suppose $x_b$ and $y_d$ label the same thin edge, and that thin edge has framing $\ell$. Let $\ell_c$ be the framing on the edge labeled by $y_c$. Then $g_{\left\{v\right\}}=t^\ell x_a-y_c$, which becomes $t^\ell x_a-t^{-\ell_c}x_c$ in the edge ring. The generator of $Q(G)$ corresponding to $v$ is $x_ax_b-y_cy_d$, which becomes $x_ax_b-t^{-\ell_c-\ell}x_cx_b=t^{-\ell}x_bg_{\left\{v\right\}}$ in the edge ring.

We allow some of the connected components of $G$ to be designated ``special.'' Let $V_G$ be the free $\cE(G)$-module spanned by the non-special connected components of $G$. Then we define the graph homology promised in the introduction to be
\begin{equation}
\cB(G)=\tor_\ast\!\left(\frac{\cE(G)}{L(G)},\frac{\cE(G)}{N(G)}\right)\otimes\Lambda^\ast V_G.
\end{equation}

\subsection{Relation of $\cB$ to other graph homologies}
\label{relationtootherssec}

As mentioned in the introduction, $\cB(G)$ specializes to the knot Floer and (conjecturally) HOMFLY-PT graph homologies for particular framings on $G$. In this section, we indicate more precisely how those specializations hold. Although the definition of $\cB(G)$ makes sense for framed graphs in the generality described in Section~\ref{idealdfnsec}, we restrict to closed braid graphs here because of the parallel restriction on the graph homologies in~\cite{ozsszcube,reidmoves,manolescucube}. Recall from the introduction that we use ``closed'' to describe a braid graph with its outermost strand cut.

\begin{prop}
\label{homflyspec}
Let $G$ be a closed braid graph with its outermost connected component (containing the cut edge) designated special. Let {\tt b} denote the blackboard framing. Then it follows from \cite[Conjecture~1.4]{manolescucube} that
\[\cB\!\left(G^{\tt b}\right)\isom\cB_{\textrm{KR}}(G),\]
where the right-hand side is the HOMFLY-PT graph homology defined in~\cite{kr2}.
\end{prop}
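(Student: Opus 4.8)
The plan is to rewrite the HOMFLY-PT graph homology as a Tor group of exactly the shape defining $\cB(G^{\tt b})$, but with the non-local ideal replaced by the quadratic ideal, and then to quote Manolescu's conjecture that the two Tor groups coincide. Concretely, I would first establish, unconditionally,
\[\cB_{\textrm{KR}}(G)\isom\tor_\ast\!\left(\frac{\cE(G^{\tt b})}{L(G^{\tt b})},\frac{\cE(G^{\tt b})}{Q(G^{\tt b})}\right)\otimes\Lambda^\ast V_G,\]
and then apply \cite[Conjecture~1.4]{manolescucube}, which in the present notation is the assertion that $\tor_\ast\!\left(\cE(G^{\tt b})/L(G^{\tt b}),\,\cE(G^{\tt b})/Q(G^{\tt b})\right)\isom\tor_\ast\!\left(\cE(G^{\tt b})/L(G^{\tt b}),\,\cE(G^{\tt b})/N(G^{\tt b})\right)$ for braid graphs; tensoring with $\Lambda^\ast V_G$ then yields $\cB_{\textrm{KR}}(G)\isom\cB(G^{\tt b})$. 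Since $G$ is by hypothesis a closed braid graph, it is of exactly the type covered by Manolescu's conjecture.

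For the unconditional first step I would recall the Khovanov--Rozansky graph complex of \cite{kr2} in Rasmussen's formulation \cite{rasmussenonkr}: up to overall grading shifts it is a Koszul-type complex over the edge ring cut out by one linear relation $(x_a+x_b)-(x_c+x_d)$ and one quadratic relation $x_ax_b-x_cx_d$ per thick edge (together with a relation $x_a-x_c$ at each bivalent vertex), whose homology computes $\tor_\ast$ of the corresponding two quotients once one knows that the quadratic relations form a regular sequence modulo the linear ones. Matching this with our objects is then a matter of unwinding definitions: under the blackboard framing every thin edge carries framing $0$, so $F(G^{\tt b})$ is generated by the relations $y_i-x_i$, the variable $t$ enters none of $L$, $Q$, $N$, and in $\cE(G^{\tt b})=\cR[\underline{x}(G)]$ the generators of $L(G^{\tt b})$ and $Q(G^{\tt b})$ become precisely the linear and quadratic Khovanov--Rozansky relations displayed above. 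Because $\cR[\underline{x}(G)]$ is the extension of $\itk[\underline{x}(G)]$ -- or of the $\mathbb{Q}$-edge ring used in \cite{kr2}, extending scalars as necessary -- along a flat ring map, flat base change identifies the two Tor computations up to this inessential change of coefficients. Finally, the exterior factor $\Lambda^\ast V_G$ matches the Khovanov--Rozansky contribution of the closed components of $G$: each closed component that is not the designated special one contributes a tensor factor $\Lambda^\ast$ of a rank-one free $\cE(G^{\tt b})$-module, and declaring the outermost, cut component special removes exactly the factor that the marked-point convention of \cite{kr2} removes.

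I expect the main obstacle to be this first step rather than the appeal to Manolescu's conjecture. One must match Khovanov and Rozansky's complex -- most naturally described via matrix factorizations, or via a Koszul complex on a potential -- with the ideal-theoretic Tor description at the level of \emph{homology}, not merely of complexes, and check that the internal gradings, the handling of the cut strand and the special component, and the passage between reduced and unreduced conventions all line up. This identification is essentially contained in \cite{rasmussenonkr} and \cite{manolescucube}; once it is assembled, the proposition follows formally from flat base change and \cite[Conjecture~1.4]{manolescucube}.
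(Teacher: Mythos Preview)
Your proposal is correct and follows essentially the same two-step strategy as the paper: identify $\cB_{\textrm{KR}}(G)$ with $\tor_\ast\!\left(\cE(G^{\tt b})/L(G^{\tt b}),\,\cE(G^{\tt b})/Q(G^{\tt b})\right)\otimes\Lambda^\ast V_G$, then invoke Manolescu's conjecture to swap $Q$ for $N$. The one difference is that you propose to establish the first identification from scratch via Rasmussen's Koszul-complex reformulation and flat base change, whereas the paper simply observes that under the blackboard framing its $L$, $Q$, $N$, $\cE$ coincide with Manolescu's and then cites \cite[Theorem~1.2]{manolescucube}, which already packages exactly this identification of $\cB_{\textrm{KR}}$ with the $Q$-version Tor group. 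Your route is more self-contained but duplicates work already in the literature; the paper's route is shorter but relies on the reader tracking down that theorem.
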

\begin{proof}
It is immediate from the definitions that $L$, $Q$, and $N$ (as ideals in $\cE$) are identical to those defined in \cite{manolescucube}. When $G$ is blackboard framed, every subset has weight zero and the framing ideal merely identifies each $x_i$ with its corresponding $y_i$. Our $\cB(G^{\tt b})$ and $\cB_{\textrm{KR}}(G)$ are then identical to those in \cite{manolescucube}. Theorem~1.2 of~\cite{manolescucube} is that $\cB_{\textrm{KR}}$ is the HOMFLY-PT graph homology from~\cite{kr2} and Conjecture~1.4 of~\cite{manolescucube} is that $\cB\isom\cB_{\textrm{KR}}$.
\end{proof}

\begin{prop}
\label{hfkspec}
Let $G$ be a closed braid graph with its outermost connected component (containing the cut edge) designated special. Let {\tt e} denote the framing in which each thin edge is +1 framed. Then 
\[\cB\!\left(G^{\tt e}\right)\isom\cA(G),\]
where $\cA$ is the knot Floer graph homology defined in~\cite{ozsszcube}.
\end{prop}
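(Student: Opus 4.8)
The plan is to compare our construction $\cB(G^{\tt e})$ with Ozsv\'ath and Szab\'o's definition of $\cA(G)$ ideal-by-ideal, after fixing the $+1$ framing on every thin edge. First I would unwind the framing ideal: with framing $\ell=1$ on each thin edge, $F(G)$ is generated by $ty_i-x_i$, so in the edge ring $\cE(G^{\tt e})=\cR[\underline{x}(G)]$ every $y_i$ becomes $t^{-1}x_i$. I would then substitute this into each of $L(G)$, $Q(G)$, and $N(G)$ to see what generating sets we obtain, and compare with the generators appearing in~\cite{ozsszcube}. The linear ideal $L(G)$ becomes generated by $(x_a+x_b)-t^{-1}(x_c+x_d)$, and $Q(G)$ by $x_ax_b-t^{-2}x_cx_d$ together with $x_a-t^{-1}x_c$ at bivalent vertices; up to the harmless overall scaling by powers of $t$ (which is a unit in $\cR$), these should match the ``linear'' and ``quadratic'' relations in the Ozsv\'ath--Szab\'o algebra.

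The heart of the matter is the non-local ideal. I would take an arbitrary subset $\gm$ of thick edges and bivalent vertices, and compute its generator $t^{\weight(\gm)}x_{\gm,\out}-y_{\gm,\into}$ in $\cE(G^{\tt e})$. Since $\gm$ has, say, $m$ internal thin edges, $\weight(\gm)=m$; and $y_{\gm,\into}$ is a product of $y_i$ over the incoming boundary edges of $\gm$, each of which contributes a factor $t^{-1}$. So the generator becomes $t^{m}x_{\gm,\out}-t^{-(\#\,\text{in})}x_{\gm,\into}$, i.e.\ (after multiplying by the unit $t^{\#\,\text{in}}$) $t^{m+(\#\,\text{in})}x_{\gm,\out}-x_{\gm,\into}$. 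The key bookkeeping step is to check that $m+\#\{\text{edges into }\gm\}$ equals the weight that Ozsv\'ath and Szab\'o assign to $\gm$: they count (loosely) the edges ``associated'' to $\gm$, and the remark after Definition~\ref{idealdfn} explicitly warns that our notion of weight differs from theirs but agrees in the edge ring once the framing is right. Making that warning precise is exactly this computation, so I would state it as the main lemma inside the proof and verify it by tracking, for each internal edge and each incoming edge, which $t$-powers it contributes on the two sides. Finally, I would note that the free exterior factor $\Lambda^\ast V_G$ (with the outermost component designated special, so excluded from $V_G$) matches the corresponding tensor factor in $\cA(G)$, and that $\tor_\ast$ of the two quotient modules agree since we have matched $L$, $N$, and the edge ring $\cE$.

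The main obstacle I expect is the weight comparison: getting the combinatorial identity between $\weight(\gm)$ (our framing-based count) and the Ozsv\'ath--Szab\'o weight exactly right, including the precise treatment of bivalent vertices, of thin edges both of whose endpoints lie in $\gm$ versus one endpoint, and of the special/cut component. Signs and off-by-one errors in the exponent of $t$ are the real danger, and one must be careful that the ``cut'' of the outermost strand (so that $G$ has boundary rather than being truly closed) does not disturb the count. Once that identity is in hand, the rest is a routine matter of recognizing that scaling generators by units of $\cR$ does not change the ideal, and that the isomorphism of edge rings in Definition~\ref{edgering} intertwines everything. I would therefore organize the proof as: (i) substitute $y_i=t^{-1}x_i$; (ii) match $L$ and $Q$ up to units; (iii) prove the weight lemma and match $N$ up to units; (iv) match $V_G$ and conclude the $\tor$ groups and exterior factors agree, hence $\cB(G^{\tt e})\isom\cA(G)$.
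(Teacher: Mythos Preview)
Your steps (i)--(iii) are essentially the paper's argument: substitute $y_i=t^{-1}x_i$, match the generators of $L$ and $Q$, and verify that the non-local generator of $N$ becomes $t^{\weight(\gm)+\#\{\text{edges into }\gm\}}x_{\gm,\out}-x_{\gm,\into}$, then check that this exponent equals the Ozsv\'ath--Szab\'o weight (twice the number of thick edges plus the number of bivalent vertices in $\gm$). That part is fine.

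The gap is in step (iv). You are assuming that $\cA(G)$ in~\cite{ozsszcube} is itself defined as a $\tor_\ast$ tensored with an exterior algebra, so that once the ideals match, the whole of $\cB(G^{\tt e})$ matches term-by-term. It is not: $\cA(G)$ is defined there as the quotient algebra $\cE/(L+N)$, which is only the degree-zero piece $\tor_0(\cE/L,\cE/N)$ of $\cB(G^{\tt e})$. So after matching the ideals you have only shown $\cA(G)\isom\cB(G^{\tt e})_{(0)}$, and there is no ``corresponding tensor factor $\Lambda^\ast V_G$'' in $\cA(G)$ to match. Two further arguments are needed. First, when $G$ is connected (so $V_G=0$ since the only component is special), you must show that the higher $\tor_i$ vanish; the paper does this by adapting the regularity argument behind~\cite[Theorem~3.1]{ozsszcube}. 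Second, when $G$ is disconnected you must show both sides vanish: a closed component not containing the cut strand contributes a generator $t^\ell-1$ with $\ell>0$ to $N(G^{\tt e})$, and since $t^\ell-1$ is a unit in $\cR$ this kills $\cE/N$ and hence $\cB(G^{\tt e})$; on the other side, $\cA(G)=0$ for disconnected $G$ by the same mechanism. Without these two pieces your outline does not close.
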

\begin{proof}
Treat the graph $G$ as a singular knot by replacing thick edges with 4-valent vertices as in Figure~\ref{4valwideedgeexchange}. The images in $\cE\!\left(G^{\tt e}\right)$ of the generating sets of $L\!\left(G^{\tt e}\right)$, $Q\!\left(G^{\tt e}\right)$, and $N\!\left(G^{\tt e}\right)$ are exactly the ideal generated by the relations given in the definition of $\cA(G)$ in~\cite[Section~1]{ozsszcube}. For example, the generator $x_a+x_b-y_c-y_d$ of $L\!\left(G^{\tt e}\right)$ becomes $x_a+x_b-t^{-1}x_c-t^{-1}x_d$ in $\cE\!\left(G^{\tt e}\right)$ via the elements $ty_c-x_c$ and $ty_d-x_d$ of $F\!\left(G^{\tt e}\right)$. The generator $t^{\weight(\gm)}x_{\gm,\out}-y_{\gm,\into}$ becomes $t^{\weight(\gm)}x_{\gm,\out}-t^{-\weight_{\into}(\gm)}x_{\gm,\into}$, where $\weight_{\into}(\gm)$ is the sum of the framings on the thin edges into $\gm$ from its complement and $x_{\gm,\into}$ is the product of the $x_i$ associated to those thin edges. Clearing denominators, we have $\weight(\gm)+\weight_{\into}(\gm)$ as the exponent of $t$, which is the sum of the framings on all thin edges incoming to $\gm$. When $G$ has the framing {\tt e}, that sum is twice the number of thick edges plus the number of bivalent vertices in $\gm$, which is the weight that Ozsv\'ath and Szab\'o assign to $\gm$ in~\cite{ozsszcube}. 

It follows directly from these observations that the knot Floer graph homology $\cA(G)$ defined in~\cite[Section~1]{ozsszcube} is isomorphic to $\cE\!\left(G^{\tt e}\right)/\left(L\!\left(G^{\tt e}\right)+N\!\left(G^{\tt e}\right)\right)$, which is the degree zero part of $\cB\!\left(G^{\tt e}\right)$. If $G$ is connected, then one may adapt the argument for~\cite[Theorem~3.1]{ozsszcube} to see that $\cB\!\left(G^{\tt e}\right)$ is concentrated in degree zero, so $\cB\!\left(G^{\tt e}\right)\isom\cA(G)$. If $G$ is not connected, then $\cA(G)$ vanishes. The same is true of $\cB\!\left(G^{\tt e}\right)$. The complete set of thick edges and bivalent vertices in a closed component of $G^{\tt e}$ not containing the cut strand will yield a generator of the form $t^\ell-1$ in $N\!\left(G^{\tt e}\right)$, where $\ell>0$. Since $t^\ell-1$ is a unit in the ground ring, $\cE\!\left(G^{\tt e}\right)/N\!\left(G^{\tt e}\right)$ will vanish.
\end{proof}

Finally, we define a framing for which $\cB$ specializes to the graph homology studied in~\cite{reidmoves}, which is an alternative knot Floer graph homology that also satisfies certain categorified Murakami-Ohtsuki-Yamada relations. Let $G$ be obtained (by replacing crossings with thick edges) from a braid diagram in which each crossing appears in a distinct horizontal layer. If there are $k$ thick edges between the incident thick edges of a given thin edge, assign the framing $k+1$ to that thin edge. The total framing on each strand will be equal to the number of thick edges in $G$. Call this framing {\tt s}.
\begin{prop}
\label{gilmorespec}
Let $G$ be a closed braid graph with its outermost connected component (containing the cut edge) designated special. Let {\tt s} be the framing defined above. Then 
\[\cB\!\left(G^{\tt s}\right)\isom\cA(G),\]
where $\cA$ is the alternative knot Floer graph homology defined in~\cite{reidmoves}. 
\end{prop}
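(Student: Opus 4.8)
The plan is to follow the proof of Proposition~\ref{hfkspec} essentially verbatim, replacing the framing {\tt e} by {\tt s} and the reference \cite{ozsszcube} by \cite{reidmoves}. First I would verify that the images in $\cE(G^{\tt s})$ of the generating sets of $L(G^{\tt s})$, $Q(G^{\tt s})$, and $N(G^{\tt s})$ coincide with the ideals used to define $\cA(G)$ in \cite{reidmoves}. For the linear and quadratic generators this is a direct computation, exactly as in Proposition~\ref{hfkspec}: the isomorphism $\cE(G)\isom\cR[\underline{x}(G),\underline{y}(G)]/F(G)$ substitutes $t^{-\ell_i}x_i$ for $y_i$, where $\ell_i$ is the {\tt s}-framing of the $i$-th thin edge, and clearing denominators produces the linear and quadratic relations of \cite{reidmoves}. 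The substance is in the non-local generators. A generator $t^{\weight(\gm)}x_{\gm,\out}-y_{\gm,\into}$ of $N(G^{\tt s})$ becomes, after substitution and clearing denominators, $t^{e(\gm)}x_{\gm,\out}-x_{\gm,\into}$, where $e(\gm)=\weight(\gm)+\weight_{\into}(\gm)$ is the sum of the {\tt s}-framings over all thin edges that point into a thick edge or bivalent vertex of $\gm$. I would then check that $e(\gm)$ equals the weight that \cite{reidmoves} attaches to $\gm$, using the defining property of {\tt s}: the framings along any braid strand telescope, so that the total framing on each strand is the total number $N$ of thick edges of $G$. Carrying out this bookkeeping—and in particular accounting for bivalent vertices, which arise once edges of the braid graph are identified (compare the discussion following Definition~\ref{edgering}), as well as for the two boundary edges of the cut strand—is the one step that requires genuine care; it is the part I expect to be the main obstacle, although by the very way {\tt s} is engineered the count should come out to match \cite{reidmoves} exactly.

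Granting that identification, $\cA(G)$ as defined in \cite{reidmoves} is by construction $\cE(G^{\tt s})/(L(G^{\tt s})+N(G^{\tt s}))$, which is precisely the degree-zero part of $\cB(G^{\tt s})=\tor_\ast(\cE(G^{\tt s})/L(G^{\tt s}),\cE(G^{\tt s})/N(G^{\tt s}))\otimes\Lambda^\ast V_G$. To conclude in the connected case I would argue that $\cB(G^{\tt s})$ is concentrated in homological degree zero: the factor $\Lambda^\ast V_G$ contributes nothing because the only connected component of $G$ is the special one, so $V_G=0$, and the higher $\tor$ groups vanish by the argument used for \cite[Theorem~3.1]{ozsszcube}, adapted exactly as in Proposition~\ref{hfkspec} (alternatively one may quote the corresponding vanishing statement established for this graph homology in \cite{reidmoves}). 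Hence $\cB(G^{\tt s})\isom\cA(G)$ when $G$ is connected.

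For disconnected $G$ both sides vanish, and the argument is identical to that in Proposition~\ref{hfkspec}. On the one hand, $\cA(G)$ of \cite{reidmoves} vanishes by definition for disconnected graphs. On the other hand, by axiom~(2) of Section~\ref{framedgraphs} any closed connected component of $G$ disjoint from the cut strand contains a trivalent vertex, hence a thick edge; taking $\gm$ to be the set of all thick edges and bivalent vertices of that component gives a subset with no thin edges into or out of it, so its associated generator in $N(G^{\tt s})$ is $t^{\weight(\gm)}-1$ with $\weight(\gm)>0$ (the total {\tt s}-framing of the component). Since $t^{\weight(\gm)}-1$ is a unit in $\cR$, we get $\cE(G^{\tt s})/N(G^{\tt s})=0$, and therefore $\cB(G^{\tt s})=0$. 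This would complete the proof.
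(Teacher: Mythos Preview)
Your proposal is correct and follows essentially the same approach as the paper's own proof. The one place where the paper adds a concrete idea that you flag as ``the main obstacle'' but do not resolve is the mechanism for matching the non-local weights: the paper observes that replacing a marking of framing $k$ on a thin edge with $k-1$ bivalent vertices converts $G^{\tt s}$ into exactly the layered diagram studied in \cite{reidmoves}, after which the identification of $L$, $Q$, $N$ is a direct trace through the definitions (and the paper also notes that adapting \cite[Theorem~3.1]{ozsszcube} to the {\tt s}-framing requires ``a bit more care'' than in Proposition~\ref{hfkspec}, rather than going through verbatim).
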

\begin{proof}
The argument is almost the same as for Proposition~\ref{hfkspec}. If $G$ is disconnected, both $\cB\!\left(G^{\tt s}\right)$ and $\cA(G)$ vanish, again because a generator of the form $t^\ell-1$ appears in $N\!\left(G^{\tt s}\right)$. Otherwise, with a bit more care, the proof of \cite[Theorem~3.1]{ozsszcube} may be adapted to prove that $\cB\!\left(G^{\tt s}\right)$ is concentrated in degree zero. 

The degree zero part of $\cB\!\left(G^{\tt s}\right)$ is $\cE\!\left(G^{\tt s}\right)/\left(L\!\left(G^{\tt s}\right)+N\!\left(G^{\tt s}\right)\right)$. Compare the layered diagram used to define the framing {\tt s} with the layered diagrams studied in~\cite{reidmoves} by replace a marking denoting a framing of $k$ with $k-1$ bivalent vertices. Tracing through the definitions, one may confirm that the images of $L\!\left(G^{\tt s}\right)$, $Q\!\left(G^{\tt s}\right)$, and $N\!\left(G^{\tt s}\right)$ in $\cE\!\left(G^{\tt s}\right)$ are exactly the $L$, $Q$, and $N$ defined in~\cite{reidmoves} (after clearing denominators in some generators). Therefore, the degree zero part of $\cB\!\left(G^{\tt s}\right)$ is isomorphic to $\cA(G)$.
\end{proof}

\subsection{Examples}
\label{smallexamplesec}

\begin{eg}
\label{onecrossingunknot}
Consider the diagram with one thick edge, two boundary edges, and framing $\ell$ on the remaining thin edge. Assume that the edges labeled $x$ and $z$ were blackboard-framed and that we have already used the corresponding generators of the framing ideal to eliminate one variable associated to each.\smallskip

\hspace{.01cm}
{\xy
(-4,-6)*{}="bl";
(4,-9)*{}="br";
(-4,6)*{}="tl";
(4,9)*{}="tr";
(0,2)*{}="tm";
(0,-2)*{}="bm";
{\ar@{-} "bl";"bm"};
{\ar "tm";"tl"};
{\ar@{=} "bm";"tm"};
{"bm";"tm" **\crv{"br" & (7,0) & "tr"}};
{\ar (3.5,5.7);(3.7,5.8)};
(5.5,0)*{-};
(-5,4)*{x};
(7,5)*{w};
(-5,-4)*{z};
(7,-5)*{y};
(7.5,0)*{\ell};
\endxy}
\hspace{-.2cm}
\begin{tabular}{c p{3.2cm} | p{6.2cm}}
&
{$F=(t^\ell y-w)$

$\cE\isom\cR[w,x,z]$

$L=(x+w-z-t^{-\ell}w)$

$Q=(xw-t^{-\ell}wz)$

$N=(t^\ell x-z)$
}
& 
{$\cE/N\xrightarrow{x+w-z-t^{-\ell}w}\cE/N$

\vspace{.1cm}

$\cR[w,z]\xrightarrow{(t^{-\ell}-1)(z-w)}\cR[w,z]$

\vspace{.2cm}

$\cB\isom\begin{cases}
\cR[w,z]_{(1)}\oplus\cR[w,z]_{(0)} & \text{ if } \ell=0\\
\cR[z]_{(0)} & \text{otherwise.}\end{cases}$
}
\end{tabular}
\smallskip

\noindent On the left, we present the edge ring under the convention for retaining edge labels specified in Definition~\ref{edgering}, along with generating sets for (the images of) $L$, $Q$, and $N$ in that edge ring. On the right, we use the sole generator of $L$ to resolve $\cE/L$, then tensor with $\cE/N$ to obtain (in the first line) a chain complex with homology $\tor_\ast(\cE/L,\cE/N)$. In the second line, we simplify that chain complex using the generator of $N$ to eliminate $x$. The result in the third line follows: the map $(t^{-\ell}-1)(z-w)$ has no kernel unless $\ell=0$.
\end{eg}

\begin{eg}
\label{twocrossingexample}
Consider the diagram below. Assume that unmarked edges were blackboard-framed and that we have already eliminated one variable associated to each of them using the appropriate generators of the framing ideal. \smallskip

\hspace{.001cm}
{\xy
(-4,-6)*{
{\xy
(-4,-6)*{}="bl";
(4,-6)*{}="br";
(-4,6)*{}="tl";
(4,6)*{}="tr";
(0,2)*{}="tm";
(0,-2)*{}="bm";
{\ar@{-} "bl";"bm"};
{\ar@{-} "br";"bm"};
{\ar@{-} "tm";"tl"};
{\ar@{-} "tm";"tr"};
{\ar@{=} "bm";"tm"};
\endxy}
};
(4,6)*{};
{(8,12);(8,0) **\crv{(10,14) & (14,12) & (14,0) & (14,-12) & (10,-14) & (8,-12)}};
{(0,12);(0,-12) **\crv{(1,17) & (10,19) & (16,17) & (18,0) & (16,-17) & (10,-19) & (1,-16)}};
{\ar (-8,0);(-8,12)};
(-10,4)*{x};
(-10,-11)*{v};
(3.5,1.25)*{w^\prime};
(-2.5,.5)*{\scriptstyle{\ell}};
(0,0)*{-};
(0,-2.5)*{w};
(9,14.5)*{z^\prime};
(10,11)*{\scriptstyle{m}};
(8,12)*{-};
(5,11.5)*{z};
(2.5,18.5)*{y^\prime};
(3.5,15)*{\scriptstyle{k}};
(1.25,15)*{-};
(-2,12.5)*{y};
\endxy}
\begin{tabular}{c p{7cm}}
&
{$F=(t^\ell w^\prime-w, t^k y^\prime-y, t^mz^\prime-z)$\medskip

$\cE\isom\cR[v,w,x,y,z]$\medskip

$L=(y+z-t^{-\ell}w-t^{-m}z,x+w-v-t^{-k}y)$\medskip

$Q=((y-t^{-\ell-m}w)z,xw-t^{-k}vy)$\medskip

$N=(t^{m}y-t^{-\ell}w,t^{k+\ell+m}x-v)$\medskip
}
\end{tabular}\smallskip

\noindent The generators of $L$ are a regular sequence in $\cE$. We use the Koszul complex to resolve $\cE/L$, then tensor with $\cE/N$ to obtain a complex whose homology is $\tor_\ast(\cE/L,\cE/N)$. In the second line below, we simplify by using the generators of $N$ to eliminate $w$ and $v$.
\begin{align*}
\left(\cE/N\xrightarrow{y-t^{-\ell}w+(1-t^{-m})z}\cE/N\right)&\otimes\left(\cE/N\xrightarrow{x+w-v-t^{-k}y}\cE/N\right)\\
\left(\cR[x,y,z]\xrightarrow{(t^m-1)(t^{-m}z-y)}\cR[x,y,z]\right)&\otimes\left(\cR[x,y,z]\xrightarrow{(t^{k+\ell+m}-1)(t^{-k}y-x)}\cR[x,y,z]\right)
\end{align*}

\noindent The homology of the complex above depends on whether $m$ and/or $k+\ell+m$ is zero. If one or both values is zero, then the maps in one or both tensor factors of the complex are zero. Otherwise, $t^m-1$ and/or $t^{k+\ell+m}-1$ is a unit in $\cR$ and the map involving that factor has no kernel. When both maps are non-zero, we have a Koszul complex on a regular sequence in $\cR[x,y,z]$. Therefore, the possible outcomes are as follows.

\[\cB\isom\begin{cases}
\cR[x]_{(0)} & m\neq 0, k+\ell+m\neq 0\\
\cR[x,z]_{(0)}\oplus\cR[x,z]_{(1)} & m=0, k+\ell+m\neq 0\\
\cR[x,y]_{(0)}\oplus\cR[x,y]_{(1)} & m\neq0, k+\ell+m=0\\
\cR[x,y,z]_{(0)}\oplus\cR[x,y,z]^{\oplus 2}_{(1)}\oplus\cR[x,y,z]_{(2)} & m=0, k+\ell+m=0\\
\end{cases}\]
\end{eg}

\subsection{Statement of main result}
\label{mainresult}

Our main result concerns the process of closing strands in braid graphs and the corresponding algebraic operation on the non-local ideal defined in Section~\ref{idealdfnsec}. Let $\sigma$ be a braid with $b$ strands, none of which are closed. Let $G_\sigma=G_\sigma\ksup{0}$ be the framed braid graph obtained by replacing the crossings in $\sigma$ with thick edges. Let $G_\sigma\ksup{1},\ldots,G_\sigma\ksup{b-1}$ be the intermediary graphs obtained by closing one strand of $G_\sigma$ at a time from right to left, as in Figure~\ref{intermedgraphs}. 

The final graph $G_\sigma\ksup{b-1}$ has two boundary vertices. We refer to it as the closure of $G$ nonetheless and consider it to be the singularization of $\widehat{\sigma}$, writing $G_{\widehat{\sigma}}$ for $G_\sigma\ksup{b-1}$.
The choice to work with diagrams in which the outermost strand is cut is typical in the quantum topology literature because the representation theory underlying the Alexander polynomial demands it (see e.g.~\cite{viroquantumrel}). It is also in line with the use of basepoints in related work~\cite{ozsszcube,reidmoves,manolescucube}.

We work in the edge rings $\cE\!\left(G_\sigma\ksup{k}\right)$, where we have retained the edge labels in $\underline{x}\!\left(G_\sigma\ksup{k}\right)$ and discarded those in $\underline{y}\!\left(G_\sigma\ksup{k}\right)$ under the isomorphism specified in Definition~\ref{edgering}. We assume that generators of the ideals $L\!\left(G_\sigma\ksup{k}\right)$, $Q\!\left(G_\sigma\ksup{k}\right)$, and $N\!\left(G_\sigma\ksup{k}\right)$ have been rewritten accordingly. In diagrams, we assume that edges bearing a single label have retained the appropriate one under our conventions. Abbreviate the edge ring $\cE\!\left(G_\sigma\ksup{k}\right)$ as $\cE_k$.

Figure~\ref{intermedgraphs} illustrates the notation used in the rest of this section. Let $\ztau\ksup{k}$ and $\zbeta\ksup{k}$ denote the top- and bottom-most remaining edges on the $k^\text{th}$ strand of $G_\sigma$ after the variables in $\underline{y}\!\left(G_\sigma\right)$ have been discarded and the markings denoting framing removed from thin edges. The $\ztau\ksup{k}$ and $\zbeta\ksup{k}$ are not new variables, but simply alternate names for certain variables in $\underline{x}\!\left(G_\sigma\right)$. Let $a_k$ be the framing on the top boundary edge of the $k^\text{th}$ strand of $G_\sigma$. Let $Z_{k+1}\subset\cE_k$ be the ideal generated by $\ztau\ksup{k+1}-t^{a_{k+1}}\zbeta\ksup{k+1}$. Closing the $(k+1)^\text{st}$ strand of $G_\sigma\ksup{k}$ corresponds to taking the quotient of $\cE_k$ by $Z_{k+1}$. Let $\pi_k:\cE_k\to \cE_{k+1}$ be the quotient map with kernel $Z_{k+1}$ that retains $\ztau\ksup{k+1}$ and discards $\zbeta\ksup{k+1}$. In $G\ksup{k}$, we call the joined edges $\ztau\ksup{i}=t^{a_i}\zbeta\ksup{i}$ for $i\leq k$ \emph{closure edges} and continue to call the remaining $\ztau\ksup{i}$ and $\zbeta\ksup{i}$ for $i>k$ \emph{boundary edges}.

Closing $G_\sigma$ completely to $G_{\widehat{\sigma}}$ corresponds to taking successive quotients of $\cE(G_\sigma)$ by each of the $Z_k$. Equivalently, let $Z\subset\cE(G_\sigma)$ be the ideal generated by $\ztau\ksup{k}-t^{a_k}\zbeta\ksup{k}$ for $1\leq k \leq b-1$. Then the edge rings of $G_{\sigma}$ and its closure are related by $\cE(G_\sigma)/Z\isom\cE(G_{\widehat{\sigma}})$.
Let $\pi:\cE(G_\sigma)\to \cE(G_{\widehat{\sigma}})$ be the composition $\pi_{b-2}\circ\cdots\circ\pi_0$, so $\pi$ retains all of the $\ztau\ksup{k}$ and discards all of the $\zbeta\ksup{k}$ except $\zbeta\ksup{b}$.

For each intermediary graph, we have ideals $L\!\left(G_\sigma\ksup{k}\right)$, $Q\!\left(G_\sigma\ksup{k}\right)$, and $N\!\left(G_\sigma\ksup{k}\right)$ in $\cE_k$.  We abbreviate these $L_k$, $Q_k$, and $N_k$.  The generators of the linear and quadratic ideals depend only on local information at each thick edge, which will not change when strands are closed. Therefore, $\pi_k(L_k)=L_{k+1}$ and 
$\pi_k(Q_{k})=Q_{k+1}$. The analogous statement is not at all true for the non-local ideal. The set of edges that are internal to a subset $\gm$ may change as we proceed from $G_\sigma\ksup{k}$ to $G_{\sigma}\ksup{k+1}$. Specifically, a subset $\gm$ in $G_\sigma\ksup{k}$ may have $\ztau\ksup{k+1}$ as an outgoing edge and $\zbeta\ksup{k+1}$ as an incoming edge. The generator of $N_k\subset\cE_k$ associated to $\gm$ will then have $\ztau\ksup{k+1}$ dividing one of its terms and $\zbeta\ksup{k+1}$ dividing the other. Under $\pi_k$, the $\zbeta\ksup{k+1}$ will be replaced by $t^{a_{k+1}}\ztau\ksup{k+1}$, so both of its terms will be divisible by $\ztau\ksup{k+1}$. In $G_\sigma\ksup{k+1}$, however, the closure edge on the $(k+1)^\text{st}$ strand will be internal to $\gm$. Therefore, neither term of the generator of $N_{k+1}$ associated to $\gm$ will be divisible by $\ztau\ksup{k+1}$. Refer to Section~\ref{examplesection} and the set $\gm\cup\dl$ in Figure~\ref{smallexample} for an example of how this can occur. 

We have seen so far that $\pi_k(N_{k})\subsetneq N_{k+1}$. The content of our main result is that the situation described above fully explains the behavior of the non-local ideal under braid closure. That is, closing a braid strand corresponds to taking a quotient (Definition~\ref{iqdfn}) of the non-local ideal. Now that all of the necessary notation is in place, we remind the reader of our main result and its corollary.

\begin{thm*}[Restatement of Theorem~\ref{idealqthmintro}]
With notation as above, the following ideals are equal in $\cE\!\left(G_\sigma\ksup{k+1}\right)$
 $$\pi_k\!\left(N\!\left(G_\sigma\ksup{k}\right)\right) : \left(\ztau\ksup{k+1}\right) = N\!\left(G_\sigma\ksup{k+1}\right)$$ for $0\leq k\leq b-2$.
\end{thm*}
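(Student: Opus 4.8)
The plan is to prove the two inclusions separately; the inclusion $N_{k+1}\subseteq\pi_k(N_k):\left(\ztau\ksup{k+1}\right)$ is easy, and the reverse inclusion $\pi_k(N_k):\left(\ztau\ksup{k+1}\right)\subseteq N_{k+1}$ carries all the content. For a subset $\Gamma$ of thick edges and bivalent vertices, write $g_\Gamma\ksup{j}$ for the non-local generator $t^{\weight(\Gamma)}x_{\Gamma,\out}-y_{\Gamma,\into}$ of $N_j$ attached to $\Gamma$, read in $\cE_j$. Closing the $(k+1)^{\text{st}}$ strand merges the two boundary edges $\ztau\ksup{k+1}$ and $\zbeta\ksup{k+1}$ into a single \emph{closure edge} $e$ and leaves everything else alone, so subsets of $G_\sigma\ksup{k}$ and of $G_\sigma\ksup{k+1}$ are in canonical bijection. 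For a given $\Gamma$, exactly one of four things happens in $G_\sigma\ksup{k+1}$: the closure edge $e$ is external to, outgoing from, incoming to, or internal to $\Gamma$. In the first three cases a short ``clearing denominators'' computation, in the style of the proof of Proposition~\ref{hfkspec}, gives $\pi_k(g_\Gamma\ksup{k})=g_\Gamma\ksup{k+1}$; in the internal case it gives $\pi_k(g_\Gamma\ksup{k})=t^{c}\,\ztau\ksup{k+1}\,g_\Gamma\ksup{k+1}$ for some integer $c$, since the framing $a_{k+1}$ on $e$ now enters $\weight(\Gamma)$ while $\ztau\ksup{k+1}=t^{a_{k+1}}\zbeta\ksup{k+1}$ divides both terms of $\pi_k(g_\Gamma\ksup{k})$ but neither term of $g_\Gamma\ksup{k+1}$. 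Because $t$ is a unit in $\cR$, these identities immediately yield both $\pi_k(N_k)\subseteq N_{k+1}$ and $\ztau\ksup{k+1}N_{k+1}\subseteq\pi_k(N_k)$; the latter is the easy inclusion.

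For the reverse inclusion I would compute a generating set of the colon ideal directly from the defining generators $\{\pi_k(g_\Gamma\ksup{k})\}$ of $\pi_k(N_k)$ by running Buchberger's Algorithm and extracting a generating set of the ideal quotient. Fix a monomial order on $\cE_{k+1}$ adapted to the braid, chosen so that the generators $t^{\weight(\Gamma)}x_{\Gamma,\out}-y_{\Gamma,\into}$ have uniformly controlled leading monomials and $\ztau\ksup{k+1}$ is as small as possible. The essential point — the ``miracle'' referred to in the introduction — is that the combinatorics of leading-monomial overlaps in Buchberger's Algorithm matches the combinatorics of subsets of a braid graph: a nontrivial S-polynomial of $\pi_k(g_{\Gamma_1}\ksup{k})$ and $\pi_k(g_{\Gamma_2}\ksup{k})$ reduces, modulo the generators produced so far, either to $0$ or — after dividing out the single factor of $\ztau\ksup{k+1}$ that is forced precisely when the overlap makes $e$ internal — to the generator $g_{\Gamma_3}\ksup{k+1}$ of a third subset $\Gamma_3$ obtained by combining $\Gamma_1$ and $\Gamma_2$ along strand $k+1$. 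Running this induction over subsets through to termination shows that $\pi_k(N_k):\left(\ztau\ksup{k+1}\right)$ is generated by $\{g_\Gamma\ksup{k+1}\}$ over all subsets $\Gamma$ of $G_\sigma\ksup{k+1}$, which is exactly $N_{k+1}$. (One by-product is that $\ztau\ksup{k+1}$ is a non-zero-divisor modulo $N_{k+1}$.)

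I expect the main obstacle to be precisely this S-polynomial bookkeeping: choosing the monomial order so that all of the generators $t^{\weight(\Gamma)}x_{\Gamma,\out}-y_{\Gamma,\into}$ have controlled leading terms; enumerating the ways two such leading monomials can overlap, which amounts to understanding exactly how two subsets of a braid graph can share the boundary and interior edges of the strand being closed; and then proving, by induction over subsets, that every S-polynomial reduces either to zero or to another non-local generator, with the process terminating. I would carry this out first for the blackboard framing, where every weight vanishes and only the monomial combinatorics is in play, and then extend to arbitrary framings by checking that reinstating the exponents $t^{\weight(\Gamma)}$ disturbs none of the reductions; this is the division of labour between Sections~\ref{bbrd1}, \ref{bbrd2}, and~\ref{nonbbframings}. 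A second, more routine matter is pinning down the ``clearing denominators'' comparison of $\pi_k(g_\Gamma\ksup{k})$ with $g_\Gamma\ksup{k+1}$ in the internal case, since that is what isolates the lone factor of $\ztau\ksup{k+1}$ responsible for the appearance of the ideal quotient.
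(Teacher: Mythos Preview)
Your overall plan---easy inclusion plus Buchberger's Algorithm for the hard inclusion, blackboard case first, then reinstate the weights---matches the paper's strategy. But two technical choices in your sketch are inverted relative to what actually makes the argument go through, and they are not cosmetic.

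First, the monomial order. You propose making $\ztau\ksup{k+1}$ \emph{as small as possible}. The paper does the opposite: $\ztau\ksup{k+1}$ is the \emph{largest} edge variable, second only to the auxiliary variable $\nu$ (Definition~\ref{edgeringmo}). This is what drives Observation~\ref{MOanddiv}: if $\ztau\ksup{k+1}$ divides exactly one term of a $\nu$-free polynomial, that term must lead. This observation is used repeatedly to determine term orders and to decide which reductions are available; with $\ztau\ksup{k+1}$ small you lose exactly this control, and the case analysis in Lemmas~\ref{subsetoverlap}, \ref{ztautilde}, \ref{doubletilde}, \ref{2nunonu}, and \ref{tildeprop} no longer works.

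Second, and more structurally, you describe the quotient generators $g_\Gamma\ksup{k+1}$ as arising from S-polynomials between pairs of subset generators $\pi_k(g_{\Gamma_1}\ksup{k})$ and $\pi_k(g_{\Gamma_2}\ksup{k})$. In the paper they do not. The paper implements the standard intersection trick (Algorithm~\ref{iqalg}): adjoin a variable $\nu$, form the basis $\cG_0=\{\nu g_\Gamma\ksup{k}\}\cup\{\nu\ztau\ksup{k+1}-\ztau\ksup{k+1}\}$, and run Buchberger in $\cE_{k+1}[\nu]$. The generators $\ztau\ksup{k+1}g_\Gamma\ksup{k+1}$ are produced \emph{exclusively} by S-polynomials against the special element $\nu\ztau\ksup{k+1}-\ztau\ksup{k+1}$ (Proposition~\ref{nutopone} and the flowchart in Figure~\ref{flowchart}); the S-polynomials among the $\nu g_\Gamma\ksup{k}$ themselves all reduce to \emph{zero} within the working basis $\cG'$ (Lemma~\ref{subsetoverlap}). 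Your sketch omits the $\nu$ technique entirely and misidentifies the source of the new generators, so as written it does not give a mechanism for ``extracting'' the colon ideal. The subset combinatorics you anticipate (unions, intersections, complements) does appear, but in the service of showing those S-polynomials reduce to zero (Proposition~\ref{ksubsetoverlap}), not of manufacturing the quotient generators.
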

 
\begin{cor*}[Restatement of Corollary~\ref{idealqcorintro}]
Let $\ztau$ be the product $\ztau\ksup{1}\cdots\ztau\ksup{b-1}$.  The following ideals are equal in $\cE(G_{\widehat{\sigma}})$
 $$\pi(Q(G_\sigma)) : (\ztau) = N(G_{\widehat{\sigma}}).$$
\end{cor*}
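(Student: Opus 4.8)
The plan is to deduce Corollary~\ref{idealqcorintro} from Theorem~\ref{idealqthmintro} by telescoping, after a preliminary reduction. Because the generators of the quadratic ideal are attached to individual thick edges and bivalent vertices, we have $\pi_k(Q_k)=Q_{k+1}$ for all $k$, so $\pi\bigl(Q(G_\sigma)\bigr)=Q_{b-1}=Q(G_{\widehat\sigma})$; and $N(G_\sigma)=Q(G_\sigma)$ since $G_\sigma$ has no closed strands (\cite[Proposition~5.4]{manolescucube}, \cite[Proposition~3.1.1]{gilmorethesis}). Thus $\pi(Q(G_\sigma))=\pi(N(G_\sigma))$, and the corollary is equivalent to
\[
Q(G_{\widehat\sigma}):(\ztau)=N(G_{\widehat\sigma})\qquad\text{in }\cE(G_{\widehat\sigma}),
\]
that is, to the assertion that multiplication by $\ztau=\ztau\ksup{1}\cdots\ztau\ksup{b-1}$ is a well-defined injection $\cE(G_{\widehat\sigma})/N(G_{\widehat\sigma})\to\cE(G_{\widehat\sigma})/Q(G_{\widehat\sigma})$.

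For the containment $\ztau\,N(G_{\widehat\sigma})\subseteq Q(G_{\widehat\sigma})$ (well-definedness), set $\Pi_j=\pi_{b-2}\circ\cdots\circ\pi_j\colon\cE_j\to\cE_{b-1}$ (so $\Pi_{b-1}=\mathrm{id}$ and $\Pi_0=\pi$) and $M_j=\Pi_j(N_j)\subseteq\cE_{b-1}$; then $M_{b-1}=N(G_{\widehat\sigma})$, $M_0=\pi(N(G_\sigma))=Q(G_{\widehat\sigma})$, and $M_0\subseteq M_1\subseteq\cdots\subseteq M_{b-1}$ because $\pi_k(N_k)\subseteq N_{k+1}$. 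The ``$\supseteq$'' half of Theorem~\ref{idealqthmintro} says $\ztau\ksup{j+1}N_{j+1}\subseteq\pi_j(N_j)$ in $\cE_{j+1}$; applying the ring homomorphism $\Pi_{j+1}$, which fixes $\ztau\ksup{j+1}$, gives $\ztau\ksup{j+1}M_{j+1}\subseteq M_j$. Multiplying these inclusions in turn,
\[
\ztau\,N(G_{\widehat\sigma})=\ztau\ksup{1}\cdots\ztau\ksup{b-1}M_{b-1}\subseteq\ztau\ksup{1}\cdots\ztau\ksup{b-2}M_{b-2}\subseteq\cdots\subseteq M_0=Q(G_{\widehat\sigma}).
\]

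For the reverse containment $Q(G_{\widehat\sigma}):(\ztau)\subseteq N(G_{\widehat\sigma})$ (injectivity), it suffices, since $Q(G_{\widehat\sigma})\subseteq N(G_{\widehat\sigma})$, to show that $\ztau$ is a nonzerodivisor modulo $N(G_{\widehat\sigma})$, for then $Q(G_{\widehat\sigma}):(\ztau)\subseteq N(G_{\widehat\sigma}):(\ztau)=N(G_{\widehat\sigma})$. Equivalently, using the chain $M_0\subseteq\cdots\subseteq M_{b-1}$, it is enough to prove $M_j:(\ztau\ksup{j+1})=M_{j+1}$ for each $j$, since then $Q(G_{\widehat\sigma}):(\ztau)=M_0:(\ztau\ksup{1}\cdots\ztau\ksup{b-1})=\bigl(M_0:(\ztau\ksup{1})\bigr):(\ztau\ksup{2}\cdots\ztau\ksup{b-1})=\cdots=M_{b-1}$. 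The inclusion $M_{j+1}\subseteq M_j:(\ztau\ksup{j+1})$ is the previous paragraph; the reverse one amounts to commuting the composite projection $\Pi_{j+1}$ past the ideal quotient by $\ztau\ksup{j+1}$ applied to $\pi_j(N_j)$. Since $\Pi_{j+1}$ is a quotient by the closure relations $\ztau\ksup{i}-t^{a_i}\zbeta\ksup{i}$ for $i>j+1$, each of which is, up to a unit, monic in the boundary-edge variable $\zbeta\ksup{i}$ (these variables being pairwise distinct and absent from every other closure relation), this commutation reduces to checking that those relations form a regular sequence modulo $\pi_j(N_j)+(\ztau\ksup{j+1})$; granting that, the elementary identity $\bigl(I+(g)\bigr):(f)=\bigl(I:(f)\bigr)+(g)$ for $g$ a nonzerodivisor mod $I+(f)$, applied one relation at a time together with Theorem~\ref{idealqthmintro}, yields $M_j:(\ztau\ksup{j+1})=M_{j+1}$.

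The hard part is exactly this last input. The reductions above are formal, but the injectivity hinges on the projections $\pi_k$ being flat enough along the ideals $N_k$ — equivalently, on $\ztau$ being a nonzerodivisor modulo $N(G_{\widehat\sigma})$, or on the regular-sequence statement just mentioned. In Examples~\ref{onecrossingunknot} and~\ref{twocrossingexample} the ring $\cE/N$ turns out to be a polynomial ring in the boundary variables, which suggests that $\cE(G)/N(G)$ is Cohen--Macaulay with the boundary and closure variables forming part of a system of parameters; that would settle the matter. Establishing this in general — by adapting the depth argument behind \cite[Theorem~3.1]{ozsszcube}, or directly from a Gr\"obner basis for $N(G_{\widehat\sigma})$ — is the one nonroutine step.
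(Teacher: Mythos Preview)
Your proof is incomplete, and you say so yourself: the final paragraph concedes that the regular-sequence condition (equivalently, that $\ztau$ is a nonzerodivisor modulo $N(G_{\widehat\sigma})$) is ``the one nonroutine step'' and is not established. Everything you do before that point is correct and carefully argued---the reformulation $\pi(Q(G_\sigma))=Q(G_{\widehat\sigma})$, the telescoping proof of $\ztau\,N_{b-1}\subseteq Q_{b-1}$, the reduction of the reverse inclusion to $M_j:(\ztau\ksup{j+1})=M_{j+1}$, and the identity $(I+(g)):(f)=(I:(f))+(g)$ under the stated nonzerodivisor hypothesis---but the argument stops precisely where the content is. Appeals to Examples~\ref{onecrossingunknot}--\ref{twocrossingexample} and to a hoped-for Cohen--Macaulay structure are not a proof.

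By contrast, the paper's proof is two lines: it invokes $Q(G_\sigma)=N(G_\sigma)$ and the identity $I:(xy)=(I:(x)):(y)$ and stops. You have put your finger on a real subtlety that the paper glosses over. Iterating Theorem~\ref{idealqthmintro} literally produces $N_{b-1}$ by \emph{alternating} projections and colon operations---apply $\pi_0$, colon by $\ztau\ksup{1}$, apply $\pi_1$, colon by $\ztau\ksup{2}$, and so on---whereas the corollary performs all projections first and then a single colon by the product. These agree only if coloning by $\ztau\ksup{j+1}$ commutes with the later projections $\Pi_{j+1}$, i.e., only if your $M_j:(\ztau\ksup{j+1})=M_{j+1}$ holds. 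The paper's brief proof does not address this commutation; you have correctly isolated it but not discharged it. In short: the paper treats the corollary as immediate, you treat it as requiring a nonzerodivisor statement you cannot supply, and neither text actually closes the gap you have identified.
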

 \begin{proof}
The corollary follows immediately from the facts that $Q(G_\sigma)=N(G_\sigma)$ and that $I:(xy)=(I:(x)):(y)$ for any ideal $I$, ring $R$, and ring elements $x,y\in R$.
 \end{proof}

We will prove Theorem~\ref{idealqthmintro} in detail when $G_\sigma$ is blackboard framed, then describe in Section~\ref{nonbbframings} how to modify the proof to handle non-blackboard framings. No substantive changes to the proof are required; the restriction to blackboard framings merely simplifies the notation.
 
\section{Background: Gr\"obner bases and Buchberger's Algorithm}
\label{gbbackground}

We approach Theorem~\ref{idealqthmintro} as a commutative algebra calculation: given generating sets for two ideals in a polynomial ring, create a generating set for their ideal quotient. In fact, we would like to re-create a previously specified generating set. Gr\"obner bases are a convenient tool for this sort of calculation.  They make it possible to generalize sensibly the division algorithm for single-variable polynomials to a division algorithm for multivariable polynomials, thereby reducing certain difficult questions in commutative algebra and algebraic geometry to computational problems. Gr\"obner bases are the foundation of computer algebra programs that do commutative algebra in polynomial rings, such as Macaulay 2~\cite{macaulay2}.  In this section, we define Gr\"obner bases, describe an algorithm for converting an arbitrary generating set for an ideal into a Gr\"obner basis, and explain how Gr\"obner bases can be used to calculate generating sets for ideal intersections and quotients. The exposition here is an adaptation of that in~\cite{gbbook}.

\subsection{Monomial orders}

Let $\itk$ be a field and $\itk[x_0,\ldots,x_n]=\kpoly$ a polynomial ring over it. 
\begin{dfn}
A \emph{monomial order} is a total ordering of the monomials $x_0^{\alpha_0}\cdots x_n^{\alpha_n}$ in $\kpoly$ that satisfies 
\begin{enumerate}
\item $1<x_0^{\alpha_0}\cdots x_n^{\alpha_n}$ for all monomials with $\alpha_i$ not all zero, and
\item $y<y^\prime$ implies $yz<y^\prime z$ for any monomials $y, y^\prime, z$ in $\kpoly$.
\end{enumerate}
\end{dfn}
We will use the lexicographic ordering on $\kpoly$ in which $x_0>x_1>\cdots>x_n>1$. This means that $x_0^{\alpha_0}\cdots x_n^{\alpha_n}>x_0^{\beta_0}\cdots x_n^{\beta_n}$ when $\alpha_i>\beta_i$ for the first $i$ at which the exponents differ. The largest monomial is written first. For example, the following polynomials are written correctly with respect to the lexicographic term order.
\begin{center}
\begin{tabular}{ccc}
$f_1=x_1^2x_2-x_1x_2^2$ & $f_2=2x_1-x_2x_3x_4$ & $f_3=x_5+4x_6^3-1$
\end{tabular}
\end{center}
Throughout the remaining sections, we will write polynomials with respect to the lexicographic order unless we specify otherwise. 

Given a monomial order,  denote the leading term and the leading monomial of a polynomial $f\in\kpoly$ by $\lt{f}$ and $\lm{f}$ respectively. For example, $\lt{f_2}=2x_1$ and $\lm{f_2}=x_1$. For blackboard framed graphs, there will be no difference between leading terms and leading monomials because our coefficients are always $\pm 1$. When we deal with polynomials that have only two terms, we will denote the trailing term (i.e.~the non-leading term) by $\trt{f}$ and the trailing monomial by $\trm{f}$.

\subsection{Gr\"obner bases and the division algorithm}

A Gr\"obner basis is a generating set for an ideal that accounts for all possible leading monomials of polynomials in that ideal.
\begin{dfn}
A Gr\"obner basis for an ideal $I\subset \kpoly$ is a set of polynomials $g_1,\ldots,g_k$ in $I$ such that for any $f\in I$, there is some $i$ for which $\lm{g_i}$ divides $\lm{f}$.
\end{dfn}
\noi It follows from the Hilbert Basis Theorem and a few basic observations that every nonzero ideal in $\kpoly$ has a Gr\"obner basis~\cite[Corollary 1.6.5]{gbbook}. Gr\"obner bases are not unique and are typically highly redundant; an ideal typically has a smaller generating set that is not a Gr\"obner basis.

The key advantage of Gr\"obner bases over other generating sets is that they make it possible to generalize the division algorithm to multivariable polynomials in a useful way. Generalizing the algorithm is straightforward enough: To divide $f$ by $g$ in $\kpoly$, we see whether $\lm{g}$ divides $\lm{f}$. If it does, we record $\lt{f} / \lt{g}$ as a term of the quotient and replace $f$ by $f-\frac{\lt{f}}{\lt{g}}g$. If not, we record $\lt{f}$ as a term in the remainder and replace $f$ with $f-\lt{f}$. Continuing this process as long as possible, we eventually obtain a decomposition of $f$ as $f=qg+r$ for some $q, r\in\kpoly$. We may also divide $f$ by a collection of polynomials $g_1,\ldots,g_k$ to obtain a decomposition $f=q_1g_1+\cdots+q_kg_k+r$.  At each step, we look for the first $i$ such that $\lm{g_i}$ divides $\lm{f}$, then record $\lt{f}/\lt{g_i}$ as a term in the quotient $q_i$ and replace $f$ by $f-\frac{\lt{f}}{\lt{g_i}}g_i$. If no $\lm{g_i}$ divides $\lm{f}$, then we record $\lt{f}$ as a term in the remainder $r$ and replace $f$ with $f-\lt{f}$. We will write $$f\xrightarrow{g_1,\ldots,g_k}r$$ and say ``$f$ reduces to $r$ via $g_1,\ldots,g_k$'' if $r$ is obtained as a remainder when using this algorithm to divide $f$ by $g_1,\ldots,g_k$.

In general, the result of this generalized division algorithm depends on the monomial order chosen on $\kpoly$ and the order in which the polynomials $g_1,\ldots,g_k$ are listed. Neither the quotients $q_1,\ldots,q_k$ nor the remainder $r$ are unique. Consequently, this generalized division algorithm on its own is of little use. It is not true, for example, that the remainder $r$ is zero if and only if $f$ is in the ideal generated by $g_1,\ldots,g_k$. 

However, if $g_1,\ldots,g_k$ are a Gr\"obner basis for the ideal they generate, then the remainder $r$ is unique: it does not depend on the monomial order or on the order in which the $g_i$ are listed. The quotients are still not unique, but the uniqueness of the remainder is sufficient to make the generalized division algorithm useful for commutative algebra computations. For instance, if $g_1,\ldots,g_k$ are a Gr\"obner basis for the ideal they generate, then $f\in (g_1,\ldots,g_k)$ if and only if $f$ reduces to zero via $g_1,\ldots,g_k$.

\subsection{Buchberger's Algorithm and ideal quotients}

Buchberger~\cite{buch1} developed an algorithm for converting any generating set of an ideal into a Gr\"obner basis. Such an algorithm must produce new generators that account for leading monomials of polynomials in the ideal that did not appear as leading monomials among the original generators. New leading monomials arise when a linear combination of existing generators causes their leading terms to cancel. Buchberger's Algorithm systematically produces these cancellations using S-polynomials.
\begin{dfn}
\label{spolydfn}
The \emph{S-polynomial} of two non-zero polynomials $f,g\in\kpoly$ is 
\begin{align*}
S(f,g)&=\frac{\lcm{\lm{f},\lm{g}}}{\lt{f}}f-\frac{\lcm{\lm{f},\lm{g}}}{\lt{g}}g\\
&=\frac{\lm{g}}{\lc{f}\gcd(\lm{f},\lm{g})}\overf-\frac{\lm{f}}{\lc{g}\gcd(\lm{f},\lm{g})}\overg,
\end{align*}
where $\overf=f-\lt{f}$ and $\overg=g-\lt{g}$.
\end{dfn}
\noi  If $\lt{f}=\lm{f}$ and $\lt{g}=\lm{g}$, we have the simplification
\begin{align*}
S(f,g)=\frac{\lm{g}}{\gcd(\lm{f},\lm{g})}\overf-\frac{\lm{f}}{\gcd(\lm{f},\lm{g})}\overg.
\end{align*}

Buchberger's theorem~\cite[Theorem 1.7.4]{gbbook} is that a generating set $g_1,\ldots,g_k$ for an ideal $I\subset\kpoly$ is a Gr\"obner basis for $I$ if and only if $S(g_i,g_j)\xrightarrow{g_1,\ldots,g_k}0$ for all $i\neq j$. Buchberger's Algorithm, then, is as follows. 

\begin{algorithm}[Buchberger]
\label{buchalg}
\noindent Let $g_1,\ldots,g_k$ be a generating set for an ideal $I\subset\kpoly$.
\begin{enumerate}
\item Compute $S(g_i,g_j)$ for some $i\neq j$ and attempt to reduce it via $g_1,\ldots,g_k$ using the generalized division algorithm.
\item If $S(g_i,g_j)\xrightarrow{g_1,\ldots,g_k}0$, go back to the previous step and compute a different S-polynomial. If $S(g_i,g_j)\xrightarrow{g_1,\ldots,g_k}r$ and $r\neq 0$, then add $r$ to a working basis.
\item Repeat the previous two steps until a basis $g_1,\ldots,g_{k+s}$ is obtained for which $S(g_i,g_j)\xrightarrow{g_1,\ldots,g_{k+s}}0$ for all $i\neq j$.
\end{enumerate}
\end{algorithm}
\noindent Buchberger~\cite{buch1} proved that this algorithm terminates and produces a Gr\"obner basis for $I$.

Theorem~\ref{idealqthmintro} claims that the process of closing a braid strand corresponds to taking an ideal quotient of the non-local ideal.
\begin{dfn}
\label{iqdfn}
Let $I,J$ be ideals in a ring $R$. The \emph{ideal quotient of $I$ by $J$} is 
$$I:J=\{r\in R\,\vert\,rJ\subset I\}.$$ Note that $I$ is always contained in $I\!:\!J$.
\end{dfn}
We will use Buchberger's Algorithm to produce an explicit generating set for the ideal quotient $\pi_k\!\left(N\!\left(G_\sigma\ksup{k}\right)\right):\left(\ztau\ksup{k+1}\right)$. It will be readily recognizable as the generating set by which $N\!\left(G_{\sigma}\ksup{k+1}\right)$ was defined. First, we produce a generating set for the intersection $\pi_k\!\left(N\!\left(G_\sigma\ksup{k}\right)\right)\cap\left(\ztau\ksup{k+1}\right)$ in $\cE\!\left(G_\sigma\ksup{k+1}\right)$. The following straightforward proposition explains how a generating set for an intersection yields a generating set for a quotient. It is a rephrasing of \cite[Lemma 2.3.11]{gbbook}, for example.

\begin{prop}
Let $I\subset R$ be an ideal in a polynomial ring and $x\in R$. If $h_1,\ldots,h_k$ is a generating set for $I\cap (x)$, then $\frac{h_1}{x},\ldots,\frac{h_k}{x}$ is a generating set for $I:(x)$.
\end{prop}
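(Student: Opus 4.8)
The statement is the elementary fact that division by $x$ gives a bijection between a generating set of the intersection $I\cap(x)$ and a generating set of the quotient $I:(x)$, so the proof is a short ideal-membership argument with two inclusions; I expect no serious obstacle, and the only point requiring a word of care is that we are in a polynomial ring (hence a domain), which is exactly what lets us divide by $x$ unambiguously.

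First I would record the easy observations. Since each $h_j$ lies in $(x)$, we may write $h_j = x\cdot(h_j/x)$ with $h_j/x$ a well-defined element of $R$ (here using that $R$ is a domain, so the quotient is unique when it exists). Each $h_j/x$ satisfies $(h_j/x)\cdot x = h_j \in I$, hence $h_j/x \in I:(x)$ by Definition~\ref{iqdfn}; thus $\left(\tfrac{h_1}{x},\ldots,\tfrac{h_k}{x}\right)\subseteq I:(x)$.

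For the reverse inclusion, let $r \in I:(x)$. By definition $rx \in I$, and of course $rx \in (x)$, so $rx \in I\cap(x) = (h_1,\ldots,h_k)$. Write $rx = \sum_{j=1}^k a_j h_j$ for some $a_j \in R$, and substitute $h_j = x\cdot(h_j/x)$ to get $rx = x\sum_{j=1}^k a_j (h_j/x)$. Cancelling $x$ (again legitimate because $R$ is a domain and $x$ is the nonzero element we are quotienting by — if $x=0$ the statement is vacuous since $(x)=0$) yields $r = \sum_{j=1}^k a_j (h_j/x)$, so $r \in \left(\tfrac{h_1}{x},\ldots,\tfrac{h_k}{x}\right)$. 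Combining the two inclusions gives $I:(x) = \left(\tfrac{h_1}{x},\ldots,\tfrac{h_k}{x}\right)$, as claimed. The whole argument is formal; the ``hard part'' is purely bookkeeping, namely making sure the cancellation of $x$ is justified, which it is in any integral domain.
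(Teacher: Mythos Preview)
Your proof is correct and is exactly the standard argument one would give for this elementary fact. The paper itself does not supply a proof: it simply calls the proposition ``straightforward'' and notes that it is a rephrasing of \cite[Lemma~2.3.11]{gbbook}. Your two-inclusion argument, with the observation that cancellation of $x$ is valid because a polynomial ring over a domain is a domain, is precisely what that citation would unpack to.
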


To produce a Gr\"obner basis for an intersection, we follow the method prescribed in \cite[Proposition 2.3.5]{gbbook}. Suppose that $I,J\subset\kpolylong$ are ideals with generating sets $p_1,\ldots,p_k$ and $q_1,\ldots,q_\ell$ respectively. Enlarge the polynomial ring to include a dummy variable $\nu$. Define the monomial order on $\itk[x_0,\ldots,x_n,\nu]$ to be lexicographic with $\nu>x_0>\cdots>x_n>1$. (The lexicographic ordering is a special case of an ``elimination ordering,'' which is what is actually required for this procedure to work.)  Then 
$$I\cap J=(\nu I+(\nu-1)J)\cap\kpolylong$$ and a Gr\"obner basis for $I\cap J$ can be obtained from a Gr\"obner basis for $\nu I + (\nu-1)J$ by intersecting the basis with $\kpolylong$~\cite[Theorem 2.3.4]{gbbook}. Therefore, to obtain a basis for $I\cap J$, we apply Buchberger's Algorithm to the basis $$\nu p_1,\ldots,\nu p_k,(\nu-1)q_1,\ldots,(\nu-1)q_\ell,$$ then discard any generator in which $\nu$ appears.

In sum, we have the following algorithm for producing a Gr\"obner basis for the ideal quotient $I:(x)$ (where $x$ is a monomial) starting from a generating set $p_1,\ldots,p_k$ for $I$.
\begin{algorithm}
\label{iqalg}
\begin{enumerate}
\item[]{}
\item Apply Buchberger's Algorithm (Algorithm~\ref{buchalg}) to\\ $\displaystyle{\left\{\nu p_1,\ldots,\nu p_k,\nu x-x\right\}}$ in $\itk[x_0,\ldots,x_n,\nu]$ with an ordering in which $\nu>x_i$ for all $x_i$. Let $\left\{p_1,\ldots,p_{k+s}\right\}$ ($m\geq k$) be the output of Buchberger's Algorithm.
\item Intersect $\left\{p_1,\ldots,p_{k+s}\right\}$ with $\itk[x_0,\ldots,x_n]$. Let $\left\{p_1^\prime,\ldots,p_m^\prime\right\}$ be the resulting subset of generators.
\item Divide each of the $p_i^\prime$ by $x$. The set $\displaystyle{\left\{\frac{p_1^\prime}{x},\ldots,\frac{p_m^\prime}{x}\right\}}$ is a Gr\"obner basis for $I:(x)$.
\end{enumerate}
\end{algorithm}

\subsection{Simplifying Gr\"obner basis computations}
\label{gbsimplify}
We record here a collection of propositions that will simplify computations encountered when applying Buchberger's Algorithm. 

\begin{prop}
\label{gbprinciplegcd}
Let $f,g\in\kpoly$. If $\gcd(\lm{f},\lm{g})=1$, then $S(f,g)\xrightarrow{f,g}0$.
\end{prop}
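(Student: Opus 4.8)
\emph{Proof proposal.} The plan is to make $S(f,g)$ completely explicit, then run the generalized division algorithm by hand and check that nothing is ever pushed into the remainder. First I would write $f=\lt{f}+\overf$ and $g=\lt{g}+\overg$. Since $\gcd(\lm{f},\lm{g})=1$ we have $\lcm{\lm{f},\lm{g}}=\lm{f}\lm{g}$, so Definition~\ref{spolydfn} specializes to $S(f,g)=\tfrac{\lm{g}}{\lc{f}}\overf-\tfrac{\lm{f}}{\lc{g}}\overg$. Substituting $\lm{f}=\tfrac{1}{\lc{f}}(f-\overf)$ and $\lm{g}=\tfrac{1}{\lc{g}}(g-\overg)$, the two occurrences of $\overf\,\overg$ cancel and one is left with
\[S(f,g)=\frac{1}{\lc{f}\,\lc{g}}\bigl(\overf\, g-\overg\, f\bigr).\]
An overall nonzero scalar does not affect whether a polynomial reduces to zero, so it suffices to prove $\overf\, g-\overg\, f\xrightarrow{f,g}0$.

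The crux is a divisibility fact, which is the one place the coprimality hypothesis is used. Every monomial occurring in $\overf$ is strictly smaller than $\lm{f}$, so $\lm{f}\nmid\lm{\overf}$; combined with $\gcd(\lm{f},\lm{g})=1$ this gives $\lm{f}\nmid\lm{\overf}\lm{g}$, because $\lm{f}\mid\lm{\overf}\lm{g}$ together with coprimality would force $\lm{f}\mid\lm{\overf}$, hence $\lm{\overf}\geq\lm{f}$. Symmetrically $\lm{g}\nmid\lm{\overg}\lm{f}$. In particular $\lm{\overf\, g}=\lm{\overf}\lm{g}$ and $\lm{\overg\, f}=\lm{\overg}\lm{f}$ are \emph{distinct}, so there is no cancellation among the leading terms of $\overf\, g$ and $\overg\, f$, and $\lm{\overf\, g-\overg\, f}$ is the larger of these two monomials.

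With this in hand I would prove, by induction on the total number of monomials appearing in $p$ and $q$, the slightly more general statement: whenever $p,q\in\kpoly$ satisfy $\lm{p}<\lm{f}$ and $\lm{q}<\lm{g}$, one has $pg-qf\xrightarrow{f,g}0$. The base case $p=q=0$ is trivial. Otherwise, the divisibility fact (applied to $p,q$ in place of $\overf,\overg$) shows $\lm{pg}\neq\lm{qf}$. If $\lm{pg}>\lm{qf}$ (including the case $q=0$), then $\lm{f}\nmid\lm{pg-qf}$ while $\lm{g}\mid\lm{pg-qf}$, so the division algorithm reduces via $g$: it subtracts $\lt{p}\cdot g$ and leaves $(p-\lt{p})g-qf$. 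If instead $\lm{qf}>\lm{pg}$ (including $p=0$), it reduces via $f$, leaving $pg-(q-\lt{q})f$. In either case the new polynomial has the same shape, satisfies the same degree hypotheses, and has strictly fewer tail monomials, so the induction hypothesis applies; the process terminates at $0$ and never contributes to the remainder. Taking $p=\overf$, $q=\overg$ (and dividing off the scalar $\tfrac{1}{\lc{f}\lc{g}}$) completes the proof.

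The only real content is the divisibility fact of the second paragraph; everything else is bookkeeping. The mildly delicate point to get right is verifying that after one reduction step the current polynomial is genuinely still of the form $p'g-q'f$ with $\lm{p'}<\lm{f}$ and $\lm{q'}<\lm{g}$ — in particular handling the leading coefficients correctly when $\lc{f},\lc{g}$ are not $\pm1$ — so that the inductive hypothesis is legitimately available. I do not expect any essential obstacle.
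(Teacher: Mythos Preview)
Your argument is correct and shares the same algebraic starting point as the paper's: both begin from the identity $S(f,g)=\tfrac{1}{\lc{f}\lc{g}}(\overf\,g-\overg\,f)$, and both reduce this expression back to zero using $f$ and $g$. The difference is in granularity. The paper performs the reduction in two bulk steps, subtracting the full polynomial multiples $\tfrac{\overf}{\lc{f}\lc{g}}g$ and then $-\tfrac{\overg}{\lc{f}\lc{g}}f$; this is literally the division algorithm only when $\overf$ and $\overg$ are single terms (which is the binomial case used throughout the paper). Your induction on the number of monomials in $p$ and $q$ carries out the same reduction one monomial at a time, so it is a genuine run of the division algorithm for arbitrary $f,g$. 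The key observation---that $\lm{f}\nmid\lm{p}\lm{g}$ and $\lm{g}\nmid\lm{q}\lm{f}$ whenever $\lm{p}<\lm{f}$ and $\lm{q}<\lm{g}$, so that the leading term of $pg-qf$ is always divisible by exactly one of $\lm{f},\lm{g}$ and never falls into the remainder---is exactly the point where the coprimality hypothesis enters, and you have it right. Your worry about tracking leading coefficients is unfounded: since $\lt{(pg-qf)}=\lt{p}\lt{g}$ in the first case, the division step subtracts precisely $\lt{p}\cdot g$, and the invariant $\lm{p'}<\lm{f}$, $\lm{q'}<\lm{g}$ is preserved.
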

\begin{proof}
Let $f=\lt{f}+\overf$ and $g=\lt{g}+\overg$. Then we can compute and reduce $S(f,g)$ as follows.  The two possible term orders are considered in two columns.
\begin{equation*}
\begin{aligned}
S(f,g)&=\frac{\lm{g}}{\lc{f}}\overf-\frac{\lm{f}}{\lc{g}}\overg\\
\textrm{reduce}\;&-\frac{\overf}{\lc{f}\lc{g}}(\lt{g}+\overg)\\
&=-\frac{\overg}{\lc{g}}\left(\lm{f}+\frac{\overf}{\lc{f}}\right)\\
\textrm{reduce}\;&+\frac{\overg}{\lc{f}\lc{g}} f\\
&=0
\end{aligned}
\quad\vline\quad
\begin{aligned}
S(f,g)&=-\frac{\lm{f}}{\lc{g}}\overg+\frac{\lm{g}}{\lc{f}}\overf\\
\textrm{reduce}\;&+\frac{\overg}{\lc{f}\lc{g}}(\lt{f}+\overf)\\
&=\frac{\overf}{\lc{f}}\left(\lm{g}+\frac{\overg}{\lc{g}}\right)\\
\textrm{reduce}\;&-\frac{\overf}{\lc{f}\lc{g}} g\\
&=0
\end{aligned}
\end{equation*}
\end{proof}

\begin{prop}
\label{gbprinciplecoeff}
Let $f=\lt{f}+\overf$ and $g=\lt{g}+\overg$ be polynomials in $\kpoly$. Let $a,b$ be monomials in $\kpoly$. Then $$S(af,ag)=aS(f,g).$$
If $\gcd(a,b)=\gcd(a,\lm{g})=\gcd(b,\lm{f})=1$, then $$S(af,bg)=abS(f,g).$$
If $\gcd(a,\lm{f})=1$, then $$S(af, a\lt{g}+\overg)=S(f,a\lt{g}+\overg).$$
\end{prop}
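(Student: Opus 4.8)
The three claims should all follow from Definition~\ref{spolydfn} by direct substitution, using only how leading terms and gcd's transform under multiplication by a monomial: for a monomial $a$ and nonzero $f\in\kpoly$ one has $\lm{af}=a\,\lm{f}$, $\lt{af}=a\,\lt{f}$, $\overline{af}=a\overf$, and for monomials $p,q,a$ one has $\gcd(ap,aq)=a\gcd(p,q)$ and $\lcm{ap,aq}=a\,\lcm{p,q}$. For the first identity, the plan is to plug $af$ and $ag$ into the first form of Definition~\ref{spolydfn}: since $\lcm{\lm{af},\lm{ag}}=a\,\lcm{\lm{f},\lm{g}}$, the coefficient $\lcm{\lm{af},\lm{ag}}/\lt{af}$ collapses to $\lcm{\lm{f},\lm{g}}/\lt{f}$, so the $f$-term of $S(af,ag)$ is $a\cdot\frac{\lcm{\lm{f},\lm{g}}}{\lt{f}}f$; the $g$-term is the same with $g$ in place of $f$, and factoring out $a$ gives $aS(f,g)$.

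For the second identity the only extra ingredient is the evaluation of $\gcd(a\lm{f},b\lm{g})$. From $\gcd(a,b)=1$ and $\gcd(a,\lm{g})=1$ I get $\gcd(a,b\lm{g})=1$, hence $\gcd(a\lm{f},b\lm{g})=\gcd(\lm{f},b\lm{g})$; then $\gcd(b,\lm{f})=1$ gives $\gcd(\lm{f},b\lm{g})=\gcd(\lm{f},\lm{g})$. Therefore $\lcm{a\lm{f},b\lm{g}}=ab\,\lcm{\lm{f},\lm{g}}$, and substituting into Definition~\ref{spolydfn} as before, the $f$-term becomes $\frac{ab\,\lcm{\lm{f},\lm{g}}}{a\lt{f}}(af)=ab\,\frac{\lcm{\lm{f},\lm{g}}}{\lt{f}}f$ and, symmetrically, the $g$-term becomes $ab\,\frac{\lcm{\lm{f},\lm{g}}}{\lt{g}}g$, so $S(af,bg)=ab\,S(f,g)$.

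For the third identity I would set $h=a\lt{g}+\overg$ and first check that $\lm{h}=a\lm{g}$: axiom (1) of a monomial order gives $1\le a$, and multiplying by $\lm{g}$ via axiom (2) gives $\lm{g}\le a\lm{g}$, so every monomial of $\overg$ (each strictly below $\lm{g}$) is strictly below $a\lm{g}$; hence $a\lt{g}$ cannot cancel against anything in $\overg$, and $\lm{h}=a\lm{g}$, $\lt{h}=a\lt{g}$, $\overline{h}=\overg$ (in particular $h\neq 0$). Next, $\gcd(a,\lm{f})=1$ yields $\gcd(\lm{f},a\lm{g})=\gcd(\lm{f},\lm{g})$, so $\lcm{\lm{f},a\lm{g}}=a\,\lcm{\lm{f},\lm{g}}=\lcm{a\lm{f},a\lm{g}}$; call this common monomial $L$. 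Then in both $S(af,h)$ and $S(f,h)$ the $h$-term is the identical expression $-\frac{L}{\lt{h}}h$, while the $f$-term of $S(af,h)$ is $\frac{L}{\lt{af}}(af)=\frac{L}{a\lt{f}}(af)=\frac{L}{\lt{f}}f$, exactly the $f$-term of $S(f,h)$; hence $S(af,h)=S(f,h)$.

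The only step that requires more than bookkeeping is pinning down $\lm{h}=a\lm{g}$ in the third part, and the point there is simply that a monomial factor can only enlarge a monomial under the order (axiom (1)); once that is noted, the equality of the two relevant lcm's is automatic and the rest is mechanical. The gcd manipulation in the second part is the other spot to be careful about, but it is routine, so I expect no genuine obstacle.
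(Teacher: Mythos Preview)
Your proposal is correct and follows essentially the same approach as the paper: both proofs are direct substitutions into Definition~\ref{spolydfn}, relying on the same gcd/lcm identities for monomials. The only cosmetic difference is that you work with the first form of the S-polynomial (the one with $\lcm{\lm{f},\lm{g}}$ and full $f,g$), while the paper computes with the second form (the one with $\gcd(\lm{f},\lm{g})$ and $\overf,\overg$); you are also more explicit than the paper in verifying $\lm{a\lt{g}+\overg}=a\lm{g}$ via the monomial order axioms, a point the paper takes for granted.
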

\begin{proof}
Since $\gcd(a\lm{f}, a\lm{g})=a\gcd(\lm{f}, \lm{g}),$ we compute as follows for the first claim.
\begin{eqnarray*}
S(af,ag)
&=&\frac{a\lm{g}}{\lc{f}a\gcd(\lm{f},\lm{g})}a\overf-\frac{a\lm{f}}{\lc{g}a\gcd(\lm{f},\lm{g})}a\overg\\
&=&a\left(\frac{\lm{g}}{\lc{f}\gcd(\lm{f},\lm{g})}\overf-\frac{\lm{f}}{\lc{g}\gcd(\lm{f},\lm{g})}\overg\right)\\&=&aS(f,g)
\end{eqnarray*}
The term order in the first two lines above may not be as written, but it is not changed by cancelling or factoring out $a$ from the expression.

For the second claim, our assumptions imply that $\gcd(af,bg)=\gcd(f,g)$. The computation is as follows.
\begin{align*}
S(af,bg)&=\frac{b\lm{g}}{\lc{f}\gcd(\lm{f},\lm{g})}a\overf-\frac{a\lm{f}}{\lc{g}\gcd(\lm{f},\lm{g})}b\overg\\
&=ab\left(\frac{\lm{g}}{\lc{f}\gcd(\lm{f},\lm{g})}\overf-\frac{\lm{f}}{\lc{g}\gcd(\lm{f},\lm{g})}\overg\right)\\
&=abS(f,g)
\end{align*}
Again, the term order may not be as written, but it does not change when we factor out $ab$.

For the third claim, the key observations are that \begin{align*}\gcd(a\lm{f},a\lm{g})&=a\gcd(\lm{f},\lm{g})\quad\text{and}\\\gcd(\lm{f},a\lm{g})&=\gcd(\lm{f},\lm{g})\end{align*} when $\gcd(a,\lm{f})=1$. Therefore, 
\begin{align*}
S(af, a\lt{g}+\overg)=&\frac{a\lm{g}}{\lc{f}a\gcd(\lm{f},\lm{g})}a\overf\\
&\,-\frac{a\lm{f}}{\lc{g}a\gcd(\lm{f},\lm{g})}\overg\\
=&\frac{a\lm{g}}{\lc{f}\gcd(\lm{f},a\lm{g})}\overf\\
&\,-\frac{\lm{f}}{\lc{g}\gcd(\lm{f},a\lm{g})}\overg\\
=&S(f, a\lt{g}+\overg).
\end{align*}
\end{proof}

We will sometimes encounter expressions with unknown term order after computing an S-polynomial. The following proposition allows us to reduce some such expressions without explicitly determining their term order.

\begin{prop}
\label{unorderedreduction}
Let $p, q, r, s\in\kpoly$ be monomials whose relationships to each other under the monomial order are unknown. Then whichever of $ps-rq$ or $rq-ps$ is correctly ordered is reducible to zero by the correctly ordered versions of $p-q$ and $r-s$.
\end{prop}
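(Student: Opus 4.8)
The plan is to dispose of this by a short case analysis on the possible monomial orderings of the four monomials $p,q,r,s$, using the two given binomials $p-q$ and $r-s$ (which I will assume are written in their correctly ordered forms, i.e.\ each has its larger monomial listed first). Write $u = ps - rq$ and note that $-u = rq - ps$; exactly one of these is the correctly ordered binomial, namely the one whose leading monomial is the larger of $ps$ and $rq$. Without loss of generality I would assume $ps > rq$, so that $u = ps - rq$ is correctly ordered with $\lt{u} = ps$ and $\trt{u} = -rq$. (The other case is symmetric, interchanging the roles of $p-q$ with $r-s$ and of the first with the second reduction step.)

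The key observation is that $p-q$ and $r-s$, as correctly ordered binomials, each have one monomial that is ``large'' and one that is ``small,'' but which is which depends on whether $p>q$ and whether $r>s$. First I would handle the two reductions one at a time. To reduce $u = ps - rq$, I divide by the correctly ordered version of $p-q$: if $p>q$ then $\lm{p-q}=p$ divides $\lm{u}=ps$, so one reduction step replaces $u$ by $u - s(p-q) = qs - rq = q(s-r)$; if instead $q>p$, then $p-q$ is written as $q-p$ reversed — wait, more carefully, the correctly ordered form is $q - p$ with leading monomial $q$, which need not divide $ps$, so I would instead arrange to reduce using $r-s$ first. The cleanest way to organize the argument is therefore to split on the ordering of $ps$ versus $rq$ together with the orderings within each pair, giving a small finite table; in each cell one checks that the first applicable reduction leaves a binomial that is a monomial multiple of the remaining generator, which then reduces to zero in one more step. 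Since $ps - rq = (ps - qs) + (qs - rq) = s(p-q) + q(s-r)$ identically (and likewise $= r(p-q)$-type regroupings), every regrouping exhibits $u$ as a combination of monomial multiples of $p-q$ and $r-s$, so the only question is whether the divisibility conditions on leading monomials line up at each step — and they do, precisely because $ps$ and $rq$ are the two monomials appearing, and one of $p\mid ps$, $s\mid ps$ always holds trivially while $\lm(p-q)\in\{p,q\}$ and $\lm(r-s)\in\{r,s\}$.

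The main (and only real) obstacle is bookkeeping: making sure the case split is genuinely exhaustive and that in the ``bad'' cells — where the first generator one might try has a leading monomial not dividing $\lm{u}$ — one can always fall back on the other generator, whose leading monomial does divide. I expect this to come down to the elementary fact that at least one of $\lm(p-q)$, $\lm(r-s)$ divides $\lm{u}$, which follows because $\lm{u}$ is either $ps$ or $rq$; say $\lm u = ps$, then $p \mid \lm u$ and $s \mid \lm u$, and since $\lm(p-q)\in\{p,q\}$ with $p\mid \lm u$, if $\lm(p-q)=p$ we use $p-q$, while if $\lm(p-q)=q$ we have $q > p$ and I would instead note $s\mid \lm u$ forces $\lm(r-s)\in\{r,s\}$ to divide $\lm u$ in the remaining subcase analysis. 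After the first step the residue is a single monomial multiple of the as-yet-unused generator by the regrouping identity, so a second step finishes. This is exactly the pattern one would expect from Proposition~\ref{gbprinciplegcd}-style arguments, and no new ideas beyond careful case-tracking are needed.
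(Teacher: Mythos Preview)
Your approach is the same as the paper's, but you are missing the one clean observation that makes the case analysis close, and your attempted justification in its place is not correct. You want the ``elementary fact that at least one of $\lm{p-q}$, $\lm{r-s}$ divides $\lm{u}$,'' and you try to justify it by saying that when $\lm{p-q}=q$ (i.e.\ $q>p$), the divisibility $s\mid ps$ ``forces $\lm{r-s}\in\{r,s\}$ to divide $\lm{u}$.'' That is a non sequitur: if $\lm{r-s}=r$, there is no reason for $r$ to divide $ps$, so nothing in what you wrote rules this case out.

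What actually rules it out is multiplicativity of the monomial order: if $ps>rq$ then it is impossible to have both $q>p$ and $r>s$, since those would give $rq>sp$. Hence $ps>rq$ forces $p>q$ or $s>r$; WLOG $p>q$, reduce $ps-rq$ by $s(p-q)$ to get $q(s-r)$ (or $q(r-s)$, whichever is correctly ordered), and then reduce by the correctly ordered version of $r-s$. This is exactly the paper's argument, and once you insert this single sentence your plan works without any further subcase table.
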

\begin{proof}
Suppose that $ps-rq$ is correctly ordered, so $ps>rq$. Then either $p>q$ or $s>r$ or both. Assume without loss of generality that $p>q$. Then term orders are correct in the following computation.
\begin{align*}
&ps-rq\\
\text{reduce:}\quad-\,&s(p-q)\\
=\,&q(s-r) \text{ or } q(r-s)
\end{align*}
The term order in the last line depends on whether $r<s$ or $s<r$. Either way, the last expression reduces by the version of $r-s$ with the correct term order.

If instead $rq-ps$ is correctly ordered, then either $q>p$ or $r>s$ or both. Without loss of generality, assume $q>p$. Reduce by $q-p$ to get $p(r-s)$ or $p(s-r)$ depending on which term order is correct for $r-s$. Either way, the result reduces by the correctly ordered version of $r-s$. 
\end{proof}

\section{Outline of Proof of Theorem~\ref{idealqthmintro}}
\label{outlinesec}

Gr\"obner bases and Buchberger's Algorithm offer a concrete, constructive approach to our claim that the non-local ideal arises as an ideal quotient. With a carefully chosen monomial order, the computations and reductions of S-polynomials prescribed by Buchberger's Algorithm actually produce exactly the defining generators of the non-local ideal for the braid graph with one additional strand closed. Moreover, it is possible to interpret all S-polynomial computations required by Buchberger's Algorithm with reference to the graph, and thereby ensure that the algorithm produces no extraneous generators for the ideal quotient.

This section outlines the proof of Theorem~\ref{idealqthmintro} via Algorithm~\ref{iqalg}. Algorithm~\ref{iqalg} calls for an application of Buchberger's Algorithm (Algorithm~\ref{buchalg}) to a generating set for $\pi_k\!\left(N\!\left(G_\sigma\ksup{k}\right)\right)\subset\cE\!\left(G_\sigma\ksup{k+1}\right)\!\left[\nu\right]$, so we begin in Sections~\ref{mosec} and~\ref{startingbasissec} by setting up notation to describe such a set and defining a monomial order on $\cE\!\left(G_\sigma\ksup{k+1}\right)\!\left[\nu\right]$. We then describe how Buchberger's Algorithm progresses (Section~\ref{bbrd1ovw}) and the output it produces (Lemma~\ref{buchend}). We go on to prove Theorem~\ref{idealqthmintro} from Lemma~\ref{buchend} in Section~\ref{lemmatothmsec}. In Section~\ref{examplesection}, we work out a detailed example illustrating the algorithm for a particular small graph.

\subsection{Notation and a Monomial Order}
\label{mosec}

We will use the notation first introduced in Section~\ref{mainresult} and shown in Figure~\ref{intermedgraphs}, but drop all reference to $\sigma$. Refer also to Section~\ref{examplesection} and Figure~\ref{smallexample} for a specific example. Let $G=G\ksup{0}$ be the graph obtained by from a non-closed braid diagram with $b$ strands by replacing crossings with thick edges. Let $G\ksup{1},\ldots,G\ksup{b-1}$ be the intermediary graphs obtained by closing strands of $G$ one at a time from right to left. Assume that $G$ carries some framing, and that the $G\ksup{i}$ inherit this framing with newly closed strands bearing the sum of the framings on the edges that formed them.

We have already fixed the isomorphisms $\cE\!\left(G\ksup{i}\right)\isom\cR\!\left[\underline{x}\!\left(G\ksup{i}\right)\right]$ that retain the edge labels in $\underline{x}\!\left(G\ksup{i}\right)$ and discard the edge labels in $\underline{y}\!\left(G\ksup{i}\right)$. Recall that we abbreviate $\cE\!\left(G\ksup{i}\right)$ by $\cE_i$. Let $\underline{x}(G)=\left(x_0,\ldots,x_n\right)$ be the edges in $G$, labeled from top to bottom, right to left. Recall that $\ztau\ksup{i}$ and $\zbeta\ksup{i}$ are additional labels for the top- and bottom-most edges on the $i^\text{th}$ strand of $G$ after the variables in $\underline{y}(G)$ have been discarded. Recall also that $a_{i}$ is the framing on the top boundary edge of the $i^\text{th}$ strand of $G$, and $\pi_i$ is the quotient map with kernel $Z_{i+1}=\left(\ztau\ksup{i+1}-t^{a_{i+1}}\zbeta\ksup{i+1}\right)$ that retains the edge label $\ztau\ksup{i+1}$ and discards $\zbeta\ksup{i+1}$.

Consider the closure of the $(k+1)^\text{st}$ strand of $G\ksup{k}$ as in Theorem~\ref{idealqthmintro}. The diagrams $G\ksup{k}$ and $G\ksup{k+1}$ inherit their edge labels from $G$ in accordance with our conventions for the isomorphisms $\pi_i$ that relate $\cE_{i}$ and $\cE_{i+1}$.  Under these conventions, $\cE_{k+1}$ is the polynomial ring over $\cR$ with indeterminates $\ztau\ksup{1},\ldots,\ztau\ksup{b}$, $\zbeta\ksup{k+2},\ldots,\zbeta\ksup{b}$, and an appropriate proper subset of the original $x_0,\ldots,x_n$.

To implement Algorithm~\ref{iqalg}, we require a monomial order on $\cE_{k+1}[\nu]$. The ordering we employ relies crucially on the edge labeling conventions specified above.

\begin{dfn}
\label{edgeringmo}
Let $x_0,\ldots,x_n$ label the edges of $G$ from top to bottom, right to left. Let $\ztau\ksup{i}$ and $\zbeta\ksup{i}$ be additional labels for the top- and bottom-most remaining edges on the $i^\text{th}$ strand for $0\leq i\leq b-1$. For each $i\leq k+1$, label the closure edges of $G\ksup{k+1}$ with $\ztau\ksup{i}$ and discard the label $\zbeta\ksup{i}$. The monomial order on $\cE_{k+1}[\nu]$ is the lexicographic ordering with $\nu>\ztau\ksup{k+1}>x_i>1$ for all $i$ and $x_i>x_j$ when $i<j$.
\end{dfn}

The property that $\ztau\ksup{k+1}$ precedes all other edge variables in the monomial order on $\cE_{k+1}[\nu]$ allows us to relate divisibility by $\ztau\ksup{k+1}$ to determination of a polynomial's leading term. In diagrammatic terms, divisibility by $\ztau\ksup{k+1}$ encodes the relationship of a subset to the braid strand being closed. This connection between the monomial order and the braid diagram is what makes it possible to keep the size and composition of our Gr\"obner basis under control, which is ultimately what allows us to describe the process and outcome of Buchberger's Algorithm.

\begin{obs}
\label{MOanddiv}
Let $f\in\cE_{k+1}[\nu]$ be a polynomial with no term divisible by $\nu$. If $\ztau\ksup{k+1}$ divides exactly one term of $f$, then the term divisible by $\ztau\ksup{k+1}$ must be the leading term of $f$. 
\end{obs}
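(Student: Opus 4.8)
The statement is essentially immediate from the definition of the lexicographic order in Definition~\ref{edgeringmo}, so the proof is really just an unwinding of that definition. The plan is as follows. First I would write $f=\sum_{\alpha}c_\alpha m_\alpha$ as a sum of distinct monomials $m_\alpha$ in the polynomial variables $\nu,\ztau\ksup{k+1},x_0,\ldots,x_n$ with nonzero coefficients $c_\alpha\in\cR$. By hypothesis $\nu$ divides none of the $m_\alpha$, and exactly one of them --- call it $m_{\alpha_0}$ --- is divisible by $\ztau\ksup{k+1}$, while every other $m_\alpha$ has $\ztau\ksup{k+1}$-exponent zero.

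Then I would compare $m_{\alpha_0}$ with an arbitrary $m_\alpha$ for $\alpha\neq\alpha_0$ using the lexicographic order with $\nu>\ztau\ksup{k+1}>x_0>\cdots>x_n$: the $\nu$-exponents of the two monomials are both zero, so the comparison passes to the next variable, $\ztau\ksup{k+1}$, where the exponent of $m_{\alpha_0}$ is at least $1$ while that of $m_\alpha$ is $0$; hence $m_{\alpha_0}>m_\alpha$. Since this holds for every $\alpha\neq\alpha_0$, the monomial $m_{\alpha_0}$ is the leading monomial of $f$ and $c_{\alpha_0}m_{\alpha_0}=\lt{f}$, which is exactly the claim. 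There is no real obstacle here; the only points worth a sentence are that the coefficient ring $\cR$ is irrelevant, since leading terms depend only on the monomial order on the polynomial variables, and that $\nu$, being absent from $f$, contributes equal (zero) exponents and so is simply skipped in the lexicographic comparison --- which is precisely why the hypothesis that no term of $f$ is divisible by $\nu$ is needed (otherwise a $\nu$-divisible term could outrank the $\ztau\ksup{k+1}$-divisible one).
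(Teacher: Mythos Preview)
Your argument is correct and is exactly the same as the paper's: both simply observe that in the lexicographic order with $\nu>\ztau\ksup{k+1}>x_i$, once $\nu$ is absent the variable $\ztau\ksup{k+1}$ is the first one consulted, so the unique term containing it must lead. Your write-up just unpacks this in slightly more detail than the paper's one-sentence justification.
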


The observation follows immediately from the requirement that the monomial order satisfy $\ztau\ksup{k+1}>x_i$ for all $x_i$. With this requirement, only a term divisible by $\nu$ could precede a term divisible by $\ztau\ksup{k+1}$. In the absence of $\nu$, we look to $\ztau\ksup{k+1}$ to determine the leading term. If it occurs in only one term, then that term must lead. Any monomial order for which Observation~\ref{MOanddiv} holds could be used to prove Theorem~\ref{idealqthmintro} in the same way. We have specified the lexicographic ordering on the remaining variables simply for concreteness. 

\subsection{Input and output bases and the monomial order}
\label{startingbasissec}

Let $N_i\subset\cE_i$ be the non-local ideal defined in Section~\ref{idealdfnsec}. In our abbreviated notation, Theorem~\ref{idealqthmintro} claims that $$\pi_k\!\left(N_k\right):\left(\ztau\ksup{k+1}\right) = N_{k+1}$$ in $\cE_{k+1}$ for $0\leq k\leq b-2$.

To implement Algorithm~\ref{iqalg}, we first require a generating set for $\pi_k(N_k)\subset\cE_{k+1}$.  We will obtain it from the generating set of $N_k\subset\cE_k$ specified in Section~\ref{idealdfnsec}. Let $h_\gm\ksup{k}$ denote the generator of $N_k$ associated to a subset $\gm$ of the thick edges and bivalent vertices in $G$. Let $g_\gm\ksup{k}=\pi_k\!\left(h_\gm\ksup{k}\right)$ denote its image in $N_{k+1}$. Let $g_\gm\ksup{k+1}$ denote the generator of $N_{k+1}$ associated to $\gm$. Then our input basis for Algorithm~\ref{iqalg} is the set of  $g_\gm\ksup{k}$ for all subsets $\gm$, which means our starting basis for Buchberger's Algorithm is 
\begin{equation}
\label{startingbasis}
\cG_0=\left\{\nu g_\gm\ksup{k}\,\vert\,\gm\subset G\right\}\cup\left\{\nu\ztau\ksup{k+1}-\ztau\ksup{k+1}\right\},
\end{equation}
as a set of polynomials in $\cE_{k+1}[\nu]$. We will prove Theorem~\ref{idealqthmintro} by showing that the output of Algorithm~\ref{iqalg} is 
\begin{equation}
\label{endingbasis}
\cG_{\text{end}}=\left\{g_\gm\ksup{k+1}\,\vert\,\gm\subset G\right\},
\end{equation}
which is the defining basis for $N_{k+1}$. (We abuse notation slightly throughout by letting the words ``in $G$'' and the symbols ``$\subset G$'' mean ``is a subset of the thick edges and bivalent vertices in $G$.'')

We now analyze the polynomials $g_\gm\ksup{k}$ and $g_\gm\ksup{k+1}$ in greater detail, particularly with respect to the monomial order on $\cE_{k+1}[\nu]$. For simplicity, we will assume from now until Section~\ref{nonbbframings} that the graphs $G\ksup{i}$ are blackboard-framed. 

Given subsets $\gm$ and $\dl$ in $G$, let $\xsub{\gm}{\dl}$ be the product of interior edges in $G=G\ksup{0}$ from $\gm$ to $\dl$. Let $\zsub{\gm}{\dl}\ksup{i}$ denote the product of closure edges $\ztau\ksup{j}$ that go from $\gm$ to $\dl$ in $G\ksup{i}$, $\zsub{\gm}{\tau}\ksup{i}$ denote the product of edges $\ztau\ksup{j}$ that go from $\gm$ to the top boundary of $G\ksup{i}$, and $\zsub{\beta}{\gm}\ksup{i}$ denote the product of $\zbeta\ksup{j}$ in $G\ksup{i}$ that go from the bottom boundary of $G\ksup{i}$ to $\gm$. Notice that the factors of $\zsub{\gm}{\dl}\ksup{i}$ come only from $\ztau\ksup{j}$ with indices $j\leq i$, while the factors of $\zsub{\gm}{\tau}\ksup{i}$ and $\zsub{\beta}{\gm}\ksup{i}$ come only from $\ztau\ksup{j}$ or $\zbeta\ksup{j}$ with indices $j>i$. 


Ignoring term orders for the moment, we may express generators of $N_k$ as
\begin{equation}
\label{ggmkdfn}
h_\gm\ksup{k}=\xsub{\gm}{G\sm\gm}\zsub{\gm}{G\sm\gm}\ksup{k}\zsub{\gm}{\tau}\ksup{k}-\xsub{G\sm\gm}{\gm}\zsub{G\sm\gm}{\gm}\ksup{k}\zsub{\beta}{\gm}\ksup{k}.
\end{equation}
The generator of $N_{k+1}$ associated to $\gm\subset G$ is 
\begin{equation}
\label{ggmkplusonedfn}
g_\gm\ksup{k+1}=\xsub{\gm}{G\sm\gm}\zsub{\gm}{G\sm\gm}\ksup{k+1}\zsub{\gm}{\tau}\ksup{k+1}-\xsub{G\sm\gm}{\gm}\zsub{G\sm\gm}{\gm}\ksup{k+1}\zsub{\beta}{\gm}\ksup{k+1}.
\end{equation}

\noindent The map $\pi_k$ replaces all instances of $\zbeta\ksup{k+1}$ with $\ztau\ksup{k+1}$. Let 
\begin{equation}
\label{anyzetadfn}
\anyzeta_\gm = \frac{\zsub{\gm}{\gm}\ksup{k+1}}{\zsub{\gm}{\gm}\ksup{k}}=\begin{cases} \ztau\ksup{k+1} & \text{ if } \ztau\ksup{k+1} \text{ is internal to } \gm \text{ in } G\ksup{k+1} \\ 1 & \text{ otherwise.}\end{cases}
\end{equation}
Then the generators of $\pi_k(N_k)$ and $N_{k+1}$ are related by
\begin{equation}
\label{ggmkvkplusone}
g_\gm\ksup{k}=\pi_k\!\left(h_\gm\ksup{k}\right)=\anyzeta_\gm g_\gm\ksup{k+1}.
\end{equation}

We write $g_\gm^{(i), \out}$ for the monomial of $g_\gm\ksup{i}$ that is a product of edges outgoing from $\gm$ and $g_\gm^{(i), \into}$ for the monomial of $g_\gm\ksup{i}$ that is a product of edges incoming to $\gm$. It follows from Equation~\ref{ggmkvkplusone} that $g_\gm\ksup{k}$ and $g_\gm\ksup{k+1}$ have the same term order with respect to the monomial order on $\cE_{k+1}[\nu]$. If this term order is determined by an outgoing edge variable, then we call $\gm$ \emph{out-led}. If it is determined by an incoming edge variable, we call $\gm$ \emph{in-led}. The labeling of $\gm$ as out-led or in-led depends on $k$ because the monomial ordering depends on $k$. Since we work in $\cE_{k+1}$ throughout the proof of Theorem~\ref{idealqthmintro}, we will always mean in-led and out-led with respect to the term order on $\cE_{k+1}[\nu]$. See Section~\ref{examplesection} for examples of in-led and out-led subsets. 

\subsection{Algorithm Overview}
\label{bbrd1ovw}

We now outline the computations that occur as we run Buchberger's Algorithm on $\cG_0$. The flowchart in Figure~\ref{flowchart} summarizes the first round of the algorithm, which produces the basis $\cG^\prime$. In the second round, all S-polynomials reduce to zero within $\cG^\prime$ and the algorithm terminates. Lemma~\ref{buchend} describes the outcome of the algorithm.

Buchberger's Algorithm instructs us to compute S-polynomials among all of the generators in the initial basis $\cG_0$. Initially, this means we have two types of computations: S-polynomials between $\nutopk$ and the $\nu g_\gm\ksup{k}$ and S-polynomials among the $\nu g_\gm\ksup{k}$. These are handled in Sections~\ref{nutoponespolys} and~\ref{subsetspolys}, respectively. 

\begin{figure}
\begin{center}
\input{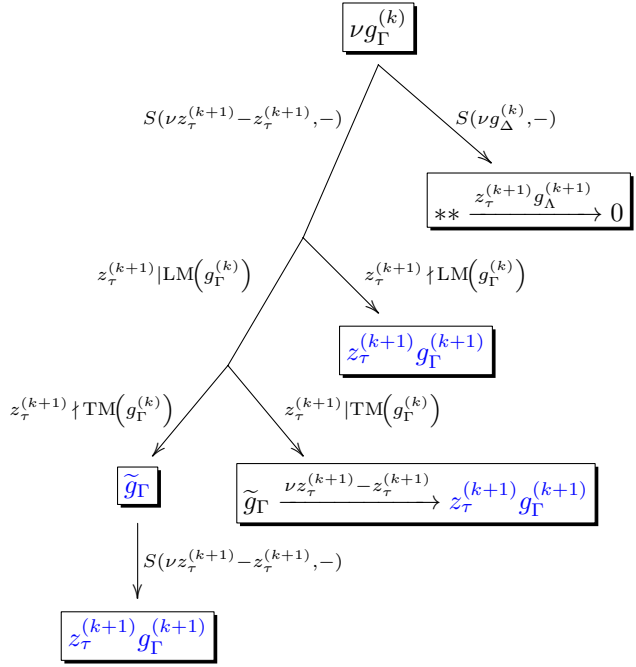}
\caption{Round 1 of Buchberger's Algorithm applied to $\cG_0$. Generators added to the working basis $\cG^\prime$ are shown in blue. The ** indicates that various outcomes are produced at that step, but all reduce to zero as indicated, where $\Lambda\in\{\gm\sm(\gm\cap\dl), \dl\sm(\gm\cap\dl), \gm\cap\dl, \gm\cup\dl\}$.}
\label{flowchart}
\end{center}
\end{figure}

\subsubsection{S-polynomials with $\nutopk$}

As the only element of $\cG_0$ that is not divisible by $\nu$, the generator $\nutopk$ plays a special role. Proposition~\ref{gbprinciplecoeff} implies that S-polynomials among the elements of $\cG_0$ divisible by $\nu$ are equal to $\nu$ times an S-polynomial of the underlying generators of $\pi_k(N_k)$. Therefore, the steps of Buchberger's Algorithm on $\cG_0$ that do not involve $\nutopk$ are parallel to the steps of Buchberger's Algorithm applied to the basis for $\pi_k(N_k)$. That is, in the process of running Buchberger's Algorithm on $\cG_0$ we incidentally produce a Gr\"obner basis for $\pi_k(N_k)$ itself, except that every basis element is multiplied by $\nu$. By contrast, the S-polynomials involving $\nutopk$ have no parallel in Buchberger's Algorithm applied to a basis for $\pi_k(N_k)$. They are the only steps of the algorithm that can possibly produce generators that do not involve $\nu$ in any of their terms. The plan, of course, is to discard any generator in which $\nu$ appears (Step 2 of Algorithm~\ref{iqalg}). Therefore, precursors to the $g_\gm\ksup{k+1}$, which  we are hoping to find in the ideal quotient, will have to be produced by S-polynomials with $\nutopk$.

The result of $S(\nutopk,\nu g_\gm\ksup{k})$ depends on whether $\ztau\ksup{k+1}$ divides the leading monomial of $g_\gm\ksup{k}$. If not, then the S-polynomial reduces to $\ztau\ksup{k+1}g_\gm\ksup{k+1}$, which is a precursor to $g_\gm\ksup{k+1}$. If it does, then the S-polynomial with $\nutopk$ reverses the term order of $\nu g_\gm\ksup{k}$ by removing $\nu$ from its leading term. We call the resulting polynomial a tilde generator
\begin{equation}
\label{tildegendfn}
\widetilde{g}_\gm=\begin{cases} \nu g_\gm^{(k),\into}-g_\gm^{(k),\out}& \text{ if } \gm \text{ is out-led}\\ 
\nu g_\gm^{(k),\out}-g_\gm^{(k),\into}& \text{ if } \gm \text{ is in-led.}
\end{cases}
\end{equation}

If $g_\gm\ksup{k}$ had also a trailing monomial divisible by $\ztau\ksup{k+1}$ (meaning that the edge labeled $\ztau\ksup{k+1}$ goes both into and out of $\gm$ in $G\ksup{k+1}$), then the tilde generator will reduce to $\ztau\ksup{k+1} g_\gm\ksup{k+1}$, which belongs in $\cG^\prime$. Otherwise, we add the tilde generator itself to the working basis. We then immediately compute a second S-polynomial $S(\nutopk,\widetilde{g}_\gm)$, which produces $\ztau\ksup{k+1} g_\gm\ksup{k+1}$, so we add that to $\cG^\prime$ as well. At this point, we will have produced $\ztau\ksup{k+1}g_\gm\ksup{k+1}$ for all subsets $\gm$, so we will have confirmed that $\pi_k(N_k):\ztau\ksup{k+1}\supseteq N_{k+1}$. The working basis will be
\begin{align}
\label{workingbasis}
\cG^\prime=\,\cG_0&\cup\left\{\widetilde{g}_\gm\,\vert\,\gm\subset G, \ztau\ksup{k+1}\vert\lt{g_\gm\ksup{k}}, \ztau\ksup{k+1}\nmid\trt{g_\gm\ksup{k}}\right\}\nonumber\\&\cup\left\{\ztau\ksup{k+1}g_\gm\ksup{k+1}\,\vert\,\gm\subset G\right\}.
\end{align}
All remaining computations will be aimed at proving that no further additions to our working basis are required, which will establish that $\pi_k(N_k):\ztau\ksup{k+1}\subseteq N_{k+1}$.
\begin{lemma}
\label{buchend}
The outcome of Algorithm~\ref{buchalg} applied to $\cG_0$ in $\cE_{k+1}[\nu]$ with the monomial order of Definition~\ref{edgeringmo} is 
$\cG^\prime.$
In particular, $\cG^\prime$ is a Gr\"obner basis for $\pi_k(N_k)$.
\end{lemma}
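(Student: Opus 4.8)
The plan is to prove Lemma~\ref{buchend} by showing that the working basis $\cG^\prime$ described in Equation~\ref{workingbasis} is closed under the process of Buchberger's Algorithm: every S-polynomial among its elements reduces to zero. Granting that, Buchberger's theorem~\cite[Theorem 1.7.4]{gbbook} gives that $\cG^\prime$ is a Gr\"obner basis for the ideal it generates in $\cE_{k+1}[\nu]$, and since $\cG^\prime$ is obtained from $\cG_0$ only by adding S-polynomial remainders, it generates the same ideal as $\cG_0$, namely $\nu\,\pi_k(N_k)+(\nu-1)(\ztau\ksup{k+1})$ inside $\cE_{k+1}[\nu]$. As noted in Section~\ref{bbrd1ovw}, Proposition~\ref{gbprinciplecoeff} says that the $\nu$-divisible part of any S-polynomial among the $\nu g_\gm\ksup{k}$ is just $\nu$ times the corresponding S-polynomial of the $g_\gm\ksup{k}$, so the sub-collection $\{\nu g_\gm\ksup{k}\}\cup\{\widetilde g_\gm\}$ being closed under S-polynomial reduction (modulo the $\ztau\ksup{k+1}g_\gm\ksup{k+1}$, which have no $\nu$) shows in particular that $\{g_\gm\ksup{k}\}$ together with its Buchberger completion is a Gr\"obner basis for $\pi_k(N_k)$ — this is the last sentence of the lemma.

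The main body of work is a case analysis of S-polynomial computations, organized exactly as the flowchart in Figure~\ref{flowchart} and Sections~\ref{nutoponespolys}--\ref{subsetspolys} indicate. First I would handle the S-polynomials involving $\nutopk$: for each $\gm$, depending on whether $\ztau\ksup{k+1}$ divides $\lm{g_\gm\ksup{k}}$ and/or $\trm{g_\gm\ksup{k}}$, one either gets $\ztau\ksup{k+1}g_\gm\ksup{k+1}$ directly (reducing to an element of $\cG^\prime$), or one gets the tilde generator $\widetilde g_\gm$, and then a second S-polynomial $S(\nutopk,\widetilde g_\gm)$ yields $\ztau\ksup{k+1}g_\gm\ksup{k+1}$; Observation~\ref{MOanddiv} is what pins down which term leads at each stage. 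Next come the S-polynomials among the $\nu g_\gm\ksup{k}$ and $\nu g_\dl\ksup{k}$: here the geometric input is decisive. The claim to establish is that $S(\nu g_\gm\ksup{k},\nu g_\dl\ksup{k})$ reduces to zero via $\cG^\prime$, and the mechanism is that the leading monomials of $g_\gm\ksup{k}$ and $g_\dl\ksup{k}$ either have coprime leading monomials (Proposition~\ref{gbprinciplegcd} finishes it) or, when they share edge variables, the S-polynomial is $\nu$ times a combination that is visibly a multiple of one of $g_\Lambda\ksup{k+1}$ for $\Lambda\in\{\gm\setminus(\gm\cap\dl),\,\dl\setminus(\gm\cap\dl),\,\gm\cap\dl,\,\gm\cup\dl\}$ — i.e.~the S-polynomial combinatorics of the non-local ideal mirror the Boolean operations on subsets of thick edges and bivalent vertices. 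Finally I would check the mixed S-polynomials: those between $\nutopk$ and the new elements $\ztau\ksup{k+1}g_\gm\ksup{k+1}$ and $\widetilde g_\gm$, those between the $\widetilde g_\gm$ and the $\nu g_\dl\ksup{k}$, and those among the $\ztau\ksup{k+1}g_\gm\ksup{k+1}$ themselves; several of these vanish immediately by coprimality of leading monomials (no $\nu$ in $\ztau\ksup{k+1}g_\gm\ksup{k+1}$, so its leading monomial is $\ztau\ksup{k+1}$ times an edge monomial), and the remaining ones reduce using Proposition~\ref{unorderedreduction} to absorb the ambiguity in term order that arises after cancellation.

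The hard part will be the S-polynomials among the $\nu g_\gm\ksup{k}$, precisely because the subsets $\gm,\dl$ of thick edges and bivalent vertices can overlap in complicated ways and the products $\xsub{\gm}{G\setminus\gm}$, $\zsub{\gm}{\tau}\ksup{k}$, etc., interact with the edges joining $\gm$ to $\dl$, to $\gm\cap\dl$, and to their common complement. One must carefully track which edges are internal to $\gm\cup\dl$, which go from $\gm\cap\dl$ to the outside, and so on, and verify that the cancellation forced by the S-polynomial of $g_\gm\ksup{k}$ and $g_\dl\ksup{k}$ leaves exactly a monomial multiple of a generator already in $\cG^\prime$ (after discarding anything involving $\nu$, per Step~2 of Algorithm~\ref{iqalg}). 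The key structural fact making this tractable is that divisibility by $\ztau\ksup{k+1}$ detects the relationship of a subset to the strand being closed (Observation~\ref{MOanddiv} and the discussion after Definition~\ref{edgeringmo}), so the leading-term bookkeeping reduces to reading off incidences in the braid graph rather than to brute-force monomial comparison. This geometric dictionary — subsets $\leftrightarrow$ generators, Boolean operations $\leftrightarrow$ S-polynomial reductions, strand closure $\leftrightarrow$ divisibility by $\ztau\ksup{k+1}$ — is what ultimately keeps $\cG^\prime$ finite and lets the second round of Buchberger's Algorithm terminate with every S-polynomial reducing to zero, giving the lemma.
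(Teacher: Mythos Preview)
Your plan is essentially the paper's own approach: first handle $S(\nutopk,-)$ to build $\cG^\prime$ (Section~\ref{nutoponespolys}), then show $S(\nu g_\gm\ksup{k},\nu g_\dl\ksup{k})\xrightarrow{\cG^\prime}0$ via the subset-overlap computations (Lemma~\ref{subsetoverlap}, built from Propositions~\ref{subsetoverlapinterior}--\ref{ksubsetoverlap}), and finally run a second round checking all S-polynomials involving elements of $\cG^\prime\setminus\cG_0$ (Lemmas~\ref{ztautilde}, \ref{doubletilde}, \ref{2nunonu}, \ref{tildeprop}). One caution: the mixed S-polynomials you describe as vanishing ``immediately by coprimality of leading monomials'' mostly do not---for instance $S(\nu g_\gm\ksup{k},\ztau\ksup{k+1}g_\dl\ksup{k+1})$ and $S(\nu g_\gm\ksup{k},\widetilde g_\dl)$ require genuine case analyses (Lemmas~\ref{2nunonu} and~\ref{tildeprop}) that again feed through the subset combinatorics, so don't underestimate Round~2.
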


\subsubsection{S-polynomials among the $\nu g_\gm\ksup{k}$}

Since S-polynomials with $\nutopk$ produced the precursors to all of the generators of $N_{k+1}$,  the hope now is that all remaining S-polynomials reduce to zero via the working basis $\cG^\prime$. Section~\ref{subsetspolys} establishes that this is the case for $S(g_\gm\ksup{k},g_\dl\ksup{k})$ for any pair of subsets $\gm$ and $\dl$. 

The linchpin to the argument is Lemma~\ref{subsetoverlap}, which expresses $S(g_\gm\ksup{k},g_\dl\ksup{k})$ in terms of intersections, unions, and complements of $\gm$ and $\dl$.  The least common multiples and greatest common divisors of monomials that appear in the S-polynomial formula translate into unions and intersections of sets in $G$. For example, $\gcd(\xsub{\gm}{G\sm\gm},\xsub{\dl}{G\sm\dl})=\xsub{\gm\cap\dl}{G\sm(\gm\cup\dl)}$. This correspondence between operations on monomials and operations on sets in $G$ is what makes it possible to describe the progress of Buchberger's Algorithm in terms of the graph.

Once these convenient expressions for the $S(g_\gm\ksup{k},g_\dl\ksup{k})$ have been obtained, it remains to argue that they may be reduced to zero in $\cG^\prime$. For that, we make liberal use of Observation~\ref{MOanddiv} to analyze and compare term orders of the S-polynomials and the elements of $\cG^\prime.$

\subsubsection{S-polynomials involving generators in $\cG^\prime\setminus\cG_0$}

Out of the initial S-polynomial calculations we produced only two types of generators to include in $\cG^\prime$ that were not already in $\cG_0$: tilde generators for a limited class of subsets and $\ztau\ksup{k+1}g_\gm\ksup{k+1}$ for all subsets. Section~\ref{bbrd2} carries out a final round of computations to check that S-polynomials involving these new generators reduce to zero within $\cG^\prime$. Much of the computational work follows from Section~\ref{bbrd1} combined with the shortcuts of Section~\ref{gbsimplify}. The arguments for the reductions to zero essentially follow from Observation~\ref{MOanddiv}, but require careful case by case analyses of term orders. This work completes the proof of Lemma~\ref{buchend}.

\subsection{Proof of Theorem~\ref{idealqthmintro} from Lemma~\ref{buchend}}
\label{lemmatothmsec}
Once Lemma~\ref{buchend} is established, Theorem~\ref{idealqthmintro} follows readily by applying the rest of Algorithm~\ref{iqalg}.
\begin{proof}[Proof of Theorem~\ref{idealqthmintro} from Lemma~\ref{buchend}]
Having completed Buchberger's Algorithm (Step 1 of Algorithm~\ref{iqalg}) and obtained $\cG^\prime\subset\cE_{k+1}[\nu]$, we must now intersect with $\cE_{k+1}$. Doing so produces
\[\cG^\prime\cap\cE_{k+1}=\left\{\ztau\ksup{k+1}g_\gm\ksup{k+1}\,\vert\,\gm\subset G\right\}\]
as a basis for $\pi_k(N_k)\cap\left(\ztau\ksup{k+1}\right)$ in $\cE_{k+1}$. For the last step of Algorithm~\ref{iqalg}, we divide each element of $\cG^\prime\cap\cE_{k+1}$ by $\ztau\ksup{k+1}$ to obtain
\[\cG_{\text{end}}=\left\{g_\gm\ksup{k+1}\,\vert\,\gm\subset G\right\}\] as our basis for $\pi_k(N_k):\left(\ztau\ksup{k+1}\right)$ in $\cE_{k+1}$. The result is exactly the generating set for $N_{k+1}$ described in Definition~\ref{idealdfn}.
\end{proof}

\section{Example/Illustration}
\label{examplesection}

There are three critical features of our set-up that make it possible to characterize the progress and outcome of Algorithm~\ref{iqalg} as we have done:
\begin{enumerate}
\item The divisor ideal $\left(\ztau\ksup{k+1}\right)$ is principal and monomial, which makes the role of $S(\nutopk,-)$ clear. 
\item The S-polynomial of the non-local generators associated to a pair of subsets can be described in terms of operations on subsets.
\item Divisibility by $\ztau\ksup{k+1}$ is closely related to the determination of a polynomial's leading term, as recorded in Observation~\ref{MOanddiv}.
\end{enumerate}
This section aims to illustrate these features by way of the small graphs in Figure~\ref{smallexample}.

\begin{figure}
\begin{center}
\input{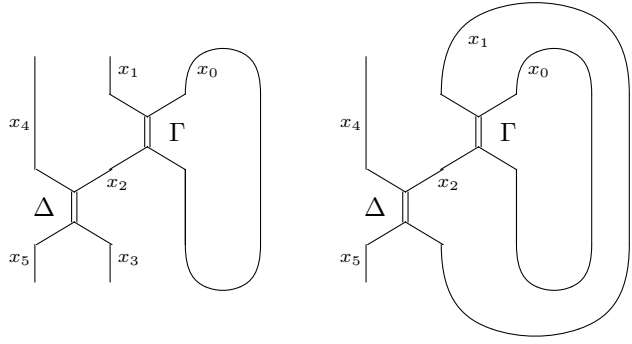}
\caption{Example for Section~\ref{examplesection}: $G_1$ is on the left; $G_2$ is on the right.}
\label{smallexample}
\end{center}
\end{figure}

\subsection{Set-up}
The graphs $G_1$ and $G_2$ in Figure~\ref{smallexample} are labeled as instructed in Definition~\ref{edgeringmo}. For simplicity, we take them to be blackboard framed. Section~\ref{nonbbframings} contains the set-up for a non-blackboard framed example. The edge rings of $G_1$ and $G_2$, respectively, are 
\begin{align*}
&\cE_1=\cR[x_0,x_1,x_2,x_3,x_4,x_5]
&\cE_2=\cR[x_0,x_1,x_2,x_4,x_5].
\end{align*}
It would also be consistent with our notation to say that $x_0=\ztau\ksup{1}$, $x_1=\ztau\ksup{2}$, $x_3=\zbeta\ksup{2}$, $x_4=\ztau\ksup{3}$, and $x_5=\zbeta\ksup{3}$, but we will not need these labels in this section.

The map $\pi_1:\cE_1\to\cE_2$ is the quotient by $(x_1-x_3)$. Let $N_1$ and $N_2$ be the non-local ideals for $G_1$ and $G_2$, respectively. Theorem~\ref{idealqthmintro} claims that \[\pi_1(N_1):(x_1)=N_2\] in $\cE_2$. Algorithm~\ref{iqalg} begins with a basis for $\nu\pi_1(N_1)+(\nu-1)(x_1)$, which Algorithm~\ref{buchalg} will turn into a Gr\"obner basis.

The monomial order on $\cE_2$ is $x_1>x_0>x_2>x_4>x_5$.
All polynomials in this section are written correctly with respect to the monomial order.  Let $\gm$ be the upper (right) thick edge and $\dl$ be the lower (left) thick edge. The non-local generators are as follows.
\begin{align*}
g_\gm\ksup{1}&=\pi_1(x_1-x_2)=x_1-x_2 &\quad g_\gm\ksup{2}&=x_1-x_2\\
g_\dl\ksup{1}&=\pi_1(x_2x_4-x_3x_5)=-x_1x_5+x_2x_4 &\quad g_\dl\ksup{2}&=-x_1x_5+x_2x_4\\
g_{\gm\cup\dl}\ksup{1}&=\pi_1(x_1x_4-x_3x_5)=x_1x_4-x_1x_5 &\quad g_{\gm\cup\dl}\ksup{2}&=x_4-x_5
\end{align*}
Notice that $\gm$ and $\gm\cup\dl$ are out-led while $\dl$ is in-led. These labels are to be interpreted with respect to $\cE_2$ and its monomial order. With respect to $\cE_1$, all three of $\gm$, $\dl$, and $\gm\cup\dl$ would be out-led.

We run Buchberger's Algorithm in $\cE_2[\nu]$ with monomial order \[\nu>x_1>x_0>x_2>x_4>x_5.\] and starting basis 
\[\cG_0=\left\{\nu x_1-x_1,\nu g_\gm\ksup{1},\nu g_\dl\ksup{1},\nu g_{\gm\cup\dl}\ksup{1}\right\}.\]

\subsection{S-polynomials with $\nu x_1-x_1$}
As expected, the S-polynomials with $\nu x_1-x_1$ remove factors of $\nu$, thereby reversing term orders. In two cases, the result is a tilde generator:
\begin{align*}
&S(\nu x_1-x_1, \nu g_\gm\ksup{1})=\nu x_2-x_1=\widetilde{g}_\gm;\\
&S(\nu x_1-x_1, \nu g_\dl\ksup{1})=\nu x_2x_4-x_1x_5=\widetilde{g}_\dl.
\end{align*}
In the third case, the result reduces by $\nu x_1-x_1$ to a precursor of a generator of $N_2$:
\begin{align*}
&S(\nu x_1-x_1, \nu g_{\gm\cup\dl}\ksup{1})=\nu x_1x_5-x_1x_4\xrightarrow{\nu x_1-x_1}-x_1x_4+x_1x_5=-x_1g_{\gm\cup\dl}\ksup{2}.
\end{align*}
We add all three of these outputs to the working basis.

Further S-polynomials between $\nu x_1-x_1$ and the tilde generators produce the remaining precursors to generators of $N_2$, which we also add to the working basis:
\begin{align*}
&S(\nu x_1-x_1, \widetilde{g}_\gm) = x_1^2-x_1x_2 = x_1g_\gm\ksup{2};\\
&S(\nu x_1-x_1, \widetilde{g}_\dl) = x_1^2x_5-x_1x_2x_4 = -x_1g_\dl\ksup{2}.
\end{align*}
As expected, the working basis at this point is 
\[\cG^\prime=\left\{\nu x_1-x_1, \nu g_\gm\ksup{1}, \nu g_\dl\ksup{1}, \nu g_{\gm\cup\dl}\ksup{1}, \widetilde{g}_\gm, \widetilde{g}_\dl, x_1g_\gm\ksup{2}, x_1 g_\dl\ksup{2}, x_1 g_{\gm\cup\dl}\ksup{2}\right\}.\]
It contains all of the precursors to the generators of $N_2$ and contains no other polynomials that will survive the intersection with $\cE_2$. The hope is that all remaining S-polynomials will reduce to zero within $\cG^\prime$.

Notice that the choice of monomial order contributed to the efficiency of the calculations in this section. Since $x_1$ determined the leading terms of $g_\gm\ksup{1}$ and $g_\dl\ksup{1}$, these polynomials were efficiently handled by $S(\nu x_1-x_1,-)$. For example, if the term order of $g_\dl\ksup{1}$ had been reversed, $S(\nu x_1-x_1, \nu g_\dl\ksup{1})$ would have produced $x_1\widetilde{g}_\dl$. Keeping the degree of S-polynomials as low as possible helps to keep the size and composition of the working basis under control.

\subsection{S-polynomials among $\nu g_\gm\ksup{1}$, $\nu g_\dl\ksup{1}$, and $\nu g_{\gm\cup\dl}\ksup{1}$}
As expected, these S-polynomials all reduce to zero within the working basis $\cG^\prime$. There are often several ways to reduce one of these S-polynomials. We follow the methods used in the general arguments of Section~\ref{subsetspolys}.

Recall that $\gm$ and $\gm\cup\dl$ are out-led, while $\dl$ is in-led. The S-polynomials among $\nu g_\gm\ksup{1}$, $\nu g_\dl\ksup{1}$, and $g_{\gm\cup\dl}\ksup{1}$ can be computed from the expressions in Proposition~\ref{ksubsetoverlap} using their relationships to the non-local generators of $N_2$, along with Proposition~\ref{gbprinciplecoeff}. For example:
\begin{align*}
S(\nu g_\gm\ksup{1}, \nu g_\dl\ksup{1})
&=S(\nu g_\gm\ksup{2}, \nu g_\dl\ksup{2})\\
&=\nu S(g_\gm\ksup{2}, g_\dl\ksup{2})\quad\text{by Prop.~\ref{gbprinciplecoeff},}\\
&=\nu \xsub{\dl}{\gm}g_{\gm\cup\dl}\ksup{2}\quad\text{by Prop.~\ref{ksubsetoverlap}, with roles of $\gm$ and $\dl$ reversed,}\\
&=\nu x_2(x_4-x_5)
\end{align*}
Similar computations give
\begin{align*}
S(\nu g_\gm\ksup{1}, \nu g_{\gm\cup\dl}\ksup{1}) &=\nu g_\dl\ksup{2}\\
&=\nu x_1x_5 - \nu x_2x_4\\
S(\nu g_\dl\ksup{1}, \nu g_{\gm\cup\dl}\ksup{1}) &= \nu\left(g_{\gm\cup\dl}^{(2),\out}g_\dl^{(2),\out}-g_{\gm\cup\dl}^{(2),\into}g_\dl^{(2),\into}\right)\\
&= \nu x_1x_5^2 - \nu x_2x_4^2.
\end{align*}
All three of these S-polynomials reduce as described in Lemma~\ref{subsetoverlap}. To determine which case is relevant, notice that $x_1$ is internal to $\gm\cup\dl$ and not to $\gm$ or $\dl$, and that $x_1$ divides exactly one term of $g_\gm\ksup{1}$ and of $g_\dl\ksup{1}$. Therefore:
\begin{align*}
&S(\nu g_\gm\ksup{1}, \nu g_\dl\ksup{1})\xrightarrow{\widetilde{g}_\dl,\widetilde{g}_\gm}0\quad\text{by Case 3 in the proof of Lemma~\ref{subsetoverlap};}\\
&S(\nu g_\gm\ksup{1}, \nu g_{\gm\cup\dl}\ksup{1})\xrightarrow{\nu g_\dl\ksup{1}}0\quad\text{by Case 2(b) in the proof of Lemma~\ref{subsetoverlap}; and}\\
&S(\nu g_\dl\ksup{1}, \nu g_{\gm\cup\dl}\ksup{1}) \xrightarrow{\nu x_1-x_1, \widetilde{g}_\dl, x_1g_{\gm\cup\dl}\ksup{2}}0\quad\text{by Case 2(b).}
\end{align*}
Note also that the full generality of Case 2(b) is not needed in the second computation above because $g_\dl\ksup{2}=g_\dl\ksup{1}$. 

We have now computed all of the S-polynomials among generators in the original basis $\cG_0$. Although reducing these S-polynomials by hand was straightforward enough, the computations might seem rather ad hoc, and would seem more so if we had considered all of the alternative ways to reduce each S-polynomial. Standardizing the reduction procedures is crucial to being able to generalize to arbitrary pairs of subsets in arbitrary graphs. In turn, characterizing the output of these S-polynomials in terms of operations on subsets is crucial to standardizing the reduction procedures. This, in brief, is the content of Section~\ref{subsetspolys}.

\subsection{Remaining S-polynomials}
It remains to check that S-polynomials involving elements of $\cG^\prime\setminus\cG_0$ reduce to zero within $\cG^\prime$. We leave it to the unusually detail-oriented reader to confirm the necessary calculations, but state the results with references to the relevant arguments in Section~\ref{bbrd2}.

It may seem that many of the calculations in this section are redundant. For example, the various S-polynomials involving generators associated to $\gm$ and $\dl$ almost all produce a multiple of $g_{\gm\cup\dl}\ksup{2}$. However, $g_{\gm\cup\dl}\ksup{2}$ itself is not in the working basis, hence not available for reductions. Therefore, the multiple makes a difference: $\nu$ may allow us to reduce by a multiple of $\nu g_{\gm\cup\dl}\ksup{1}$; $x_1$ may allow us to reduce by a multiple of $g_{\gm\cup\dl}\ksup{2}$. Sometimes, as for $S(\nu g_\gm\ksup{1}, \nu g_\dl\ksup{1})$ above, these simple reductions are impossible. We must turn instead to tilde generators or $\nu x_1-x_1$. So, despite the similarity of the remaining S-polynomial calculations, the reduction arguments are delicate.

It is also worth noting that S-polynomials involving tilde generators do not behave like the S-polynomials involving their non-tilde counterparts. The generator $\widetilde{g}_\gm$ is effectively $\nu g_\gm\ksup{1}$ with its term order reversed. Therefore, $S(\widetilde{g}_\gm,-)$ bears little relation to $S(\nu g_\gm\ksup{1},-)$. For example, $S(\nu g_\gm\ksup{1},\nu g_{\gm\cup\dl}\ksup{1})$ reduced via $g_\dl\ksup{1}$ while $S(\widetilde{g}_\gm, \nu g_{\gm\cup\dl}\ksup{1})$ reduces via $x_1g_{\gm\cup\dl}\ksup{2}$ and $x_1g_\gm\ksup{2}$. 

The argument at the beginning of Section~\ref{bbrd2}, referring to Statement (6) of Proposition~\ref{nutopone}, takes care of S-polynomials involving $\nu x_1-x_1$ and elements of $\cG^\prime\setminus\cG_0$. It confirms that
$S(\nu x_1-x_1,x_1g_\Lambda\ksup{2})\xrightarrow{\nu x_1-x_1,x_1g_\Lambda\ksup{2}}0
$
for $\Lambda\in\left\{\gm,\dl,\gm\cup\dl\right\}$. 

The next argument in Section~\ref{bbrd2}, which refers to Proposition~\ref{gbprinciplecoeff} and Lemma~\ref{subsetoverlap}, concerns S-polynomials involving pairs of elements of the form $x_1g_\Lambda\ksup{2}$. It confirms that
\begin{align*}
&S(x_1g_\gm\ksup{2},x_1g_\dl\ksup{2})\xrightarrow{x_1g_{\gm\cup\dl}\ksup{2}}0;\\
&S(x_1g_\gm\ksup{2},x_1g_{\gm\cup\dl}\ksup{2})\xrightarrow{x_1g_{\dl}\ksup{2}}0; \text{ and}\\
&S(x_1g_\dl\ksup{2},x_1g_{\gm\cup\dl}\ksup{2})\xrightarrow{x_1g_\dl\ksup{2},x_1g_{\gm\cup\dl}\ksup{2}}0.
\end{align*}

Lemma~\ref{2nunonu} concerns S-polynomials involving one generator of the form $\nu g_\Lambda\ksup{1}$ and one of the form $x_1g_\Lambda\ksup{2}$. It applies to
\begin{align*}
&S(\nu g_\gm\ksup{1},x_1 g_\gm\ksup{2})=0 \quad\text{(Case 3, but not in its full generality);}\\
&S(\nu g_\gm\ksup{1},x_1 g_\dl\ksup{2})\xrightarrow{\nu g_{\gm\cup\dl}\ksup{1}}0 \quad\text{(Case 3, but not in its full generality); and}\\
&S(\nu g_\gm\ksup{1},x_1 g_{\gm\cup\dl}\ksup{2})=S(\nu g_\gm\ksup{1}, \nu g_{\gm\cup\dl}\ksup{1})\xrightarrow{\nu g_{\dl}\ksup{1}}0 \quad\text{(before the break-down to cases).}\\
\end{align*}
Similarly, it applies to 
\begin{align*}
&S(\nu g_\dl\ksup{1},x_1 g_\gm\ksup{2})\xrightarrow{\nu g_{\gm\cup\dl}\ksup{1}}0 \quad\text{(Case 3, but not in its full generality)}\\
&S(\nu g_\dl\ksup{1},x_1 g_\dl\ksup{2})=0 \quad\text{(Case 3, but not in its full generality); and}\\
&S(\nu g_\dl\ksup{1},x_1 g_{\gm\cup\dl}\ksup{2})=S(\nu g_\dl\ksup{1}, \nu g_{\gm\cup\dl}\ksup{1})\xrightarrow{\nu x_1-x_1,\widetilde{g}_\dl,x_1g_{\gm\cup\dl}\ksup{2}}0 \\&\text{(before the break-down to cases).}
\end{align*}
Finally, it applies to
\begin{align*}
&S(\nu g_{\gm\cup\dl}\ksup{1},x_1 g_\gm\ksup{2})\xrightarrow{x_1 g_{\dl}\ksup{2}}0 \quad\text{(Case 1);}\\
&S(\nu g_{\gm\cup\dl}\ksup{1},x_1 g_\dl\ksup{2})\xrightarrow{x_1g_\dl\ksup{2},x_1g_{\gm\cup\dl}\ksup{2}}0 \quad\text{(Case 1); and}\\
&S(\nu g_{\gm\cup\dl}\ksup{1},x_1 g_{\gm\cup\dl}\ksup{2})=0 \quad\text{(before the break-down to cases).}
\end{align*}

Lemma~\ref{tildeprop} concerns S-polynomials between tilde generators and generators of the form $\nu g_\Lambda\ksup{1}$. Case 1 applies to
\begin{align*}
&S(\nu g_{\gm\cup\dl}\ksup{1},\widetilde{g}_\gm)\xrightarrow{\nu x_1-x_1, x_1g_{\gm\cup\dl}\ksup{2}, x_1g_\gm\ksup{2}}0\quad\text{and}\\
&S(\nu g_{\gm\cup\dl}\ksup{1},\widetilde{g}_\dl)\xrightarrow{\nu x_1-x_1, x_1g_{\gm}\ksup{2}}0.
\end{align*}
Case 2 applies to
\begin{align*}
&S(\nu g_\gm\ksup{1}, \widetilde{g}_\gm)\xrightarrow{\widetilde{g}_\gm,x_1g_\gm\ksup{2}}0;\\
&S(\nu g_\gm\ksup{1}, \widetilde{g}_\dl)\xrightarrow{\widetilde{g}_\gm,x_1g_\dl\ksup{2}}0;\\
&S(\nu g_\dl\ksup{1}, \widetilde{g}_\gm)\xrightarrow{\widetilde{g}_\dl,x_1g_\gm\ksup{2}}0;\text{ and}\\
&S(\nu g_\dl\ksup{1}, \widetilde{g}_\dl)\xrightarrow{\widetilde{g}_\dl,x_1g_\dl\ksup{2}}0.
\end{align*}

Finally, we have
\begin{align*}
&S(\widetilde{g}_\gm,\widetilde{g}_\dl)\xrightarrow{x_1g_{\gm\cup\dl}\ksup{2}}0\quad\text{by Lemma~\ref{doubletilde} and}\\
&S(\widetilde{g}_\dl,x_1g_{\gm\cup\dl}\ksup{2})\xrightarrow{\nu x_1-x_1,x_1g_\dl\ksup{2},x_1g_{\gm\cup\dl}\ksup{2}}0\quad\text{by Lemma~\ref{ztautilde}.}
\end{align*}

The remaining pairs of elements in $\cG^\prime$ involving at least one element of $\cG^\prime\setminus\cG_0$ have no common divisors in their leading monomials, so their S-polynomials reduce to zero by Proposition~\ref{gbprinciplegcd}.

\subsection{Calculating the ideal quotient}
We have checked that all S-polynomials among elements of $\cG^\prime$ reduce to zero within $\cG^\prime$. Therefore, Buchberger's Algorithm has terminated and $\cG^\prime$ is a Gr\"obner basis for $\nu\pi_1(N_1)+(\nu-1)(x_1)$. That is, we have verified Lemma~\ref{buchend} for this example. We now intersect with $\cE_2$ to obtain a basis for $\pi_1(N_1)\cap(x_1)$:
\[\cG^\prime\cap\cE_2=\left\{x_1g_\gm\ksup{2},x_1g_\dl\ksup{2},x_1g_{\gm\cup\dl}\ksup{2}\right\}.\] Divide each of these generators by $x_1$ to obtain a basis for the quotient
\[\pi_1(N_1):(x_1)=\left(g_\gm\ksup{2},g_\dl\ksup{2},g_{\gm\cup\dl}\ksup{2}\right).\] This basis is the defining basis for $N_2$.

\section{Implementing Buchberger's Algorithm: Round 1}
\label{bbrd1}

As we compute S-polynomials, we will record the results in tables showing the propositions used and whether the result of the S-polynomial was added to the working basis. Table~\ref{round1table} records the S-polynomials we compute in this section. 

\begin{table}[htbp]
\begin{center}
\renewcommand\arraystretch{1.5}
\begin{tabular}{|c|c|c|c|}
\hline
$S\!\left(-,-\right)$ & Result & Proposition & Add to $\cG^\prime$? \\\hline\hline
$S\!\left(\nu g_\gm\ksup{k},\nutopk\right)$ & $\ztau\ksup{k+1} g_\gm\ksup{k+1}$ or $\widetilde{g}_\gm$ & Prop.~\ref{nutopone} & yes \\\hline
$S\!\left(\widetilde{g}_\gm,\nutopk\right)$ & $\ztau\ksup{k+1} g_\gm\ksup{k+1}$ & Prop.~\ref{nutopone} & yes \\\hline
$S\!\left(\nu g_\gm\ksup{k},\nu g_\dl\ksup{k}\right)$ & 0 & Lemma~\ref{subsetoverlap} & \text{no} \\\hline
\end{tabular}
\end{center}\medskip
\caption{S-polynomials Round 1: All computations are assumed to be among generators in $\cG^\prime$ and are carried out in $\cE_{k+1}[\nu]$. S-polynomials are listed in the order they are computed in Section~\ref{bbrd1}.}
\label{round1table}
\end{table}

\subsection{S-polynomials with $\nutopk$}
\label{nutoponespolys}
We begin by describing the behavior of $S(\nutopk,-)$ with respect to various types of polynomials in $\cE_{k+1}[\nu]$. 

\begin{prop}
\label{nutopone}
Let $f\in\cE_{k+1}$ and $f=\lt{f}+\overf$.
If $\gcd(\lm{f},\ztau\ksup{k+1})=1$, then 
\begin{align}
\tag{1}\label{nutop2nunodiv}&S(\nutopk,\nu f)\xrightarrow{\nutopk}-\frac{\ztau\ksup{k+1} f}{\lc{f}}\\
\tag{2}\label{nutop1nunodiv}&S(\nutopk, \nu\lt{f} + \overf)=-\frac{\ztau\ksup{k+1} f}{\lc{f}}\\
\tag{3}\label{nutop0nunodiv}&S(\nutopk, f)\xrightarrow{\nutopk}-\frac{\ztau\ksup{k+1} f}{\lc{f}}
\end{align}
\noi If $\gcd(\lm{f},\ztau\ksup{k+1})=\ztau\ksup{k+1}$, then 
\begin{align}
\tag{4}\label{nutop2nu}&S(\nutopk,\nu f)=-\frac{1}{\lc{f}}\left(\nu \overf+\lt{f}\right)\\
\tag{5}\label{nutop1nu}&S(\nutopk, \nu\lt{f}+\overf)=-\frac{f}{\lc{f}}\\
\tag{6}\label{nutop0nu}&S(\nutopk, f)=-\frac{1}{\lc{f}}\left(\nu \overf+\lt{f}\right)
\end{align}
\end{prop}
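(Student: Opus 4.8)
The plan is to verify all six identities by direct computation: each is a single application of the S-polynomial formula of Definition~\ref{spolydfn}, followed in statements~\eqref{nutop2nunodiv} and~\eqref{nutop0nunodiv} by one pass of the generalized division algorithm by $\nutopk$ alone. Everything reduces to careful bookkeeping of leading terms under the order of Definition~\ref{edgeringmo}. First I would record three facts about that order. Since $\nu>\ztau\ksup{k+1}>x_i$ for all $i$, the generator $\nutopk$ has leading monomial $\nu\ztau\ksup{k+1}$, leading coefficient $1$, and trailing term $-\ztau\ksup{k+1}$. Because $f\in\cE_{k+1}$ involves no $\nu$, and any monomial divisible by $\nu$ outranks any monomial that is not, both $\nu f$ and $\nu\lt{f}+\overf$ have leading monomial $\nu\lm{f}$ and leading coefficient $\lc{f}$; their non-leading parts are $\nu\overf$ and $\overf$ respectively. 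Finally, $\gcd(\nu\ztau\ksup{k+1},\nu\lm{f})=\nu\gcd(\ztau\ksup{k+1},\lm{f})$ while $\gcd(\nu\ztau\ksup{k+1},\lm{f})=\gcd(\ztau\ksup{k+1},\lm{f})$, and each of these equals $1$ when $\gcd(\lm{f},\ztau\ksup{k+1})=1$ and equals $\ztau\ksup{k+1}$ when $\ztau\ksup{k+1}\mid\lm{f}$.

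With these in hand, I would substitute $p=\nutopk$ (so $\overline{p}=-\ztau\ksup{k+1}$) and $q$ equal to $\nu f$, to $\nu\lt{f}+\overf$, or to $f$ into the second displayed form of $S(p,q)$ in Definition~\ref{spolydfn}. When $\gcd(\lm{f},\ztau\ksup{k+1})=1$ this yields $S(\nutopk,\nu f)=S(\nutopk,f)=-\ztau\ksup{k+1}\lm{f}-\tfrac{1}{\lc{f}}\nu\ztau\ksup{k+1}\overf$ and $S(\nutopk,\nu\lt{f}+\overf)=-\ztau\ksup{k+1}\lm{f}-\tfrac{1}{\lc{f}}\ztau\ksup{k+1}\overf$; using $\lt{f}=\lc{f}\lm{f}$, the latter is already $-\ztau\ksup{k+1}f/\lc{f}$, which is statement~\eqref{nutop1nunodiv}. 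For~\eqref{nutop2nunodiv} and~\eqref{nutop0nunodiv} one then divides by $\nutopk$: every term of $\tfrac{1}{\lc{f}}\nu\ztau\ksup{k+1}\overf$ is divisible by $\lm{\nutopk}=\nu\ztau\ksup{k+1}$, so one reduction step replaces the monomial $\nu\ztau\ksup{k+1}$ by $\ztau\ksup{k+1}$ in each such term; the resulting terms are $\nu$-free, hence no longer divisible by $\nu\ztau\ksup{k+1}$, so the division halts with remainder $-\ztau\ksup{k+1}\lm{f}-\tfrac{1}{\lc{f}}\ztau\ksup{k+1}\overf=-\ztau\ksup{k+1}f/\lc{f}$. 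When instead $\ztau\ksup{k+1}\mid\lm{f}$, the same substitution puts an extra factor $\ztau\ksup{k+1}$ into the relevant $\gcd$, which cancels the $\ztau\ksup{k+1}$ in $\overline{p}$ and leaves $-\lm{f}$ together with a surviving $\nu\overf$, $\overf$, or $\nu\overf$ term; repackaging $-\lm{f}=-\lt{f}/\lc{f}$ then gives statements~\eqref{nutop2nu}, \eqref{nutop1nu}, and~\eqref{nutop0nu}, with no further reduction carried out in~\eqref{nutop2nu} and~\eqref{nutop0nu} (the reducibility of $-\tfrac{1}{\lc{f}}(\nu\overf+\lt{f})$ is taken up in later sections).

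The only obstacle, and it is a mild one, is consistency in the bookkeeping. One must always invoke the fact that $\nu$-divisible monomials outrank $\nu$-free ones in order to locate the leading monomial of $\nu\lt{f}+\overf$ and of the intermediate S-polynomial expressions, and one must check in~\eqref{nutop2nunodiv} and~\eqref{nutop0nunodiv} that demoting $\nu\ztau\ksup{k+1}$ to $\ztau\ksup{k+1}$ introduces no new monomials divisible by $\nu\ztau\ksup{k+1}$, so the division terminates at the stated clean remainder rather than continuing. Everything else is mechanical substitution into the S-polynomial formula.
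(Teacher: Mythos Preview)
Your proposal is correct and takes essentially the same approach as the paper: both proofs compute each S-polynomial directly from Definition~\ref{spolydfn} after identifying the relevant $\lcm{}$ of leading monomials, and then perform a single reduction by $\nutopk$ in cases~\eqref{nutop2nunodiv} and~\eqref{nutop0nunodiv}. Your observation that $S(\nutopk,\nu f)=S(\nutopk,f)$ before reduction, and your explicit check that the division terminates after demoting $\nu\ztau\ksup{k+1}$ to $\ztau\ksup{k+1}$, are slight organizational additions beyond what the paper writes out, but the underlying argument is the same.
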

\begin{proof}
The least common multiple of the leading monomials in the first three cases is $\nu\ztau\ksup{k+1}\lm{f}$. We calculate the first S-polynomial above as follows.
\begin{align*}
S(\nutopk,\nu f)&=\frac{\nu\ztau\ksup{k+1}\lm{f}}{\nu\ztau\ksup{k+1}}\left(-\ztau\ksup{k+1}\right)-\frac{\nu\ztau\ksup{k+1}\lm{f}}{\nu\lt{f}}\left(\nu \overf\right)\\
&=-\frac{\nu\ztau\ksup{k+1} \overf}{\lc{f}}-\ztau\ksup{k+1}\lm{f}\quad\quad\quad\text{LT determined by }\nu\\
\textrm{reduce}\quad&+\frac{\overf}{\lc{f}}\left(\nutopk\right)\\
&=-\frac{\ztau\ksup{k+1} f}{\lc{f}}
\end{align*}
The second and third claims come from similar calculations.

In the latter three cases, the least common multiple of the leading monomials is $\nu\lm{f}$. Given this, we calculate as follows.
\begin{align*}
S(\nutopk,\nu f)=&\frac{\nu\lm{f}}{\nu\ztau\ksup{k+1}}\left(-\ztau\ksup{k+1}\right)-\frac{\nu\lm{f}}{\nu\lt{f}}\left(\nu \overf\right)&\\
=&-\frac{\nu \overf}{\lc{f}}-\lm{f}\quad\quad\quad\quad\quad\text{LT determined by }\nu\\
\end{align*}
The fifth and sixth cases are similar.
\end{proof}


We apply Proposition~\ref{nutopone} to compute $S(\nutopk,\nu g_\gm\ksup{k})$ and see which new generators must be added to the working basis, keeping in mind that leading coefficients are currently assumed to be 1. See the flowchart in Figure~\ref{flowchart}. If $\ztau\ksup{k+1}$ does not divide the leading term of $g_\gm\ksup{k}$, then Statement~(\ref{nutop2nunodiv}) of Proposition~\ref{nutopone} applies, so $S(\nutopk,\nu g_\gm\ksup{k})\xrightarrow{\nutopk}-\ztau\ksup{k+1}g_\gm\ksup{k}$. Since $\ztau\ksup{k+1}$ does not divide both terms of $g_\gm\ksup{k}$,  we have $g_\gm\ksup{k}=g_\gm\ksup{k+1}$. So we may say that it is $\ztau\ksup{k+1}g_\gm\ksup{k+1}$ that should be added to the working basis.

If $\ztau\ksup{k+1}$ does divide the leading term of $g_\gm\ksup{k}$, then Statement~(\ref{nutop2nu}) of Proposition~\ref{nutopone} applies, and  $S(\nutopk,\nu g_\gm\ksup{k}) = -\widetilde{g}_\gm$. Recall from Equation~\ref{tildegendfn} that
\begin{equation*}
\widetilde{g}_\gm=\begin{cases} \nu g_\gm^{(k),\into}-g_\gm^{(k),\out}& \text{ if } \gm \text{ is out-led}\\ 
\nu g_\gm^{(k),\out}-g_\gm^{(k),\into}& \text{ if } \gm \text{ is in-led.}
\end{cases}
\end{equation*} 
If $\ztau\ksup{k+1}$ also divides the trailing term of $g_\gm\ksup{k}$, then $\widetilde{g}_\gm$ reduces via $\nutopk$ to leave $g_\gm\ksup{k}$. In this case, $\ztau\ksup{k+1}$ must have been both an outgoing and an incoming edge to $\gm$ in $G\ksup{k+1}$, so $g_\gm\ksup{k}=\ztau\ksup{k+1} g_\gm\ksup{k+1}$. Therefore, if $\ztau\ksup{k+1}$ divides $g_\gm\ksup{k}$, we end up adding $\ztau\ksup{k+1} g_\gm\ksup{k+1}$ and not $\widetilde{g}_\gm$ to the working basis. These results are recorded in Table~\ref{round1table}.

The only case in which we have added $\widetilde{g}_\gm$ and not $\ztau\ksup{k+1}g_\gm\ksup{k+1}$ to $\cG^\prime$ is when $\ztau\ksup{k+1}$ divides the leading term but not the trailing term of $g_\gm\ksup{k}$. For convenience, we immediately compute S-polynomials $S(\nutopk, \widetilde{g}_\gm)$ in this case: Statement~(\ref{nutop1nunodiv}) of Proposition~\ref{nutopone} implies that $S(\nutopk,\widetilde{g}_\gm)=\ztau\ksup{k+1}g_\gm\ksup{k}$. Since $\ztau\ksup{k+1}$ divided only one term of $g_\gm\ksup{k}$, we also have $g_\gm\ksup{k}=g_\gm\ksup{k+1}$. Therefore, we may record that we are adding $\ztau\ksup{k+1} g_\gm\ksup{k+1}$ to the working basis in this case as well. 

Putting all of this together, we have produced $\ztau\ksup{k+1}g_\gm\ksup{k+1}$ for all $\gm$. The working basis is now
\begin{align*}
\cG^\prime=\,\cG_0&\cup\left\{\widetilde{g}_\gm\,\vert\,\gm\subset G, \ztau\ksup{k+1}\vert\lt{g_\gm\ksup{k}}, \ztau\ksup{k+1}\nmid\trt{g_\gm\ksup{k}}\right\}\nonumber\\&\cup\left\{\ztau\ksup{k+1}g_\gm\ksup{k+1}\,\vert\,\gm\subset G\right\}.
\end{align*}

\subsection{S-polynomials among the $\nu g_\gm\ksup{k}$}
\label{subsetspolys}
Our goal in this section is to describe the results of S-polynomials among generators of the form $\nu g_\gm\ksup{k}$ in terms of generators associated to related subsets. We first establish a general principle that will allow us to tackle products of interior edges (i.e., monomials labeled $x$) separately from products of boundary and closure edges (i.e., monomials labeled $z$).

\begin{prop}
\label{separateinteriorclosure}
Let $f_x,\overline{f}_x,f_z,\overline{f}_z,g_x,\overline{g}_x,g_z,\overline{g}_z\in\mathbb{Q}[\underline{x}^\prime]$ be monomials with the property that any monomial with an $x$ subscript is relatively prime to any monomial with a $z$ subscript. Let $S(f_x+\overline{f}_x,g_x+\overline{g}_x)_1$ and $S(f_x+\overline{f}_x,g_x+\overline{g}_x)_2$ denote the first and second terms of $S(f_x+\overline{f}_x,g_x+\overline{g}_x)$ as written in the definition of S-polynomial in Section~\ref{gbbackground}, not necessarily with respect to the monomial order, and similarly for $S(f_z+\overline{f}_z,g_z+\overline{g}_z)$. Then 
\begin{align*}
S(f_xf_z+\overline{f}_x \overline{f}_z, g_xg_z+\overline{g}_x \overline{g}_z)&= S(f_x+\overline{f}_x,g_x+\overline{g}_x)_1S(f_z+\overline{f}_z,g_z+\overline{g}_z)_1\\
 &\quad- S(f_x+\overline{f}_x,g_x+\overline{g}_x)_2S(f_z+\overline{f}_z,g_z+\overline{g}_z)_2\\
&\xrightarrow{S(f_x+\overline{f}_x,g_x+\overline{g}_x),S(f_z+\overline{f}_z,g_z+\overline{g}_z)}0
\end{align*}
\end{prop}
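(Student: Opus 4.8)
The plan is a direct computation from Definition~\ref{spolydfn}, organized around the one feature of the hypotheses that does any work: because every monomial carrying an $x$-subscript is coprime to every monomial carrying a $z$-subscript, each greatest common divisor and least common multiple that appears in the S-polynomial formula splits as a product of an ``$x$-part'' and a ``$z$-part.''  Precisely, for monomials $m,n$ in the $x$-variables and $m',n'$ in the $z$-variables one has $\gcd(mm',nn')=\gcd(m,n)\gcd(m',n')$ and $\lcm{mm',nn'}=\lcm{m,n}\lcm{m',n'}$.

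First I would fix conventions: label the two terms of each of the four binomials so that $f_x+\overline{f}_x$, $f_z+\overline{f}_z$, $g_x+\overline{g}_x$, and $g_z+\overline{g}_z$ are each written in decreasing monomial order.  This is the situation in which the proposition is applied, and it forces $f_xf_z$ and $g_xg_z$ to be the leading terms of the two product binomials, with $f_x=\lt{f_x+\overline{f}_x}$, $f_z=\lt{f_z+\overline{f}_z}$, and similarly for $g$.  Put $L=\lcm{f_xf_z,\,g_xg_z}$.  Substituting $f=f_xf_z+\overline{f}_x\overline{f}_z$ and $g=g_xg_z+\overline{g}_x\overline{g}_z$ into the first expression of Definition~\ref{spolydfn}, the two copies of $L$ cancel and
\[S\!\left(f_xf_z+\overline{f}_x\overline{f}_z,\ g_xg_z+\overline{g}_x\overline{g}_z\right)=\frac{L}{f_xf_z}\,\overline{f}_x\overline{f}_z-\frac{L}{g_xg_z}\,\overline{g}_x\overline{g}_z.\]
Splitting $L=\lcm{f_x,g_x}\lcm{f_z,g_z}$ and regrouping each term along $x/z$ lines, for instance
\[\frac{L}{f_xf_z}\,\overline{f}_x\overline{f}_z=\left(\frac{\lcm{f_x,g_x}}{f_x}\,\overline{f}_x\right)\left(\frac{\lcm{f_z,g_z}}{f_z}\,\overline{f}_z\right),\]
I would then compare each factor with the second expression of Definition~\ref{spolydfn} written for $S(f_x+\overline{f}_x,g_x+\overline{g}_x)$ and for $S(f_z+\overline{f}_z,g_z+\overline{g}_z)$: the two factors are exactly $S(f_x+\overline{f}_x,g_x+\overline{g}_x)_1$ and $S(f_z+\overline{f}_z,g_z+\overline{g}_z)_1$, and the second term of the S-polynomial produces the subscript-$2$ pieces in the same way.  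This is the claimed equality.

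For the reduction to zero, abbreviate $u_i=S(f_x+\overline{f}_x,g_x+\overline{g}_x)_i$ and $v_i=S(f_z+\overline{f}_z,g_z+\overline{g}_z)_i$ for $i=1,2$; these are monomials, the S-polynomial just computed equals $u_1v_1-u_2v_2$, and the two polynomials we may divide by are $S(f_x+\overline{f}_x,g_x+\overline{g}_x)=u_1-u_2$ and $S(f_z+\overline{f}_z,g_z+\overline{g}_z)=v_1-v_2$.  The identity $u_1v_1-u_2v_2=u_1(v_1-v_2)+v_2(u_1-u_2)$ already places the S-polynomial in the ideal these two generate; to see that it reduces to zero via them, I would apply Proposition~\ref{unorderedreduction} with its monomials $(p,q,r,s)$ taken to be $(u_1,u_2,v_2,v_1)$, so that $ps-rq=u_1v_1-u_2v_2$ while the correctly ordered versions of $p-q$ and $r-s$ are the correctly ordered versions of $u_1-u_2$ and $v_1-v_2$.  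One could equally run the two-step division by hand, exactly as in the proof of Proposition~\ref{gbprinciplecoeff}.

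The step I expect to be the genuine obstacle is the term-order bookkeeping concealed in the second paragraph: the formula of Definition~\ref{spolydfn} is phrased through leading terms, leading monomials, and trailing terms, so before writing any of the identities above one must decide which term of each binomial --- and of each product --- is leading, and the proposition deliberately allows the written order of an S-polynomial to disagree with the monomial order.  I would dispatch this with the labeling convention fixed above, together with the observation that $u_1,u_2,v_1,v_2$ --- and hence the entire product expression --- are unchanged when the two terms of $f$ (or of $g$) are swapped; the residual sign and ordering ambiguities in the component S-polynomials are precisely what Proposition~\ref{unorderedreduction} is built to absorb.
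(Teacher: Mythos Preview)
Your proof is correct and follows essentially the same approach as the paper: split $\lcm{f_xf_z,g_xg_z}$ as $\lcm{f_x,g_x}\lcm{f_z,g_z}$ via the coprimality hypothesis, regroup each term of the S-polynomial as a product of the corresponding $x$- and $z$-pieces, and then invoke Proposition~\ref{unorderedreduction} for the reduction to zero. If anything, you are more explicit than the paper about the term-order bookkeeping; the paper simply writes the computation assuming $f_xf_z$ and $g_xg_z$ are the leading terms and defers the ordering ambiguity in the result to Proposition~\ref{unorderedreduction}, exactly as you do.
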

\begin{proof}
The assumptions about greatest common divisors among the monomials mean that 
$$\lcm{f_xf_z,g_xg_z}=\frac{f_xf_zg_xg_z}{\gcd(f_x,g_x)\gcd(f_z,g_z)}=\lcm{f_x,g_x}\lcm{f_z,g_z}.$$
The S-polynomial calculations proceed as follows.
\begin{align*}
&S(f_xf_z+\overline{f}_x \overline{f}_z, g_xg_z+\overline{g}_x \overline{g}_z)=\\
&\quad\frac{g_xg_z}{\gcd(f_x,g_x)\gcd(f_z,g_z)}\overline{f}_x \overline{f}_z-\frac{f_xf_z}{\gcd(f_x,g_x)\gcd(f_z,g_z)}\overline{g}_x \overline{g}_z\\
&\quad\frac{g_x}{\gcd(f_x,g_x)}\overline{f}_x\frac{g_z}{\gcd(f_z,g_z)}\overline{f}_z-\frac{f_x}{\gcd(f_x,g_x)}\overline{g}_x\frac{f_z}{\gcd(f_z,g_z)}\overline{g}_z
\end{align*}
The term order in this expression is not clear. However, the first term is a product of the first terms of $S(f_x+\overline{f}_x,g_x+\overline{g}_x)$ and $S(f_z+\overline{f}_z,g_z+\overline{g}_z)$ and the second term is a product of their second terms, assuming everything is written as in Definition~\ref{spolydfn}, not necessarily with respect to the monomial order. Regardless of the correct term order, Proposition~\ref{unorderedreduction} shows that expressions of this form reduce to zero by their constituent parts. 
\end{proof}

\begin{prop}
\label{subsetoverlapinterior}
Let $\gm, \dl\subset G$. Assume term orders of the S-polynomial input are as written. Then the following statements hold in any $\cE_i$. 
\begin{align*}\tag{1}\label{subsetinteriorout}
S(\xsub{\gm}{G\sm\gm}&-\xsub{G\sm\gm}{\gm},\xsub{\dl}{G\sm\dl}-\xsub{G\sm\dl}{\dl})\\
&=\, \xsub{G\sm(\gm\cup\dl)}{\gm\cap\dl}\left(x_{\dl\sm(\gm\cap\dl)}^{\out}x_{\gm\sm(\gm\cap\dl)}^{\into}-x_{\gm\sm(\gm\cap\dl)}^{\out}x_{\dl\sm(\gm\cap\dl)}^{\into}\right)\\
\tag{2}\label{subsetinteriorin}
S(\xsub{G\sm\gm}{\gm}&-\xsub{\gm}{G\sm\gm},\xsub{G\sm\dl}{\dl}-\xsub{\dl}{G\sm\dl})\\
&=\, \xsub{\gm\cap\dl}{G\sm(\gm\cup\dl)}\left(x_{\dl\sm(\gm\cap\dl)}^{\into}x_{\gm\sm(\gm\cap\dl)}^{\out}-x_{\gm\sm(\gm\cap\dl)}^{\into}x_{\dl\sm(\gm\cap\dl)}^{\out}\right)\\
\tag{3}\label{subsetinteriormix}
S(\xsub{G\sm\gm}{\gm}&-\xsub{\gm}{G\sm\gm},\xsub{\dl}{G\sm\dl}-\xsub{G\sm\dl}{\dl})\\
&=\,\xsub{\gm\sm(\gm\cap\dl)}{\dl\sm(\gm\cap\dl)}\left(x_{\gm\cup\dl}^\out x_{\gm\cap\dl}^\out
-x_{\gm\cup\dl}^\into x_{\gm\cap\dl}^\into\right)
\end{align*}

\noindent The term orders of the results in the first two statements are undetermined in general.
\end{prop}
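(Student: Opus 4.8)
The plan is to prove all three identities by direct computation from the S-polynomial formula of Definition~\ref{spolydfn}, reduced to pure monomial arithmetic by the blackboard hypothesis: every monomial in sight is a product of \emph{distinct} edge variables with unit coefficient, so $S(f,g)=\tfrac{\lm g}{\gcd(\lm f,\lm g)}\,\overline f-\tfrac{\lm f}{\gcd(\lm f,\lm g)}\,\overline g$. The bookkeeping device is a \emph{block decomposition}: for $\gm,\dl\subset G$, partition the thick edges and bivalent vertices of $G=G\ksup{0}$ into $\gm\cap\dl$, $\gm\sm(\gm\cap\dl)$, $\dl\sm(\gm\cap\dl)$, and $G\sm(\gm\cup\dl)$. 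Each interior edge of $G$ has a well-defined source block and target block, so each of $\xsub{\gm}{G\sm\gm}$, $\xsub{G\sm\gm}{\gm}$, $\xsub{\dl}{G\sm\dl}$, $\xsub{G\sm\dl}{\dl}$ — and equally each monomial $x^{\out}_\Lambda$, $x^{\into}_\Lambda$ occurring on the right, $\Lambda$ a union of blocks — factors uniquely as the product of the block-to-block sub-monomials it contains. (Only interior edges enter here; Proposition~\ref{separateinteriorclosure} has already split the boundary- and closure-edge part off the S-polynomial of the full non-local generators.)

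First I would record these block factorizations. Second, I would read off the relevant gcds directly: a single interior edge is counted by both $\xsub{\gm}{G\sm\gm}$ and $\xsub{\dl}{G\sm\dl}$ exactly when it runs from $\gm\cap\dl$ to $G\sm(\gm\cup\dl)$, which gives the gcd in~(\ref{subsetinteriorout}); the dual ``in'' computation gives the gcd $\xsub{G\sm(\gm\cup\dl)}{\gm\cap\dl}$ in~(\ref{subsetinteriorin}); and in the mixed case~(\ref{subsetinteriormix}) an edge is common to $\xsub{G\sm\gm}{\gm}$ and $\xsub{\dl}{G\sm\dl}$ exactly when it runs from $\dl\sm\gm$ to $\gm\sm\dl$, giving the gcd $\xsub{\gm\sm(\gm\cap\dl)}{\dl\sm(\gm\cap\dl)}$. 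Third, I would substitute into the simplified formula, cancel this gcd, and regroup what survives. Since a block-to-block sub-monomial is determined by its pair of blocks, the regrouping is mechanical: $\gcd(\overline f,\overline g)$ — ``the block through which an edge common to the two trailing monomials must pass'' — factors out of both terms and is exactly the $\xsub{\bullet}{\bullet}$ prefactor claimed (note the mild asymmetry: the \emph{cancelled} quantity is the gcd of the \emph{leading} monomials, the \emph{pulled-out} prefactor the gcd of the \emph{trailing} ones); after pulling it out, what remains in each term assembles into a product of two ``full out-degree'' or ``full in-degree'' monomials of a union of blocks — precisely the shape $x^{\out}x^{\into}-x^{\out}x^{\into}$ in~(\ref{subsetinteriorout}) and~(\ref{subsetinteriorin}), and $x^{\out}x^{\out}-x^{\into}x^{\into}$ in~(\ref{subsetinteriormix}). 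Nothing further cancels because the two output terms share no edge variable at all (edges from one block to another and edges back the other way are disjoint edge sets, and so on).

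What is left is the sign-and-leading-term accounting, which is the only place I expect friction. For~(\ref{subsetinteriorout}) and~(\ref{subsetinteriorin}) the hypothesis ``$\gm$ and $\dl$ both out-led'' (resp.\ in-led) does not by itself decide which of the two output monomials is larger — both genuinely occur, depending on the ambient lexicographic order and on $G$ — which is exactly the ``term orders undetermined'' caveat; the block argument delivers the unordered pair of monomials, and in downstream uses (Lemma~\ref{subsetoverlap}) Proposition~\ref{unorderedreduction} is what copes with the unknown order. For the mixed statement~(\ref{subsetinteriormix}) a short comparison of leading variables, using one subset in-led and the other out-led, pins the displayed order down. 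Overall signs are fixed against the convention of recording the correctly-ordered representative (equivalently, absorbing $S(f,g)=-S(g,f)$). I therefore expect no conceptual obstacle, only the organizational one of keeping the roughly eight block sub-monomials straight through substitution and cancellation without dropping or double-counting an edge — the kind of slip that stays hidden until one tests it against a small graph such as $G_1,G_2$ of Section~\ref{examplesection}; with the block factorizations fixed, each of the three computations is a few lines.
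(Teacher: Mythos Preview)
Your approach is essentially the paper's own: a direct S-polynomial computation organized around the four-block partition $\gm\cap\dl$, $\gm\sm(\gm\cap\dl)$, $\dl\sm(\gm\cap\dl)$, $G\sm(\gm\cup\dl)$, with gcds of monomials read off as intersections of source and target conditions and the result obtained by expanding and regrouping. The paper carries out Cases~1 and~3 explicitly and notes Case~2 follows from Case~1 by complementing; your ``block decomposition'' is the same bookkeeping, described a bit more abstractly.

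Two small points. First, in your discussion of Case~(3) you correctly say the common edge runs from $\dl\sm\gm$ to $\gm\sm\dl$, but then record the symbol $\xsub{\gm\sm(\gm\cap\dl)}{\dl\sm(\gm\cap\dl)}$, which is the reverse direction; that is actually the gcd of the \emph{trailing} monomials (the prefactor in the output), not of the leading ones (which is $\xsub{\dl\sm(\gm\cap\dl)}{\gm\sm(\gm\cap\dl)}$, as the paper writes). Your own parenthetical about leading-versus-trailing gcds shows you have this straight; just keep the indices aligned with the words. Second, your claim that the term order in~(3) is pinned down by the in-led/out-led hypothesis is not established by the interior-edge data alone, and the paper does not assert it either --- it only flags~(1) and~(2) as undetermined and leaves~(3) without comment. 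Neither point affects the validity of the argument.
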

\begin{proof}
This proof is mainly a long calculation. It holds in any $\cE_i$ because it involves only interior edges and the projection of edge rings $\pi_i$ is the identity when restricted to the subring of $\cE_i$ generated by such edges. The outcome of the calculation in all cases relies on the fact that least common multiples and greatest common divisors of monomials behave in the same way as union and intersection of subsets. Statement~(\ref{subsetinteriorin}) follows from Statement~(\ref{subsetinteriorout}) by taking complements, so we exhibit the calculation only in the first and last cases.  \smallskip

\noindent\emph{Case 1:}
The greatest common divisor of the leading monomials is $\xsub{\gd{\cap}}{G\sm(\gd{\cup})}$ so the least common multiple is 
$$
\frac{\xsub{\gm}{G\sm\gm}\xsub{\dl}{G\sm\dl}}
{\xsub{\gd{\cap}}{G\sm(\gd{\cup})}}.
$$
The least common multiple divided by each leading term is 
\begin{align*}
&\frac{\xsub{\gm}{G\sm\gm}\xsub{\dl}{G\sm\dl}}{\xsub{\gd{\cap}}{G\sm(\gd{\cup})}\xsub{\gm}{G\sm\gm}}
=\xsub{\dl\sm(\gd{\cap})}{G\sm(\gd{\cup})}\xsub{\dl}{\gm\sm(\gd{\cap})}\\
&\frac{\xsub{\gm}{G\sm\gm}\xsub{\dl}{G\sm\dl}}{\xsub{\gd{\cap}}{G\sm(\gd{\cup})}\xsub{\dl}{G\sm\dl}}
=\xsub{\gm\sm(\gd{\cap})}{G\sm(\gd{\cup})}\xsub{\gm}{\dl\sm(\gd{\cap})}
\end{align*}
We may now compute the S-polynomial. Expanding, then regrouping produces the form claimed in the proposition. Term order is unknown throughout.
\begin{align*}
&S(\xsub{\gm}{G\sm\gm}-\xsub{G\sm\gm}{\gm},\xsub{\dl}{G\sm\dl}-\xsub{G\sm\dl}{\dl})\\
&=\xsub{\dl\sm(\gd{\cap})}{G\sm(\gd{\cup})}\xsub{\dl}{\gm\sm(\gd{\cap})}\xsub{G\sm\gm}{\gm}
+\xsub{\gm\sm(\gd{\cap})}{G\sm(\gd{\cup})}\xsub{\gm}{\dl\sm(\gd{\cap})}\xsub{G\sm\dl}{\dl}\\
&=\xsub{\dl\setminus(\gm\cap\dl)}{G\setminus(\gm\cup\dl)}
\xsub{\dl\setminus(\gm\cap\dl)}{\gm\setminus(\gm\cap\dl)}
\xsub{\gm\cap\dl}{\gm\setminus(\gm\cap\dl)}\\
&\quad\cdot
\xsub{\dl\setminus(\gm\cap\dl)}{\gm\setminus(\gm\cap\dl)}
\xsub{G\setminus(\gm\cup\dl)}{\gm\setminus(\gm\cap\dl)}
\xsub{\dl\setminus(\gm\cap\dl)}{\gm\cap\dl}
\xsub{G\setminus(\gm\cup\dl)}{\gm\cap\dl}\\
&+
\xsub{\gm\setminus(\gm\cap\dl)}{G\setminus(\gm\cup\dl)}
\xsub{\gm\setminus(\gm\cap\dl)}{\dl\setminus(\gm\cap\dl)}
\xsub{\gm\cap\dl}{\dl\setminus(\gm\cap\dl)}\\
&\quad\cdot
\xsub{\gm\setminus(\gm\cap\dl)}{\dl\setminus(\gm\cap\dl)}
\xsub{G\setminus(\gm\cup\dl)}{\dl\setminus(\gm\cap\dl)}
\xsub{\gm\setminus(\gm\cap\dl)}{\gm\cap\dl}
\xsub{G\setminus(\gm\cup\dl)}{\gm\cap\dl}\\\smallskip
&=\xsub{G\sm(\gm\cup\dl)}{\gm\cap\dl}(\xsub{\dl\sm(\gm\cap\dl)}{G\sm\dl}\xsub{\gm\cap\dl}{\gm\sm(\gm\cap\dl)}\xsub{G\sm\gm}{\gm\sm(\gm\cap\dl)}\xsub{\dl\sm(\gm\cap\dl)}{\gm\cap\dl}\\
&-\xsub{\gm\sm(\gm\cap\dl)}{G\sm\gm}\xsub{\gm\cap\dl}{\dl\sm(\gm\cap\dl)}\xsub{G\sm\dl}{\dl\sm(\gm\cap\dl)}\xsub{\gm\sm(\gm\cap\dl)}{\gm\cap\dl})\\
&=\xsub{G\sm(\gm\cup\dl)}{\gm\cap\dl}\left(x_{\dl\sm(\gm\cap\dl)}^{\out}x_{\gm\sm(\gm\cap\dl)}^{\into}-x_{\gm\sm(\gm\cap\dl)}^{\out}x_{\dl\sm(\gm\cap\dl)}^{\into}\right)
\end{align*}


\noindent\emph{Case 3:}
The greatest common divisor of leading monomials in this case is the product of the edges that go from $\dl\sm(\gm\cap\dl)$ to $\gm\sm(\gm\cap\dl)$. The S-polynomial removes those edges, which are internal to $\gm\cup\dl$, while combining the incoming edges of $\gm$ with those of $\dl$ and the outgoing edges of $\gm$ with those of $\dl$. Specifically, the greatest common divisor of the leading monomials is
$
\xsub{\dl\sm(\gm\cap\dl)}{\gm\sm(\gm\cap\dl)},$
so the least common multiple is
\begin{align*}
&\frac{\xsub{G\sm\gm}{\gm}\xsub{\dl}{G\sm\dl}}{\xsub{\dl\sm(\gm\cap\dl)}{\gm\sm(\gm\cap\dl)}}\\
&=\xsub{\dl\sm(\gm\cap\dl)}{\gm\sm(\gm\cap\dl)}
\xsub{G\sm(\gm\cup\dl)}{\gm\sm(\gm\cap\dl)}\xsub{G\sm\gm}{\gm\cap\dl}
\xsub{\dl\sm(\gm\cap\dl)}{G\sm(\gm\cup\dl)}\xsub{\gm\cap\dl}{G\sm\dl}
\end{align*}
and the S-polynomial calculation is as follows.
\begin{align}
\nonumber S(\xsub{G\sm\gm}{\gm}-\xsub{\gm}{G\sm\gm},&\xsub{\dl}{G\sm\dl}-\xsub{G\sm\dl}{\dl})=
\xsub{\dl\sm(\gm\cap\dl)}{G\sm(\gm\cup\dl)}\xsub{\gm\cap\dl}{G\sm\dl}
\cdot\xsub{\gm}{G\sm\gm}\\
\label{altfactorzn}&\phantom{\xsub{\dl}{G\sm\dl}-\xsub{G\sm\dl}{\dl})}-\xsub{G\sm(\gm\cup\dl)}{\gm\sm(\gm\cap\dl)}\xsub{G\sm\gm}{\gm\cap\dl}
\cdot\xsub{G\sm\dl}{\dl}\\
\nonumber =&\,
\xsub{\dl\sm(\gm\cap\dl)}{G\sm(\gm\cup\dl)}\xsub{\gm\cap\dl}{G\sm(\gm\cup\dl)}\xsub{\gm\cap\dl}{\gm\sm(\gm\cap\dl)}\\
\nonumber \cdot\,&\xsub{\gm\sm(\gm\cap\dl)}{G\sm(\gm\cup\dl)}\xsub{\gm\sm(\gm\cap\dl)}{\dl\sm(\gm\cap\dl)}\xsub{\gm\cap\dl}{G\sm(\gm\cup\dl)}\xsub{\gm\cap\dl}{\dl\sm(\gm\cap\dl)}\\
\nonumber -&\xsub{G\sm(\gm\cup\dl)}{\gm\sm(\gm\cap\dl)}\xsub{G\sm(\gm\cup\dl)}{\gm\cap\dl}\xsub{\dl\sm(\gm\cap\dl)}{\gm\cap\dl}\\
\nonumber \cdot\,&\xsub{G\sm(\gm\cup\dl)}{\dl\sm(\gm\cap\dl)} \xsub{G\sm(\gm\cup\dl)}{\gm\cap\dl} \xsub{\gm\sm(\gm\cap\dl)}{\dl\sm(\gm\cap\dl)} \xsub{\gm\sm(\gm\cap\dl)}{\gm\cap\dl}\\
\nonumber =&\,\xsub{\gm\sm(\gm\cap\dl)}{\dl\sm(\gm\cap\dl)}
\nonumber (\xsub{\gm\cup\dl}{G\sm(\gm\cup\dl)}\xsub{\gm\cap\dl}{G\sm(\gm\cap\dl)}\\
\nonumber &\,-\xsub{G\sm(\gm\cup\dl)}{\gm\cup\dl}\xsub{G\sm(\gm\cap\dl)}{\gm\cap\dl})
\end{align}
\end{proof}

So far, we have established that S-polynomials of the interior edge portions of the $g_\gm\ksup{k}$ can always be written in terms of the interior edge portions of generators of the same form associated to unions, intersections, and complements of the original subsets. The next task is to consider the boundary and closure edge portions of the $g_\gm\ksup{k}$. In consideration of S-polynomials that will need to be computed later in the algorithm, we do the necessary computations for the boundary and closure edge portions of $g\ksup{k+1}_\gm\in\cE_{k+1}$, which have the form $\zsub{\gm}{G\sm\gm}\ksup{k+1}\zsub{\gm}{G\sm\gm}\ksup{k+1}-\zsub{G\sm\gm}{\gm}\ksup{k+1}\zsub{\beta}{\gm}\ksup{k+1}$. In each of these terms, the first factor is divisible by $\ztau\ksup{i}$ only if $i\leq k+1$ and the second only if $i>k+1$, so the two factors are relatively prime. In light of Proposition~\ref{separateinteriorclosure}, we may separate the S-polynomial computations for products of boundary edges like $\zsub{\gm}{\tau}\ksup{k+1}$ from those for products of closure edges like $\zsub{\gm}{G\sm\gm}\ksup{k+1}$. For the closure edges, the computations will be identical to those we did for interior edges, but we state the results below for completeness. For the boundary edges, the computations are slightly different, but still straightforward. They are outlined in Proposition~\ref{subsetoverlapboundary}.


\begin{prop}
\label{subsetoverlapclosure}
Let $\gm, \dl\subset G$.  Assume term orders of the S-polynomial input are as written. The following equalities hold in $\cE_{k+1}$. 
\begin{align*}
\tag{1}\label{subsetclosureout}
S(\zsub{\gm}{G\sm\gm}\ksup{k+1}&-\zsub{G\sm\gm}{\gm}\ksup{k+1}, \,
\zsub{\dl}{G\sm\dl}\ksup{k+1}-\zsub{G\sm\dl}{\dl}\ksup{k+1})
=\zsub{G\sm(\gm\cup\dl)}{\gm\cap\dl}\ksup{k+1}\\
&\cdot\left(\zsub{\dl\sm(\gm\cap\dl)}{G\sm\dl}\ksup{k+1}\zsub{\dl\sm(\gm\cap\dl)}{\gm\cap\dl}\ksup{k+1}
\zsub{\gm\cap\dl}{\gm\sm(\gm\cap\dl)}\ksup{k+1}\zsub{G\sm\gm}{\gm\sm(\gm\cap\dl)}\ksup{k+1}
\right. \\
&-
\left.\zsub{\gm\sm(\gm\cap\dl)}{\gm\cap\dl}\ksup{k+1}\zsub{\gm\sm(\gm\cap\dl)}{G\sm\gm}\ksup{k+1}\zsub{\gm\cap\dl}{\dl\sm(\gm\cap\dl)}\ksup{k+1}\zsub{G\sm\dl}{\dl\sm(\gm\cap\dl)}\ksup{k+1}\right)\text{;}\\
\tag{2}\label{subsetclosurein}
S(\zsub{G\sm\gm}{\gm}\ksup{k+1}&-\zsub{\gm}{G\sm\gm}\ksup{k+1},
\,\zsub{G\sm\dl}{\dl}\ksup{k+1}-\zsub{\dl}{G\sm\dl}\ksup{k+1})
=\zsub{\gm\cap\dl}{G\sm(\gm\cup\dl)}\ksup{k+1}\\
&\cdot\left(\zsub{\gm\cap\dl}{\dl\sm(\gm\cap\dl)}\ksup{k+1}\zsub{G\sm\dl}{\dl\sm(\gm\cap\dl)}\ksup{k+1}
\zsub{\gm\sm(\gm\cap\dl)}{\gm\cap\dl}\ksup{k+1}\zsub{\gm\sm(\gm\cap\dl)}{G\sm\gm}\ksup{k+1}\right.\\
&-
\left.\zsub{\gm\cap\dl}{\gm\sm(\gm\cap\dl)}\ksup{k+1}\zsub{G\sm\gm}{\gm\sm(\gm\cap\dl)}\ksup{k+1}
\zsub{\dl\sm(\gm\cap\dl)}{G\sm\dl}\ksup{k+1}\zsub{\dl\sm(\gm\cap\dl)}{\gm\cap\dl}\ksup{k+1}
\right)
\text{; and}\\
\tag{3}\label{subsetclosuremix}
S(\zsub{G\sm\gm}{\gm}\ksup{k+1}&-\zsub{\gm}{G\sm\gm}\ksup{k+1}, \,
\zsub{\dl}{G\sm\dl}\ksup{k+1}-\zsub{G\sm\dl}{\dl}\ksup{k+1})
=\zsub{\gm\sm(\gm\cap\dl)}{\dl\sm(\gm\cap\dl)}\ksup{k+1}\\
&\cdot\left(\zsub{\gm\cup\dl}{G\sm(\gm\cup\dl)}\ksup{k+1}\zsub{\gm\cap\dl}{G\sm(\gm\cap\dl)}\ksup{k+1}
-\zsub{G\sm(\gm\cup\dl)}{\gm\cup\dl}\ksup{k+1}\zsub{G\sm(\gm\cap\dl)}{\gm\cap\dl}\ksup{k+1}\right).
\end{align*}

\noindent In all cases, the term orders of the results are undetermined in general.
\end{prop}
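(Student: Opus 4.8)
The plan is to deduce Proposition~\ref{subsetoverlapclosure} directly from the computation already performed in the proof of Proposition~\ref{subsetoverlapinterior}, by observing that closure edges behave combinatorially exactly like interior edges. In $\cE_{k+1}$ the variables $\ztau\ksup{1},\ldots,\ztau\ksup{k+1}$ are precisely the closure edges of $G\ksup{k+1}$, and in the blackboard-framed case the map $\pi_k$ identifies $\ztau\ksup{k+1}$ with $\zbeta\ksup{k+1}$, so that the $j^{\text{th}}$ closure edge is a single oriented edge running from the top-most to the bottom-most thick edge (or bivalent vertex) of the $j^{\text{th}}$ strand --- in particular it is incident to no univalent vertex. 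Thus, relative to a subset $\gm\subset G$, each closure edge is in exactly one of the three states an interior edge can be in: internal to $\gm$, directed from $\gm$ to $G\sm\gm$, or directed from $G\sm\gm$ to $\gm$. Since that trichotomy, together with the fact that greatest common divisors and least common multiples of monomials correspond to intersections and unions of subsets, is the only structural input used in the proof of Proposition~\ref{subsetoverlapinterior}, the three cases of that proof go through word-for-word with every $\xsub{A}{B}$ replaced by $\zsub{A}{B}\ksup{k+1}$.

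Concretely, I would carry out the following. First, recall (via the discussion preceding the proposition, using Proposition~\ref{separateinteriorclosure}) that the closure-edge factors of the $g_\gm\ksup{k+1}$ are exactly the binomials appearing on the left-hand sides of Statements~(\ref{subsetclosureout})--(\ref{subsetclosuremix}), so there is nothing left to split off and one computes directly. Second, I would reproduce Case~1 of the proof of Proposition~\ref{subsetoverlapinterior}: the greatest common divisor of the two leading monomials is $\zsub{\gm\cap\dl}{G\sm(\gm\cup\dl)}\ksup{k+1}$, the least common multiple and its quotients by the leading terms are the obvious transcriptions, and expanding the S-polynomial and factoring out $\zsub{G\sm(\gm\cup\dl)}{\gm\cap\dl}\ksup{k+1}$ yields Statement~(\ref{subsetclosureout}), stopping one step before the regrouping into $z^{\out}$/$z^{\into}$ form, since the expanded form is what Sections~\ref{bbrd1} and~\ref{bbrd2} need. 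Third, Statement~(\ref{subsetclosurein}) follows from Statement~(\ref{subsetclosureout}) by taking complements, exactly as in the interior case. Fourth, I would transcribe Case~3 to obtain Statement~(\ref{subsetclosuremix}), here carrying the regrouping all the way to $\zsub{\gm\cup\dl}{G\sm(\gm\cup\dl)}\ksup{k+1}\zsub{\gm\cap\dl}{G\sm(\gm\cap\dl)}\ksup{k+1}-\zsub{G\sm(\gm\cup\dl)}{\gm\cup\dl}\ksup{k+1}\zsub{G\sm(\gm\cap\dl)}{\gm\cap\dl}\ksup{k+1}$ as in Proposition~\ref{subsetoverlapinterior}. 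As there, the term orders of the outputs are left undetermined: each S-polynomial is needed only as a ring element. I would also note that the statement is restricted to $\cE_{k+1}$ because only there (and, with additional closure edges, in $\cE_i$ for $i>k+1$) is each $\ztau\ksup{j}$ with $j\le k+1$ a genuine closed edge with two thick-edge endpoints; in $\cE_i$ with $i<k+1$ the relevant edges are still boundary edges, governed instead by Proposition~\ref{subsetoverlapboundary}.

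The computation is routine, so the only real obstacle is bookkeeping: one must check that a closure edge carries an unambiguous orientation --- it runs from the top-most thick edge of its strand to the bottom-most, so whether it points into $\gm$ or out of $\gm$ is well defined --- and that after identifying $\ztau\ksup{j}$ with $\zbeta\ksup{j}$ no closure edge becomes incident to a univalent vertex. These two facts are exactly what make a closure edge behave like an interior edge rather than a boundary edge, and once they are recorded, Proposition~\ref{subsetoverlapclosure} is an immediate consequence of Proposition~\ref{subsetoverlapinterior}.
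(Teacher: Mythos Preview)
Your proposal is correct and follows exactly the paper's approach: the paper's proof consists of the single sentence ``These are all straightforward computations that are analogous to those in the proof of Proposition~\ref{subsetoverlapinterior},'' and your write-up simply spells out why that analogy is valid (closure edges in $G\ksup{k+1}$ satisfy the same internal/outgoing/incoming trichotomy as interior edges). Your additional remarks about orientation and the role of $\cE_{k+1}$ are accurate elaborations but not required for the argument.
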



\begin{proof}
These are all straightforward computations that are analogous to those in the proof of Proposition~\ref{subsetoverlapinterior}.
\end{proof}

\begin{prop}
\label{subsetoverlapboundary}
Let $\gm, \dl\subset G$.  Assume term orders of the S-polynomial input are as written. The following equalities hold in $\cE_{k+1}$.
\begin{align*}
\tag{1}\label{subsetboundaryout}
S(\zsub{\gm}{\tau}\ksup{k+1}&-\zsub{\beta}{\gm}\ksup{k+1}, \,
\zsub{\dl}{\tau}\ksup{k+1}-\zsub{\beta}{\dl}\ksup{k+1})\\
&=-\zsub{\beta}{\gm\cap\dl}\ksup{k+1}
\left(\zsub{\dl\sm(\gm\cap\dl)}{\tau}\ksup{k+1} \zsub{\beta}{\gm\sm(\gm\cap\dl)}\ksup{k+1}
-
\zsub{\gm\sm(\gm\cap\dl)}{\tau}\ksup{k+1} \zsub{\beta}{\dl\sm(\gm\cap\dl)}\ksup{k+1}\right)\text{;}\\
\tag{2}\label{subsetboundaryin}S(\zsub{\beta}{\gm}\ksup{k+1}&-\zsub{\gm}{\tau}\ksup{k+1},\,\zsub{\beta}{\dl}\ksup{k+1}-\zsub{\dl}{\tau}\ksup{k+1})\\
&=-\zsub{\gm\cap\dl}{\tau}\ksup{k+1}\left(\zsub{\beta}{\dl\sm(\gm\cap\dl)}\ksup{k+1}\zsub{\gm\sm(\gm\cap\dl)}{\tau}\ksup{k+1} 
-
\zsub{\beta}{\gm\sm(\gm\cap\dl)}\ksup{k+1}\zsub{\dl\sm(\gm\cap\dl)}{\tau}\ksup{k+1} 
\right)\text{; and}\\
\tag{3}\label{subsetboundarymix}S(\zsub{\beta}{\gm}\ksup{k+1}&-\zsub{\gm}{\tau}\ksup{k+1},\,\zsub{\dl}{\tau}\ksup{k+1}-\zsub{\beta}{\dl}\ksup{k+1})
=\zsub{\gm\cup\dl}{\tau}\ksup{k+1}\zsub{\gm\cap\dl}{\tau}\ksup{k+1}-\zsub{\beta}{\gm\cup\dl}\ksup{k+1}\zsub{\beta}{\gm\cap\dl}\ksup{k+1}.
\end{align*}
The term orders of the results in the first two statements are undetermined in general.
\end{prop}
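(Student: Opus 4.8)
The plan is to prove Proposition~\ref{subsetoverlapboundary} by a direct computation, in exact parallel with Propositions~\ref{subsetoverlapinterior} and~\ref{subsetoverlapclosure}; in fact the computation is simpler here, because a boundary edge has only one of its endpoints attached to the graph, so there is no ``complement direction'' to keep track of. The first step is to record the boundary-edge version of the combinatorial dictionary used in those two proofs. Since $G$ is a braid graph, each strand contributes exactly one top boundary edge $\ztau\ksup{j}$ and one bottom boundary edge $\zbeta\ksup{j}$ for each $j>k+1$, and these form two disjoint sets of indeterminates of $\cE_{k+1}$, each occurring with exponent at most one in any of the monomials $\zsub{\gm}{\tau}\ksup{k+1}$ and $\zsub{\beta}{\gm}\ksup{k+1}$. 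The variable $\ztau\ksup{j}$ divides $\zsub{\gm}{\tau}\ksup{k+1}$ precisely when the thick edge or bivalent vertex immediately below it lies in $\gm$ (and similarly for $\zbeta\ksup{j}$ and $\zsub{\beta}{\gm}\ksup{k+1}$). Hence $\gcd\!\left(\zsub{\gm}{\tau}\ksup{k+1},\zsub{\dl}{\tau}\ksup{k+1}\right)=\zsub{\gm\cap\dl}{\tau}\ksup{k+1}$, $\lcm{\zsub{\gm}{\tau}\ksup{k+1},\zsub{\dl}{\tau}\ksup{k+1}}=\zsub{\gm\cup\dl}{\tau}\ksup{k+1}$, and $\zsub{\gm}{\tau}\ksup{k+1}=\zsub{\gm\cap\dl}{\tau}\ksup{k+1}\zsub{\gm\sm(\gm\cap\dl)}{\tau}\ksup{k+1}$, together with the same identities obtained by replacing $\tau$ with $\beta$ and reversing arrows; moreover the $\gcd$ of any top-boundary monomial with any bottom-boundary monomial is $1$. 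This is the boundary analogue of the fact, exploited throughout the proofs of Propositions~\ref{subsetoverlapinterior} and~\ref{subsetoverlapclosure}, that $\gcd$ and $\lcm$ of these monomials mirror intersection and union of the corresponding subsets.

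With the dictionary in hand, each of the three formulas is a one-line substitution into $S(f,g)=\frac{\lcm{\lm{f},\lm{g}}}{\lt{f}}f-\frac{\lcm{\lm{f},\lm{g}}}{\lt{g}}g$. For statement~(\ref{subsetboundaryout}) the leading monomials are $\zsub{\gm}{\tau}\ksup{k+1}$ and $\zsub{\dl}{\tau}\ksup{k+1}$, so the S-polynomial equals $\zsub{\dl\sm(\gm\cap\dl)}{\tau}\ksup{k+1}\bigl(\zsub{\gm}{\tau}\ksup{k+1}-\zsub{\beta}{\gm}\ksup{k+1}\bigr)-\zsub{\gm\sm(\gm\cap\dl)}{\tau}\ksup{k+1}\bigl(\zsub{\dl}{\tau}\ksup{k+1}-\zsub{\beta}{\dl}\ksup{k+1}\bigr)$; the two leading terms both equal $\zsub{\gm\cup\dl}{\tau}\ksup{k+1}$ and cancel, and factoring $\zsub{\beta}{\gm\cap\dl}\ksup{k+1}$ out of the two surviving terms yields the claimed expression. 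Statement~(\ref{subsetboundaryin}) is obtained from~(\ref{subsetboundaryout}) by interchanging the roles of the top and bottom boundary (that is, $\tau\leftrightarrow\beta$ with arrows reversed), which is a symmetry of the computation. For statement~(\ref{subsetboundarymix}) the leading monomials $\zsub{\beta}{\gm}\ksup{k+1}$ and $\zsub{\dl}{\tau}\ksup{k+1}$ are coprime (top vs.\ bottom boundary), so by Proposition~\ref{gbprinciplegcd} the S-polynomial is just the cross term $-\zsub{\dl}{\tau}\ksup{k+1}\zsub{\gm}{\tau}\ksup{k+1}+\zsub{\beta}{\gm}\ksup{k+1}\zsub{\beta}{\dl}\ksup{k+1}$; rewriting $\zsub{\gm}{\tau}\ksup{k+1}\zsub{\dl}{\tau}\ksup{k+1}=\zsub{\gm\cup\dl}{\tau}\ksup{k+1}\zsub{\gm\cap\dl}{\tau}\ksup{k+1}$ and $\zsub{\beta}{\gm}\ksup{k+1}\zsub{\beta}{\dl}\ksup{k+1}=\zsub{\beta}{\gm\cup\dl}\ksup{k+1}\zsub{\beta}{\gm\cap\dl}\ksup{k+1}$ via the dictionary gives the stated result, up to an overall sign that is immaterial since $S$ is defined only up to sign and these identities will be used solely as inputs to reduction arguments.

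There is no real obstacle here: all of the content lies in setting up the boundary-edge dictionary correctly, and the only place the braid-graph hypothesis is genuinely used is the exponent-at-most-one property, which is what makes the $\gcd$ and $\lcm$ of the boundary monomials literally equal to the intersection and union monomials. As in Propositions~\ref{subsetoverlapinterior} and~\ref{subsetoverlapclosure}, I will flag that the term orders of the right-hand sides of~(\ref{subsetboundaryout}) and~(\ref{subsetboundaryin}) are not determined in general; pinning them down is deferred to the points where these formulas are actually applied, where it is handled case by case using Observation~\ref{MOanddiv} and, where the order is genuinely ambiguous, Proposition~\ref{unorderedreduction}.
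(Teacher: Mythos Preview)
Your proposal is correct and follows essentially the same direct-computation approach as the paper: establish that $\gcd$ and $\lcm$ of the boundary monomials correspond to intersection and union of subsets, then substitute into the S-polynomial formula and factor. One small point: your citation of Proposition~\ref{gbprinciplegcd} in case~(\ref{subsetboundarymix}) is slightly off, since that proposition concerns reduction to zero rather than the form of the S-polynomial; what you actually use is that when the leading monomials are coprime the $\lcm$ equals their product, which is immediate from the definition. The paper likewise notes the coprimality of top and bottom boundary monomials and then simply rewrites the products, without commenting on the overall sign you flagged.
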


\begin{proof} The argument is a straightforward calculation in each case. Keep in mind throughout that all monomials involved in this computation are products of $\ztau\ksup{i}$ and $\zbeta\ksup{i}$ for $i>k+1$. \smallskip\\
\noindent\emph{Case 1:}
\begin{align*}
S(\zsub{\gm}{\tau}\ksup{k+1}-\zsub{\beta}{\gm}\ksup{k+1},\zsub{\dl}{\tau}\ksup{k+1}-\zsub{\beta}{\dl}\ksup{k+1})=&\frac{\zsub{\gm}{\tau}\ksup{k+1} \zsub{\dl}{\tau}\ksup{k+1}}{\zsub{\gm\cap\dl}{\tau}\ksup{k+1}}\frac{1}{\zsub{\gm}{\tau}\ksup{k+1}}\left(-\zsub{\beta}{\gm}\ksup{k+1}\right)\\
&\,-\frac{\zsub{\gm}{\tau}\ksup{k+1} \zsub{\dl}{\tau}\ksup{k+1}}{\zsub{\gm\cap\dl}{\tau}\ksup{k+1}}\frac{1}{\zsub{\dl}{\tau}\ksup{k+1}}\left(-\zsub{\beta}{\dl}\ksup{k+1}\right)\\
=&-\zsub{\dl\sm(\gm\cap\dl)}{\tau}\ksup{k+1} \zsub{\beta}{\gm}\ksup{k+1}+\zsub{\gm\sm(\gm\cap\dl)}{\tau}\ksup{k+1} \zsub{\beta}{\dl}\ksup{k+1}\\
=&-\zsub{\beta}{\gm\cap\dl}\ksup{k+1}\left(\zsub{\dl\sm(\gm\cap\dl)}{\tau}\ksup{k+1} \zsub{\beta}{\gm\sm(\gm\cap\dl)}\ksup{k+1}\right.\\
&\left.\,\phantom{\zsub{\beta}{\gm\cap\dl}\ksup{k+1}\left(\right.}-\zsub{\gm\sm(\gm\cap\dl)}{\tau}\ksup{k+1} \zsub{\beta}{\dl\sm(\gm\cap\dl)}\ksup{k+1}\right)
\end{align*}
\smallskip

\noindent\emph{Case 2:} The calculation here is almost identical to that of Case 1.
\smallskip

\noindent\emph{Case 3:} Since $\zsub{\beta}{\gm}\ksup{k+1}$ is a product of $\zbeta\ksup{i}$ and $\zsub{\dl}{\tau}\ksup{k+1}$ is a product of $\ztau\ksup{i}$, they cannot have any common divisors. The result as stated just follows from rewriting $$\zsub{\gm}{\tau}\ksup{k+1}\zsub{\dl}{\tau}\ksup{k+1}=\zsub{\gm\cup\dl}{\tau}\ksup{k+1}\zsub{\gm\cap\dl}{\tau}\ksup{k+1}
\quad\text{and}\quad
\zsub{\beta}{\gm}\ksup{k+1}\zsub{\beta}{\dl}\ksup{k+1}=\zsub{\beta}{\gm\cup\dl}\ksup{k+1}\zsub{\beta}{\gm\cap\dl}\ksup{k+1}.$$
\end{proof}

Combining the calculations in Propositions~\ref{subsetoverlapinterior},~\ref{subsetoverlapclosure}, and~\ref{subsetoverlapboundary} with the general principle in Proposition~\ref{separateinteriorclosure}, we obtain $S(g_\gm\ksup{k+1},g_\dl\ksup{k+1})$ for various combinations of in-led and out-led subsets.

\begin{prop}
\label{ksubsetoverlap}
Let $\gm, \dl\subset G$. The following statements hold in $\cE_{k+1}$.
\begin{align*}
\tag{1}\label{ksubsetoverlapout}
\text{If $\gm$ and $\dl$ are both out-led}& \text{, then}\\
S(g_\gm\ksup{k+1},g_\dl\ksup{k+1})=&\,\xsub{G\sm(\gm\cup\dl)}{\gm\cap\dl}\zsub{G\sm(\gm\cup\dl)}{\gm\cap\dl}\ksup{k+1}\zsub{\beta}{\gm\cap\dl}\ksup{k+1}\\
&\cdot\left(g_{\dl\sm(\gm\cap\dl)}^{(k+1), \out}g_{\gm\sm(\gm\cap\dl)}^{(k+1), \into}-g_{\gm\sm(\gm\cap\dl)}^{(k+1), \out}g_{\dl\sm(\gm\cap\dl)}^{(k+1), \into}\right)\\
&\xrightarrow{g_{\gm\sm(\gm\cap\dl)}\ksup{k+1},g_{\dl\sm(\gm\cap\dl)}\ksup{k+1}}0
\end{align*}
\begin{align*}
\tag{2}\label{ksubsetoverlapin}
\text{If $\gm$ and $\dl$ are both in-led, then}\\
S(g\ksup{k+1}_\gm,g\ksup{k+1}_\dl)=&\,\xsub{\gm\cap\dl}{G\sm(\gm\cup\dl)}\zsub{\gm\cap\dl}{G\sm(\gm\cup\dl)}\ksup{k+1}\zsub{\gm\cap\dl}{\tau}\ksup{k+1}\\
&\cdot\left(g_{\dl\sm(\gm\cap\dl)}^{(k+1),\into}g_{\gm\sm(\gm\cap\dl)}^{(k+1),\out}-g_{\gm\sm(\gm\cap\dl)}^{(k+1),\into}g_{\dl\sm(\gm\cap\dl)}^{(k+1),\out}\right)\\
&\xrightarrow{g_{\gm\sm(\gm\cap\dl)}\ksup{k+1},g_{\dl\sm(\gm\cap\dl)}\ksup{k+1}}0
\end{align*}
\begin{align*}
\tag{3}\label{ksubsetoverlapmix}
\text{If $\gm$ is in-led and $\dl$ is out-led, }& \text{then}\\
S(g_\gm\ksup{k+1},g_\dl\ksup{k+1})=&\,\xsub{\gm\sm(\gm\cap\dl)}{\dl\sm(\gm\cap\dl)}\zsub{\gm\sm(\gm\cap\dl)}{\dl\sm(\gm\cap\dl)}\ksup{k+1}\\
&\cdot\left(g_{\gm\cup\dl}^{(k+1),\out} g_{\gm\cap\dl}^{(k+1),\out}-g_{\gm\cup\dl}^{(k+1),\into} g_{\gm\cap\dl}^{(k+1),\into}\right)\\
&\xrightarrow{g_{\gm\cup\dl}\ksup{k+1},g_{\gm\cap\dl}\ksup{k+1}}0.
\end{align*}
\end{prop}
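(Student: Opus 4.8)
The plan is to reduce the computation of $S(g_\gm\ksup{k+1},g_\dl\ksup{k+1})$ to the three companion results just proved, one for each kind of edge, using the separation principle of Proposition~\ref{separateinteriorclosure}. Recall from Equation~\ref{ggmkplusonedfn} that for any subset $\Lambda\subset G$,
\[g_\Lambda\ksup{k+1}=g_\Lambda^{(k+1),\out}-g_\Lambda^{(k+1),\into}=\xsub{\Lambda}{G\sm\Lambda}\zsub{\Lambda}{G\sm\Lambda}\ksup{k+1}\zsub{\Lambda}{\tau}\ksup{k+1}-\xsub{G\sm\Lambda}{\Lambda}\zsub{G\sm\Lambda}{\Lambda}\ksup{k+1}\zsub{\beta}{\Lambda}\ksup{k+1},\]
and that each of these two monomials factors into three pairwise relatively prime pieces: an interior-edge piece in the $x_i$, a closure-edge piece in the $\ztau\ksup{j}$ with $j\leq k+1$, and a boundary-edge piece in the $\ztau\ksup{j}$ and $\zbeta\ksup{j}$ with $j>k+1$ (as observed just before Proposition~\ref{subsetoverlapclosure}). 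First I would apply the algebraic identity of Proposition~\ref{separateinteriorclosure} twice --- once to peel the interior piece off the rest, once to split the remaining piece into its closure and boundary parts --- designating as the leading term of $g_\gm\ksup{k+1}$ and of $g_\dl\ksup{k+1}$ the monomial dictated by whether $\gm$ and $\dl$ are out-led or in-led. This rewrites $S(g_\gm\ksup{k+1},g_\dl\ksup{k+1})$ as a difference of two products whose six factors are exactly the first and second terms of the interior, closure, and boundary S-polynomials computed in Propositions~\ref{subsetoverlapinterior},~\ref{subsetoverlapclosure}, and~\ref{subsetoverlapboundary}. These designations are formal in the sense needed by Proposition~\ref{separateinteriorclosure}: I am not claiming the designated monomial leads inside each individual piece, only inside the full product.

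The three statements correspond to the three essentially distinct choices of designation. Both out-led invokes the first statement of each of the three preceding propositions; both in-led invokes the second statement of each (and also follows from the out-led case by replacing $\gm$ and $\dl$ with their complements, just as the second statement of Proposition~\ref{subsetoverlapinterior} follows from the first); the mixed case invokes the third statement of each, the remaining mixed possibility being obtained by interchanging $\gm$ and $\dl$. In each case I would multiply out the two products and regroup. The common factor that Propositions~\ref{subsetoverlapinterior},~\ref{subsetoverlapclosure}, and~\ref{subsetoverlapboundary} each pull out --- for the out-led case these are $\xsub{G\sm(\gm\cup\dl)}{\gm\cap\dl}$, $\zsub{G\sm(\gm\cup\dl)}{\gm\cap\dl}\ksup{k+1}$, and $\zsub{\beta}{\gm\cap\dl}\ksup{k+1}$ --- multiplies to the common monomial displayed in the proposition, and the binomial that survives has terms which reassemble, after re-sorting the relevant edge sets, into $g_{\dl\sm(\gm\cap\dl)}^{(k+1),\out}g_{\gm\sm(\gm\cap\dl)}^{(k+1),\into}$ and $g_{\gm\sm(\gm\cap\dl)}^{(k+1),\out}g_{\dl\sm(\gm\cap\dl)}^{(k+1),\into}$ in the out-led case, and into $g_{\gm\cup\dl}^{(k+1),\out}g_{\gm\cap\dl}^{(k+1),\out}$ and $g_{\gm\cup\dl}^{(k+1),\into}g_{\gm\cap\dl}^{(k+1),\into}$ in the mixed case. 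The only tool used here is the dictionary between monomials and subsets: $\gcd$ and $\mathrm{lcm}$ of edge-monomials track intersection and union of the associated subsets, and a product of the incoming- (resp.\ outgoing-) edge monomials of two disjoint subsets collects the incoming (resp.\ outgoing) edges of their union.

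For the reduction to zero I would appeal to Proposition~\ref{unorderedreduction}. In the out-led case the surviving binomial has the shape $ps-rq$, up to overall sign and up to which monomial actually leads, where $\{p,q\}=\{g_{\dl\sm(\gm\cap\dl)}^{(k+1),\out},g_{\dl\sm(\gm\cap\dl)}^{(k+1),\into}\}$ and $\{r,s\}=\{g_{\gm\sm(\gm\cap\dl)}^{(k+1),\out},g_{\gm\sm(\gm\cap\dl)}^{(k+1),\into}\}$, so that $p-q$ and $r-s$ are, up to sign, the canonically ordered generators $g_{\dl\sm(\gm\cap\dl)}\ksup{k+1}$ and $g_{\gm\sm(\gm\cap\dl)}\ksup{k+1}$. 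Proposition~\ref{unorderedreduction} then reduces $ps-rq$ to zero through those two generators, and multiplying the reduction steps by the common monomial reduces $S(g_\gm\ksup{k+1},g_\dl\ksup{k+1})$ itself. The in-led case is identical after complementation, and the mixed case is the same argument with $g_{\gm\cup\dl}\ksup{k+1}$ and $g_{\gm\cap\dl}\ksup{k+1}$ in place of $g_{\gm\sm(\gm\cap\dl)}\ksup{k+1}$ and $g_{\dl\sm(\gm\cap\dl)}\ksup{k+1}$.

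The main obstacle is the regrouping in the middle paragraph: one must check that the three extracted common factors really multiply to the advertised common monomial and, more delicately, that the leftover binomial's two terms reassemble into the stated products of $\out$- and $\into$-monomials of the derived subsets $\gm\cup\dl$, $\gm\cap\dl$, $\gm\sm(\gm\cap\dl)$, and $\dl\sm(\gm\cap\dl)$. This is a matter of tracking each edge of $G$ --- interior versus closure versus boundary, incoming versus outgoing --- relative to all of these subsets simultaneously, and doing so consistently across the three cases. It is routine but intricate, and requires no idea beyond the monomial/subset dictionary and the unordered-reduction trick.
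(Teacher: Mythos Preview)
Your proposal is correct and follows essentially the same approach as the paper: apply Proposition~\ref{separateinteriorclosure} to separate interior, closure, and boundary pieces, invoke the matching cases of Propositions~\ref{subsetoverlapinterior},~\ref{subsetoverlapclosure}, and~\ref{subsetoverlapboundary}, and then use Proposition~\ref{unorderedreduction} for the reduction to zero. The paper's proof is in fact just a two-sentence pointer to exactly these ingredients; your write-up supplies more of the bookkeeping detail but introduces no new ideas and omits nothing essential.
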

\begin{proof}
All three cases follow directly from applying the appropriate cases of Propositions~\ref{subsetoverlapinterior},~\ref{subsetoverlapclosure}, and~\ref{subsetoverlapboundary}. The reduction statements follow from Proposition~\ref{unorderedreduction}.
\end{proof}

Finally, we compute the S-polynomials among generators of the form $\nu g_\gm\ksup{k}$ from the original basis $\cG_0$ and show that they can all be reduced by generators in the working basis $\cG^\prime$. This will be the first argument in which the choice of monomial order comes into play. Observation~\ref{MOanddiv} will be used repeatedly.

\begin{lemma}
\label{subsetoverlap}
Let $\gm, \dl\subset G$. Then 
$$S(\nu g_\gm\ksup{k}, \nu g_\dl\ksup{k})\xrightarrow{\cG^\prime}0$$ in $\cE_{k+1}[\nu]$.
\end{lemma}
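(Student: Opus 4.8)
The plan is to reduce the statement about $S(\nu g_\gm\ksup{k},\nu g_\dl\ksup{k})$ to the already-established structural identities of Proposition~\ref{ksubsetoverlap} and then to perform a careful case analysis on the term orders using Observation~\ref{MOanddiv}. First I would invoke Proposition~\ref{gbprinciplecoeff}: since both inputs are $\nu$ times polynomials in $\cE_{k+1}$ (with no $\nu$ in $g_\gm\ksup{k}$ or $g_\dl\ksup{k}$), we have $S(\nu g_\gm\ksup{k},\nu g_\dl\ksup{k})=\nu\, S(g_\gm\ksup{k},g_\dl\ksup{k})$. Next, using Equation~\ref{ggmkvkplusone}, write $g_\gm\ksup{k}=\anyzeta_\gm g_\gm\ksup{k+1}$ and $g_\dl\ksup{k}=\anyzeta_\dl g_\dl\ksup{k+1}$, where each of $\anyzeta_\gm,\anyzeta_\dl$ is either $1$ or $\ztau\ksup{k+1}$. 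Applying Proposition~\ref{gbprinciplecoeff} again (in the case $\gcd(a,\lm{g})=1$ or the third case, depending on whether $\ztau\ksup{k+1}$ divides the relevant leading monomials), one reduces $S(g_\gm\ksup{k},g_\dl\ksup{k})$ to a monomial multiple of $S(g_\gm\ksup{k+1},g_\dl\ksup{k+1})$; the precise multiple depends on how $\ztau\ksup{k+1}$ interacts with the two generators, and tracking this is part of the bookkeeping. Thus the whole S-polynomial is a monomial (possibly involving $\nu$ and $\ztau\ksup{k+1}$) times $S(g_\gm\ksup{k+1},g_\dl\ksup{k+1})$, and Proposition~\ref{ksubsetoverlap} already gives an explicit factored form for the latter, together with a reduction to zero by $g_{\gm\sm(\gm\cap\dl)}\ksup{k+1}, g_{\dl\sm(\gm\cap\dl)}\ksup{k+1}$ or by $g_{\gm\cup\dl}\ksup{k+1},g_{\gm\cap\dl}\ksup{k+1}$, according to the in-led/out-led types of $\gm$ and $\dl$.

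The core of the argument is then to verify that the reduction provided by Proposition~\ref{ksubsetoverlap} can actually be carried out inside the working basis $\cG^\prime$ of Equation~\ref{workingbasis}, rather than merely inside the ideal. The subtlety is that $\cG^\prime$ contains the multiples $\ztau\ksup{k+1}g_\Lambda\ksup{k+1}$ and the tilde generators $\widetilde{g}_\Lambda$, but not the bare $g_\Lambda\ksup{k+1}$ themselves. So I would organize the proof around cases indexed by which of $\anyzeta_\gm$, $\anyzeta_\dl$ equal $\ztau\ksup{k+1}$, and within each by whether $\ztau\ksup{k+1}$ divides the leading monomial, the trailing monomial, both, or neither of the relevant $g_\Lambda\ksup{k+1}$ for $\Lambda\in\{\gm\sm(\gm\cap\dl),\dl\sm(\gm\cap\dl),\gm\cup\dl,\gm\cap\dl\}$. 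In each case, the monomial multiplying $S(g_\gm\ksup{k+1},g_\dl\ksup{k+1})$ supplies either a factor of $\nu$ — allowing reduction by $\nu g_\Lambda\ksup{k}\in\cG_0$ — or a factor of $\ztau\ksup{k+1}$ — allowing reduction by $\ztau\ksup{k+1}g_\Lambda\ksup{k+1}\in\cG^\prime$ — or, in the worst cases, one must route through a tilde generator, using Observation~\ref{MOanddiv} to confirm that the term being cancelled really is the leading term so that division-algorithm step is legal. The example section (especially the analyses of $S(\nu g_\gm\ksup{1},\nu g_\dl\ksup{1})$ via Case~3 and $S(\nu g_\dl\ksup{1},\nu g_{\gm\cup\dl}\ksup{1})$ via Case~2(b)) is exactly the template: Cases~1, 2(a), 2(b), 3 of the proof are presumably the subdivision by which of $\gm\pm\dl$-type subsets contains the edge $\ztau\ksup{k+1}$ as an internal edge.

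The main obstacle I anticipate is the term-order analysis, not the algebraic identities. Proposition~\ref{ksubsetoverlap} deliberately leaves the term orders of its outputs "undetermined in general," so to legitimately apply the division algorithm one must, in each configuration, pin down whether the leading term of $S(g_\gm\ksup{k+1},g_\dl\ksup{k+1})$ (and of the intermediate polynomials produced during reduction) is divisible by $\ztau\ksup{k+1}$, and hence — by Observation~\ref{MOanddiv} — which monomial leads. When $\ztau\ksup{k+1}$ divides exactly one term, Observation~\ref{MOanddiv} settles it immediately; when it divides both terms or neither, one falls back on the lexicographic order on the remaining $x_i$ and must argue, using the planarity/braid structure of $G$ and which edges are internal to which subsets, that the relevant cancellations still line up with leading terms. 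I would handle the $\ztau\ksup{k+1}$-divides-both and $\ztau\ksup{k+1}$-divides-neither subcases by appealing to Proposition~\ref{unorderedreduction}, which is tailor-made to reduce two-term differences of unknown order by their "constituent parts" — this is why the reduction targets in Proposition~\ref{ksubsetoverlap} come in the paired form $g_{\gm\sm(\gm\cap\dl)}\ksup{k+1}, g_{\dl\sm(\gm\cap\dl)}\ksup{k+1}$. So the proof structure is: (1) strip the $\nu$ and the $\anyzeta$ factors via Proposition~\ref{gbprinciplecoeff}; (2) substitute the factored form from Proposition~\ref{ksubsetoverlap}; (3) case on the divisibility of leading/trailing monomials by $\ztau\ksup{k+1}$, using Observation~\ref{MOanddiv} and Proposition~\ref{unorderedreduction} to justify reductions by the appropriate members of $\cG^\prime$ ($\nu g_\Lambda\ksup{k}$, $\ztau\ksup{k+1}g_\Lambda\ksup{k+1}$, $\widetilde{g}_\Lambda$, or $\nutopk$); (4) conclude the remainder is zero in every case.
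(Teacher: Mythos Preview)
Your proposal is correct and follows essentially the same approach as the paper: strip $\nu$ via Proposition~\ref{gbprinciplecoeff}, rewrite $g_\gm\ksup{k}=\anyzeta_\gm g_\gm\ksup{k+1}$, and case on the values of $\anyzeta_\gm,\anyzeta_\dl$ (your Cases~1, 2(a), 2(b), 3 match the paper's exactly), using Proposition~\ref{ksubsetoverlap} for the structural identities and then routing reductions through $\ztau\ksup{k+1}g_\Lambda\ksup{k+1}$, $\nu g_\Lambda\ksup{k}$, tilde generators, and $\nutopk$ as available in $\cG^\prime$.

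One small correction to your anticipated obstacle: you will \emph{not} need the planarity/braid structure of $G$ or the residual lexicographic order on the $x_i$ to resolve the undetermined term orders. In Case~2(b) the term order is in fact determined by $\ztau\ksup{k+1}$ via Observation~\ref{MOanddiv}; in Case~3, where neither term of the refactored S-polynomial is divisible by $\nu$ or $\ztau\ksup{k+1}$, the paper simply argues that the expression is reducible by $\widetilde{g}_\gm$ if one term leads and by $\widetilde{g}_\dl$ if the other does, then carries out both reductions in sequence and verifies by direct refactoring that the remainder is identically zero. So the ``worst case'' is handled by a symmetric either/or argument rather than by any deeper combinatorial input.
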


\begin{proof} For any in-led / out-led combination of $\gm$ and $\dl$, the first step to compute $S(\nu g_\gm\ksup{k}, \nu g_\dl\ksup{k})$ is to rewrite $g_\gm\ksup{k}=\anyzeta_\gm g_\gm\ksup{k+1}$ and $g_\dl=\anyzeta_\dl g_\dl\ksup{k+1}$ so that we may apply the results of Proposition~\ref{ksubsetoverlap}. The extra factors of $\anyzeta_\gm$ and $\anyzeta_\dl $ are either $\ztau\ksup{k+1}$ or 1, depending on whether $\ztau\ksup{k+1}$ is internal to $\gm$ and/or $\dl$. 

Consider  $S(\nu g_\gm\ksup{k}, \nu g_\dl\ksup{k})=\nu S(\anyzeta_\gm g_\gm\ksup{k+1}, \anyzeta_\dl g_\dl\ksup{k+1})$, where we have used Proposition~\ref{gbprinciplecoeff} to move the factor of $\nu$. The possible values of $\anyzeta_\gm$ and $\anyzeta_\dl$, along with the rules in Proposition~\ref{gbprinciplecoeff} give us the following cases.
\begin{enumerate}
\item $\anyzeta_\gm=\anyzeta_\dl=\ztau\ksup{k+1}$; that is, $\ztau\ksup{k+1}$ is internal to both $\gm$ and $\dl$. Then Statement (1) of Proposition~\ref{gbprinciplecoeff} implies $$\nu S(\anyzeta_\gm g_\gm\ksup{k+1}, \anyzeta_\dl g_\dl\ksup{k+1})=\nu\ztau\ksup{k+1} S(g_\gm\ksup{k+1}, g_\dl\ksup{k+1}).$$
\item $\anyzeta_\gm=\ztau\ksup{k+1}$, $\anyzeta_\dl=1$; that is, $\ztau\ksup{k+1}$ is internal to exactly one of $\gm$ and $\dl$, which we take to be $\gm$ without loss of generality. In this case, $\ztau\ksup{k+1}$ divides at most one term of $g_\dl\ksup{k}$.
\begin{enumerate}
\item $\ztau\ksup{k+1}$ divides neither term of $g_\dl\ksup{k}$. Then Statement (2) of Proposition~\ref{gbprinciplecoeff} implies $$\nu S(\anyzeta_\gm g_\gm\ksup{k+1}, \anyzeta_\dl g_\dl\ksup{k+1})=\nu \ztau\ksup{k+1}S(g_\gm\ksup{k+1}, g_\dl\ksup{k+1}).$$
\item $\ztau\ksup{k+1}$ divides exactly one term of $g_\dl\ksup{k}$. Observation~\ref{MOanddiv} implies that it divides the leading term, so Statement (3) of Proposition~\ref{gbprinciplecoeff} implies $$\nu S(\anyzeta_\gm g_\gm\ksup{k+1}, \anyzeta_\dl g_\dl\ksup{k+1})=\nu S(g_\gm\ksup{k+1}, g_\dl\ksup{k+1}).$$
\end{enumerate}
\item $\anyzeta_\gm=\anyzeta_\dl=1$; that is, $\ztau\ksup{k+1}$ is internal to neither $\gm$ nor $\dl$. Then $$\nu S(\anyzeta_\gm g_\gm\ksup{k+1}, \anyzeta_\dl g_\dl\ksup{k+1})=\nu S(g_\gm\ksup{k+1}, g_\dl\ksup{k+1}).$$
\end{enumerate}

Explicit expressions for $S(\nu g_\gm\ksup{k}, \nu g_\dl\ksup{k})$ in $\cE_{k+1}[\nu]$ are then multiples (by $\nu$ and possibly by $\ztau\ksup{k+1}$) of the expressions for $S(g_\gm\ksup{k+1}, g_\dl\ksup{k+1})$ in Proposition~\ref{ksubsetoverlap}. In Cases 1 and 2(a), where there is a factor of $\ztau\ksup{k+1}$ in front of $S(g_\gm\ksup{k+1}, g_\dl\ksup{k+1})$, we may reduce by some combination of $\ztau\ksup{k+1}g_\Lambda\ksup{k+1}$ for $\Lambda\in\{\gm\sm(\gm\cap\dl), \dl\sm(\gm\cap\dl), \gm\cap\dl, \gm\cup\dl\}$, exactly paralleling the reductions in Proposition~\ref{ksubsetoverlap}. Generators of the form $\ztau\ksup{k+1} g_\Lambda\ksup{k+1}$ are in the working basis for any $\Lambda$.

Cases 2(b) and 3 are more delicate. Since we do not have the factor of $\ztau\ksup{k+1}$ in front of $\nu S(g_\gm\ksup{k+1}, g_\dl\ksup{k+1})$, we cannot necessarily reduce by generators of the form $\ztau\ksup{k+1}g_\Lambda\ksup{k+1}$. We may always reduce by generators of the form $\nu g_\Lambda\ksup{k+1}$ as in Proposition~\ref{ksubsetoverlap}, but these are in the working basis $\cG^\prime$ only if $g_\Lambda\ksup{k}=g_\Lambda\ksup{k+1}$ (i.e.~if $\anyzeta_\Lambda=1$), which is not always true. We suppose now that $g_\Lambda\ksup{k}\neq g_\Lambda\ksup{k+1}$ for at least one of $\Lambda\in\{\gm\sm(\gm\cap\dl), \dl\sm(\gm\cap\dl), \gm\cap\dl, \gm\cup\dl\}$.
\smallskip

\noindent\emph{Case 2(b):} $\anyzeta_\gm=\ztau\ksup{k+1}$, $\anyzeta_\dl=1$, $\ztau\ksup{k+1}\,\vert\,\lt{g_\dl\ksup{k}}$, and $g_\Lambda\ksup{k}\neq g_\Lambda\ksup{k+1}$ for at least one of $\Lambda\in\{\gm\sm(\gm\cap\dl), \dl\sm(\gm\cap\dl), \gm\cap\dl, \gm\cup\dl\}$. \smallskip 

We have assumed that $\ztau\ksup{k+1}$ is internal to $\gm$ but not $\dl$, which means that it cannot be internal to $\gm\cap\dl$ or $\dl\sm(\gm\cap\dl)$.  We have also assumed that $\ztau\ksup{k+1}$ divides one term of $g_\dl\ksup{k}$, which means that $\ztau\ksup{k+1}$ must go either into or out of $\dl$. Therefore, $\ztau\ksup{k+1}$ cannot be internal to $\gm\sm(\gm\cap\dl)$ either. So $g_\Lambda\ksup{k}=g_\Lambda\ksup{k+1}$ for $\Lambda\in\left\{\gm\cap\dl,\dl\sm(\gm\cap\dl), \gm\sm(\gm\cap\dl)\right\}$. 

The only scenario compatible with our assumptions, then, is that $\ztau\ksup{k+1}$ is internal to $\gm\cup\dl$ and so $g_{\gm\cup\dl}\ksup{k}\neq g_{\gm\cup\dl}\ksup{k+1}$. We then must find an alternative method of reducing $\nu S(g_\gm\ksup{k+1}, g_\dl\ksup{k+1})$ when one subset is in-led and the other out-led (Case 3 of Proposition~\ref{ksubsetoverlap}). 

Suppose first that $\gm$ is in-led and $\dl$ is out-led. Then $\ztau\ksup{k+1}$ must go out of $\gm\cap\dl$ and into $\gm\sm(\gm\cap\dl)$. Proposition~\ref{ksubsetoverlap}(3) gives the following expression for the S-polynomial we are trying to reduce.
\begin{align*}
\nu S(g_\gm\ksup{k+1},g_\dl\ksup{k+1})&=\,\nu \xsub{\gm\sm(\gm\cap\dl)}{\dl\sm(\gm\cap\dl)}\zsub{\gm\sm(\gm\cap\dl)}{\dl\sm(\gm\cap\dl)}\ksup{k+1}\\
&\cdot\left(g_{\gm\cup\dl}^{(k+1),\out} g_{\gm\cap\dl}^{(k+1),\out}-g_{\gm\cup\dl}^{(k+1),\into} g_{\gm\cap\dl}^{(k+1),\into}\right)
\end{align*}
\noi The term order shown is correct; it is determined by the fact that $\ztau\ksup{k+1}$ divides only one of the terms. Since $\ztau\ksup{k+1}$ is internal to $\gm\cup\dl$, it divides neither term of $g_{\gm\cup\dl}\ksup{k+1}$. Since it goes out of but not into $\gm\cap\dl$, it divides $g_{\gm\cap\dl}^{(k+1),\out}$ but not $g_{\gm\cap\dl}^{(k+1),\into}$. Since both $\nu$ and $\ztau\ksup{k+1}$ divide the leading term of this expression, we reduce first by $\nutopk$.
\begin{align*}
\nu S(g_\gm\ksup{k+1},g_\dl\ksup{k+1})\xrightarrow{\nutopk}\, &\xsub{\gm\sm(\gm\cap\dl)}{\dl\sm(\gm\cap\dl)}\zsub{\gm\sm(\gm\cap\dl)}{\dl\sm(\gm\cap\dl)}\ksup{k+1}\\
\cdot\left(\right. & \nu g_{\gm\cup\dl}^{(k+1),\into} g_{\gm\cap\dl}^{(k+1),\into} - g_{\gm\cup\dl}^{(k+1),\out} g_{\gm\cap\dl}^{(k+1),\out}\left.\right)
\end{align*}
The leading term in the result is determined by $\nu$. Now since $\ztau\ksup{k+1}$ goes out of but not into $\gm\cap\dl$, we have available in the working basis the $\gm\cap\dl$ tilde generator, which must have the term order
$$\widetilde{g}_{\gm\cap\dl}=\nu g_{\gm\cap\dl}^{(k+1),\into} - g_{\gm\cap\dl}^{(k+1),\out}.$$
This term order is compatible with the term order of our reduced expression for $\nu S(g_\gm\ksup{k+1},g_\dl\ksup{k+1})$, so we reduce further as follows.
\begin{align*}
\nu S(g_\gm\ksup{k+1},g_\dl\ksup{k+1})&\xrightarrow{\nutopk,\,\widetilde{g}_{\gm\cap\dl}}\\
&\quad\xsub{\gm\sm(\gm\cap\dl)}{\dl\sm(\gm\cap\dl)}\zsub{\gm\sm(\gm\cap\dl)}{\dl\sm(\gm\cap\dl)}\ksup{k+1}g_{\gm\cap\dl}^{(k+1),\out}g_{\gm\cup\dl}\ksup{k+1}.
\end{align*}
Since $g_{\gm\cup\dl}\ksup{k+1}$ is the only factor in this expression with more than one term, its term order determines the term order of the expression. Since $\ztau\ksup{k+1}$ divides $g_{\gm\cap\dl}^{(k+1),\out}$, we may reduce to zero using $\ztau\ksup{k+1}g_{\gm\cup\dl}\ksup{k+1}$, which is in the working basis.

The other possibility in Case 2(b) was that $\gm$ was out-led and $\dl$ in-led. This means that $\ztau\ksup{k+1}$ goes out of $\gm\sm(\gm\cap\dl)$ and into $\gm\cap\dl$. Our expression for $\nu S(g_\gm\ksup{k+1},g_\dl\ksup{k+1})$ comes from Part (3) of Proposition~\ref{ksubsetoverlap} again, but with the roles of $\gm$ and $\dl$ reversed. The argument for reducing by $\nutopk$, then $\widetilde{g}_{\gm\cap\dl}$, then $\ztau\ksup{k+1}g_{\gm\cup\dl}\ksup{k+1}$ is very similar to the argument just given, so we omit the details here. 

\smallskip

\noindent\emph{Case 3:} $\anyzeta_\gm=\anyzeta_\dl=1$ and $g_\Lambda\ksup{k}\neq g_\Lambda\ksup{k+1}$ for at least one of $\Lambda\in\{\gm\sm(\gm\cap\dl), \dl\sm(\gm\cap\dl), \gm\cap\dl, \gm\cup\dl\}$. \smallskip 

Our assumptions about $\anyzeta_\gm$ and $\anyzeta_\dl$ mean that $\ztau\ksup{k+1}$ is not internal to $\gm$ or $\dl$, hence not to $\gm\sm(\gm\cap\dl)$, $\dl\sm(\gm\cap\dl)$, or $\gm\cap\dl$. Therefore, we assume that it is internal to $\gm\cup\dl$, so that $g_{\gm\cup\dl}\ksup{k}\neq g_{\gm\cup\dl}\ksup{k+1}$. Then $\ztau\ksup{k+1}$ must go between $\gm\sm(\gm\cap\dl)$ and $\dl\sm(\gm\cap\dl)$ in one direction or the other. We will assume that it goes out of $\dl\sm(\gm\cap\dl)$ and into $\gm\sm(\gm\cap\dl)$. The other case is analogous, with the roles of $\gm$ and $\dl$ reversed throughout the argument.

We know, then, that $\dl$ is out-led, $\gm$ is in-led, and $\ztau\ksup{k+1}$ divides exactly one term of $g_\gm\ksup{k+1}$ and exactly one term of $g_\dl\ksup{k+1}$. Therefore, we have available in the working basis
$$\widetilde{g}_\gm = \nu g_\gm^{(k+1),\out} - g_\gm^{(k+1),\into} \quad\text{and}\quad\widetilde{g}_\dl = \nu g_\dl^{(k+1),\into} - g_\dl^{(k+1),\out}.$$ 
We will use these to reduce $\nu S(g_\gm\ksup{k+1},g_\dl\ksup{k+1})$ to zero. Begin with a refactored version of Statement (3) in Proposition~\ref{ksubsetoverlap} (e.g.~refer to the first line of Case \ref{subsetinteriormix} in Proposition~\ref{subsetoverlapinterior}). 
\begin{align*}
\nu S(g_\gm\ksup{k+1},\, &g_\dl\ksup{k+1})=\\
%
&\nu \xsub{\dl\sm(\gm\cap\dl)}{G\sm(\gm\cup\dl)}\xsub{\gm\cap\dl}{G\sm\dl}\zsub{\dl\sm(\gm\cap\dl)}{G\sm(\gm\cup\dl)}\zsub{\gm\cap\dl}{G\sm\dl}\zsub{\dl}{\tau}\ksup{k+1}
  g_\gm^{(k+1),\out}\\
\,- &\nu\xsub{G\sm(\gm\cup\dl)}{\gm\sm(\gm\cap\dl)}\xsub{G\sm\gm}{\gm\cap\dl}\zsub{G\sm(\gm\cup\dl)}{\gm\sm(\gm\cap\dl)}\zsub{G\sm\gm}{\gm\cap\dl}\zsub{\beta}{\gm}\ksup{k+1}
 g_{\dl}^{(k+1),\into}
\end{align*}
The term order of this expression is unknown since $\nu$ divides both terms and $\ztau\ksup{k+1}$ divides neither. It is reducible by $\widetilde{g}_\gm$ if the term order shown is correct and by $\widetilde{g}_\dl$ if not. The argument is similar either way, so we suppose now that the term order shown is correct and omit the other case.  Reducing by $\widetilde{g}_\gm$ produces the following, in which the term order is determined by $\nu$ and shown correctly.
\begin{align*}
\nu S(g_\gm\ksup{k+1},\, &g_\dl\ksup{k+1})\xrightarrow{\widetilde{g}_\gm}\\
&\nu\xsub{G\sm(\gm\cup\dl)}{\gm\sm(\gm\cap\dl)}\xsub{G\sm\gm}{\gm\cap\dl}\zsub{G\sm(\gm\cup\dl)}{\gm\sm(\gm\cap\dl)}\zsub{G\sm\gm}{\gm\cap\dl}\zsub{\beta}{\gm}\ksup{k+1}
 g_{\dl}^{(k+1),\into}\\
%
\,- & \xsub{\dl\sm(\gm\cap\dl)}{G\sm(\gm\cup\dl)}\xsub{\gm\cap\dl}{G\sm\dl}\zsub{\dl\sm(\gm\cap\dl)}{G\sm(\gm\cup\dl)}\zsub{\gm\cap\dl}{G\sm\dl}\zsub{\dl}{\tau}\ksup{k+1}
  g_\gm^{(k+1),\into}
\end{align*}
This expression can be reduced by $\widetilde{g}_\dl$, leaving the following, in which the term order is unknown.
\begin{align*}
\nu S(g_\gm\ksup{k+1},\, &g_\dl\ksup{k+1})\xrightarrow{\widetilde{g}_\gm,\widetilde{g}_\dl}\\
& \xsub{\dl\sm(\gm\cap\dl)}{G\sm(\gm\cup\dl)}\xsub{\gm\cap\dl}{G\sm\dl}\zsub{\dl\sm(\gm\cap\dl)}{G\sm(\gm\cup\dl)}\zsub{\gm\cap\dl}{G\sm\dl}\zsub{\dl}{\tau}\ksup{k+1}
  g_\gm^{(k+1),\into}\\
\,- &\xsub{G\sm(\gm\cup\dl)}{\gm\sm(\gm\cap\dl)}\xsub{G\sm\gm}{\gm\cap\dl}\zsub{G\sm(\gm\cup\dl)}{\gm\sm(\gm\cap\dl)}\zsub{G\sm\gm}{\gm\cap\dl}\zsub{\beta}{\gm}\ksup{k+1}
 g_{\dl}^{(k+1),\out}
\end{align*}
This expression is actually zero, which we may see by refactoring the term written first above.
\begin{align*}
&\xsub{\dl\sm(\gm\cap\dl)}{G\sm(\gm\cup\dl)}\xsub{\gm\cap\dl}{G\sm\dl}\zsub{\dl\sm(\gm\cap\dl)}{G\sm(\gm\cup\dl)}\zsub{\gm\cap\dl}{G\sm\dl}\zsub{\dl}{\tau}\ksup{k+1}  g_\gm^{(k+1),\into}\\
=&\xsub{\dl\sm(\gm\cap\dl)}{G\sm(\gm\cup\dl)}\xsub{\gm\cap\dl}{G\sm\dl}\xsub{G\sm\gm}{\gm}\\
\cdot &\zsub{\dl\sm(\gm\cap\dl)}{G\sm(\gm\cup\dl)}\zsub{\gm\cap\dl}{G\sm\dl}\zsub{G\sm\gm}{\gm}\zsub{\dl}{\tau}\ksup{k+1}\zsub{\beta}{\gm}\ksup{k+1}\\
=&\xsub{\dl\sm(\gm\cap\dl)}{G\sm(\gm\cup\dl)}\xsub{\gm\cap\dl}{G\sm\dl}\xsub{G\sm(\gm\cup\dl)}{\gm}\xsub{\dl\sm(\gm\cap\dl)}{\gm\sm(\gm\cap\dl)}\xsub{\dl\sm(\gm\cap\dl)}{\gm\cap\dl}\\
\cdot &\zsub{\dl\sm(\gm\cap\dl)}{G\sm(\gm\cup\dl)}\zsub{\gm\cap\dl}{G\sm\dl}\zsub{G\sm(\gm\cup\dl)}{\gm}\zsub{\dl\sm(\gm\cap\dl)}{\gm\sm(\gm\cap\dl)}\zsub{\dl\sm(\gm\cap\dl)}{\gm\cap\dl}\\
\cdot &\zsub{\dl}{\tau}\ksup{k+1}\zsub{\beta}{\gm}\ksup{k+1}\\
=&\xsub{\dl}{G\sm\dl)}\xsub{G\sm(\gm\cup\dl)}{\gm}\xsub{\dl\sm(\gm\cap\dl)}{\gm\cap\dl}\\
\cdot &\zsub{\dl}{G\sm\dl}\zsub{G\sm(\gm\cup\dl)}{\gm}\zsub{\dl\sm(\gm\cap\dl)}{\gm\cap\dl}
\cdot \zsub{\dl}{\tau}\ksup{k+1}\zsub{\beta}{\gm}\ksup{k+1}\\
=&g_\dl^{(k+1),\out}\xsub{G\sm(\gm\cup\dl)}{\gm}\xsub{\dl\sm(\gm\cap\dl)}{\gm\cap\dl}
\cdot \zsub{G\sm(\gm\cup\dl)}{\gm}\zsub{\dl\sm(\gm\cap\dl)}{\gm\cap\dl}
\cdot \zsub{\beta}{\gm}\ksup{k+1}
\end{align*}
One more refactoring shows that this last expression is the same as the second term in our reduced expression for $\nu S(g_\gm\ksup{k+1},\, g_\dl\ksup{k+1})$ above. So we have shown that 
$$\nu S(g_\gm\ksup{k+1},\, g_\dl\ksup{k+1})\xrightarrow{\widetilde{g}_\gm,\widetilde{g}_\dl}0.$$
\end{proof}

We have now computed S-polynomials among generators of the form $\nu g_\gm\ksup{k}$ for all combinations of in-led and out-led subsets, and seen that they all reduce to zero by elements of the working basis $\cG^\prime$. Therefore, we close this section with the same working basis as at the end of the previous section.

\section{Implementing Buchberger's Algorithm: Round 2}
\label{bbrd2}
Buchberger's Algorithm now calls for a new round of S-polynomials: those involving elements of $\cG^\prime\setminus\cG_0$. As we will see, these all reduce to zero within $\cG^\prime$. So, at the close of this section, we will have confirmed that the S-polynomial of any pair of generators in $\cG^\prime$ reduces to zero within $\cG^\prime$. This will complete the proof of Lemma~\ref{buchend}. Table~\ref{round2table} records the computations undertaken in this section.

\begin{table}[htdp]
\begin{center}
\renewcommand\arraystretch{1.5}
\begin{tabular}{|c|c|c|c|}
\hline
$S\!\left(-,-\right)$ & Result & Prop. & Add to $\cG^\prime$? \\\hline\hline
$S\!\left(\nutopk,\ztau\ksup{k+1} g_\gm\ksup{k+1}\right)$ & 0 & Stmt.~(\ref{nutop0nu}) of Prop.~\ref{nutopone} & no \\\hline
$S\!\left(\ztau\ksup{k+1} g_\gm\ksup{k+1}, \ztau\ksup{k+1} g_\dl\ksup{k+1}\right)$ & 0 & Prop.~\ref{gbprinciplecoeff} \& Prop.~\ref{ksubsetoverlap} & no \\\hline
$S\!\left(\ztau\ksup{k+1} g_\gm\ksup{k+1}, \widetilde{g}_\dl\right)$ & 0 & Lemma~\ref{ztautilde} & no \\\hline
$S\!\left( \widetilde{g}_\gm, \widetilde{g}_\dl\right)$ & 0 & Lemma~\ref{doubletilde} & no \\\hline
$S\!\left(\nu g_\gm\ksup{k},\ztau\ksup{k+1} g_\dl\ksup{k+1}\right)$ & 0 & Lemma~\ref{2nunonu} & no \\\hline
$S\!\left(\nu g_\gm\ksup{k}, \widetilde{g}_\dl\right)$ & 0 & Lemma~\ref{tildeprop} & no \\\hline
\end{tabular}
\end{center}
\medskip
\caption{S-polynomials Round 2: All computations are assumed to be among generators in $\cG^\prime$ and are carried out in $\cE_{k+1}[\nu]$. S-polynomials are listed in the order they are computed in Section~\ref{bbrd2}.}
\label{round2table}
\end{table}

First, we may quickly take care of $S(\nutopk,\ztau\ksup{k+1}g_\gm\ksup{k+1})$ using Statement~(\ref{nutop0nu}) of Proposition~\ref{nutopone}. Assuming that $\lc{g_\gm\ksup{k+1}}=1,$
\begin{align*}
S(\nutopk,\ztau\ksup{k+1}g_\gm\ksup{k+1})&=\,-\nu\ztau\ksup{k+1}\trt{g_\gm\ksup{k+1}}-\ztau\ksup{k+1}\lt{g_\gm\ksup{k+1}}\\
\text{reduce}\quad&+\,\,\trt{g_\gm\ksup{k+1}}\left(\nutopk\right)\\
&=-\ztau\ksup{k+1}g_\gm\ksup{k+1}
\end{align*}
Therefore, $S(\nutopk,\ztau\ksup{k+1}g_\gm\ksup{k+1})\xrightarrow{\nutopk, \ztau\ksup{k+1}g_\gm\ksup{k+1}}0.$

Second, we apply Proposition~\ref{gbprinciplecoeff} and Proposition~\ref{ksubsetoverlap} to see that 
\[S(\ztau\ksup{k+1}g_\gm\ksup{k+1},\ztau\ksup{k+1}g_\dl\ksup{k+1})=\ztau\ksup{k+1}S(g_\gm\ksup{k+1},g_\dl\ksup{k+1}),\]
which reduces to zero either by the pair $\ztau\ksup{k+1}g_{\gm\sm(\gm\cap\dl)}\ksup{k+1}$ and $\ztau\ksup{k+1}g_{\dl\sm(\gm\cap\dl)}\ksup{k+1}$,
or by the pair
$\ztau\ksup{k+1}g_{\gm\cup\dl}\ksup{k+1}$ and $\ztau\ksup{k+1}g_{\gm\cap\dl}\ksup{k+1}$.
In either case, these are elements of the working basis $\cG^\prime$.

A similar method works for $S(\ztau\ksup{k+1} g_\gm\ksup{k+1},\widetilde{g}_\dl)$, but in this case the term order of the S-polynomial's result will be determined by the presence of $\nu$ on one term but not the other. This means that we cannot use Proposition~\ref{unorderedreduction} to reduce these expressions to zero (as we did in Proposition~\ref{ksubsetoverlap}).

 \begin{lemma}
 \label{ztautilde}
 Let $\gm, \dl\subset G$. Suppose that $\ztau\ksup{k+1}$ divides the leading term but not the trailing term of $g_\dl\ksup{k}$. Then 
 \begin{align*}
 S(\ztau\ksup{k+1} g_\gm\ksup{k+1},\widetilde{g}_\dl)&\xrightarrow{\nutopk,\ztau\ksup{k+1}g_{\gm\sm(\gm\cap\dl)}\ksup{k+1},\ztau\ksup{k+1}g_{\dl\sm(\gm\cap\dl)}\ksup{k+1}}0\\
\quad\text{or}\quad
&\xrightarrow{\nutopk, \ztau\ksup{k+1}g_{\gm\cup\dl}\ksup{k+1},\ztau\ksup{k+1}g_{\gm\cap\dl}\ksup{k+1}}0.
\end{align*}
\end{lemma}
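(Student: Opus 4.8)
The plan is to run exactly the two reductions named in the statement: a single reduction by $\nutopk$ to clear the dummy variable $\nu$, followed by reductions by generators of the form $\ztau\ksup{k+1}g_\Lambda\ksup{k+1}$, all of which lie in $\cG^\prime$ by Equation~\ref{workingbasis}.

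I first record the shape of $\widetilde{g}_\dl$. Since $\ztau\ksup{k+1}$ divides exactly one term of $g_\dl\ksup{k}$, the closure edge $\ztau\ksup{k+1}$ is not internal to $\dl$ in $G\ksup{k+1}$, so $\anyzeta_\dl=1$ and $g_\dl\ksup{k}=g_\dl\ksup{k+1}$ by Equation~\ref{ggmkvkplusone}. Hence $\ztau\ksup{k+1}$ divides $\lm{g_\dl\ksup{k+1}}$ but not $\trm{g_\dl\ksup{k+1}}$, and $\widetilde{g}_\dl=\nu\,\trm{g_\dl\ksup{k+1}}-\lm{g_\dl\ksup{k+1}}$, whose leading monomial in $\cE_{k+1}[\nu]$ is $\nu\,\trm{g_\dl\ksup{k+1}}$ by Observation~\ref{MOanddiv}. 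If $\gcd\!\left(\lm{g_\gm\ksup{k+1}},\trm{g_\dl\ksup{k+1}}\right)=1$ then $\lm{\ztau\ksup{k+1}g_\gm\ksup{k+1}}$ and $\lm{\widetilde{g}_\dl}$ are relatively prime and Proposition~\ref{gbprinciplegcd} already reduces the S-polynomial to zero via the two input generators, so assume otherwise.

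Now expand the S-polynomial. Because $\nu\nmid\ztau\ksup{k+1}g_\gm\ksup{k+1}$ and $\ztau\ksup{k+1}\nmid\trm{g_\dl\ksup{k+1}}$, the least common multiple of the two leading monomials factors as $\nu\,\ztau\ksup{k+1}\,\lcm{\lm{g_\gm\ksup{k+1}},\trm{g_\dl\ksup{k+1}}}$, and a direct computation (all leading coefficients being $1$ in the blackboard case) shows that both terms of $S\!\left(\ztau\ksup{k+1}g_\gm\ksup{k+1},\widetilde{g}_\dl\right)$ are divisible by $\ztau\ksup{k+1}$, with the $\nu$-bearing term---which leads, by Definition~\ref{edgeringmo}---divisible by $\nu\,\ztau\ksup{k+1}$. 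A single reduction by $\nutopk$ therefore gives
\[
S\!\left(\ztau\ksup{k+1}g_\gm\ksup{k+1},\widetilde{g}_\dl\right)\ \xrightarrow{\ \nutopk\ }\ \pm\,\frac{\ztau\ksup{k+1}}{\gcd\!\left(\lm{g_\gm\ksup{k+1}},\trm{g_\dl\ksup{k+1}}\right)}\Bigl(\lm{g_\gm\ksup{k+1}}\,\lm{g_\dl\ksup{k+1}}-\trm{g_\gm\ksup{k+1}}\,\trm{g_\dl\ksup{k+1}}\Bigr),
\]
and the right-hand side is $\pm\ztau\ksup{k+1}$ times the S-polynomial of $g_\gm\ksup{k+1}$ and $g_\dl\ksup{k+1}$ formed by pairing $\lm{g_\gm\ksup{k+1}}$ against $\trm{g_\dl\ksup{k+1}}$ rather than against $\lm{g_\dl\ksup{k+1}}$---that is, the S-polynomial of $g_\gm\ksup{k+1}$ with the generator of $\dl$ written in the order \emph{opposite} to its natural one. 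Propositions~\ref{subsetoverlapinterior},~\ref{subsetoverlapclosure} and~\ref{subsetoverlapboundary}, together with Proposition~\ref{separateinteriorclosure}, evaluate exactly such an S-polynomial for any written orders of the inputs and express it as a monomial times a difference of two products, each product carrying one factor from $g_{\Lambda_1}\ksup{k+1}$ and one from $g_{\Lambda_2}\ksup{k+1}$, where $\{\Lambda_1,\Lambda_2\}$ is either $\{\gm\sm(\gm\cap\dl),\dl\sm(\gm\cap\dl)\}$ or $\{\gm\cup\dl,\gm\cap\dl\}$; which of the two occurs is dictated by the in-led/out-led types of $\gm$ and $\dl$ exactly as in Proposition~\ref{ksubsetoverlap} (transposing the terms of $g_\dl\ksup{k+1}$ interchanges the two cases there). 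Finally, $\ztau\ksup{k+1}$ times such a binomial reduces to zero by $\ztau\ksup{k+1}g_{\Lambda_1}\ksup{k+1}$ and then $\ztau\ksup{k+1}g_{\Lambda_2}\ksup{k+1}$ by the argument of Proposition~\ref{unorderedreduction}, the explicit factor $\ztau\ksup{k+1}$ surviving the first reduction so as to be available for the second; this is precisely one of the two reduction paths in the statement. (If one of $\Lambda_1,\Lambda_2$ is empty, or the two coincide, the expression telescopes and the reduction uses fewer of these generators, but all of them still lie in $\cG^\prime$.)

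The main obstacle is term-order bookkeeping rather than anything conceptual: at each stage one must appeal to Observation~\ref{MOanddiv} to pin down the leading term---in particular to verify that it is $\nu\,\ztau\ksup{k+1}$, not merely $\nu$, that divides the leading term of the raw S-polynomial, so that $\nutopk$ applies---and one must run the case split on the types of $\gm$ and $\dl$, and, inside the closing Proposition~\ref{unorderedreduction}-style step, on which monomial of $g_{\Lambda_2}\ksup{k+1}$ dominates. This is routine but error-prone, and is best carried out in parallel with the corresponding argument for $S\!\left(\nu g_\gm\ksup{k},\nu g_\dl\ksup{k}\right)$ in the proof of Lemma~\ref{subsetoverlap}.
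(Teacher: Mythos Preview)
Your argument is correct and follows essentially the same route as the paper: factor out $\ztau\ksup{k+1}$ from the S-polynomial (the paper does this by citing Statement~(2) of Proposition~\ref{gbprinciplecoeff}, you do it by a direct lcm computation), reduce once by $\nutopk$ since the leading term carries both $\nu$ and $\ztau\ksup{k+1}$, then recognize the remaining expression as $\ztau\ksup{k+1}$ times a binomial that expands via Propositions~\ref{subsetoverlapinterior}--\ref{subsetoverlapboundary} into the form of Proposition~\ref{ksubsetoverlap}, and finish with Proposition~\ref{unorderedreduction}. One small remark: your preliminary case, where $\gcd\!\left(\lm{g_\gm\ksup{k+1}},\trm{g_\dl\ksup{k+1}}\right)=1$ and you invoke Proposition~\ref{gbprinciplegcd}, reduces the S-polynomial via $\ztau\ksup{k+1}g_\gm\ksup{k+1}$ and $\widetilde{g}_\dl$ themselves rather than via the generators named in the lemma; this is still a reduction within $\cG^\prime$, but if you want to match the statement literally you can simply drop that shortcut, since your main argument already covers that case as well.
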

\begin{proof}
Applying Statement (2) of Proposition~\ref{gbprinciplecoeff}, we have 
\begin{align*}
S(\ztau\ksup{k+1} g_\gm\ksup{k+1},\widetilde{g}_\dl)&=\ztau\ksup{k+1} S(g_\gm\ksup{k+1},\widetilde{g}_\dl)\\
&=\ztau\ksup{k+1}S(g_\gm\ksup{k+1},\nu\trt{g_\dl\ksup{k+1}}-\lt{g_\dl\ksup{k+1}})
\end{align*}
Propositions~\ref{subsetoverlapinterior},~\ref{subsetoverlapclosure}, and~\ref{subsetoverlapboundary} can then be used to compute the S-polynomial explicitly in terms of $g_\Lambda\ksup{k+1}$ for $\Lambda\in\left\{\gm\sm(\gm\cap\dl), \dl\sm(\gm\cap\dl), \gm\cup\dl, \gm\cap\dl \right\}$, just as in Proposition~\ref{ksubsetoverlap}. The leading term of the result will be divisible by both $\nu$ and $\ztau\ksup{k+1}$, so it will be reducible by $\nutopk$ (and term order will be determined by $\nu$). Let $d=\gcd\left(\lm{g_\gm\ksup{k+1}},\trm{g_\dl\ksup{k+1}}\right)$. Then assuming that all coefficients are $\pm 1$,
\begin{align*}
\ztau\ksup{k+1}S(g_\gm\ksup{k+1}&, \widetilde{g}_\dl)=\\
&\ztau\ksup{k+1}\!\left(\frac{\nu\trt{g_\dl\ksup{k+1}}}{d}\trt{g_\gm\ksup{k+1}}-
\frac{\lt{g_\gm\ksup{k+1}}}{d}\lt{g_\dl\ksup{k+1}}
\right)\\
\xrightarrow{\nutopk}&\ztau\ksup{k+1}\!\left(\frac{\lt{g_\gm\ksup{k+1}}}{d}\lt{g_\dl\ksup{k+1}}
-\frac{\trt{g_\dl\ksup{k+1}}}{d}\trt{g_\gm\ksup{k+1}}\right). 
\end{align*}
The expression inside the parentheses may be expanded in terms of $g_\Lambda\ksup{k+1}$ for $\Lambda\in\{\gm\sm(\gm\cap\dl),\dl\sm(\gm\cap\dl),\gm\cap\dl,\gm\cup\dl\}$ just as in Proposition~\ref{ksubsetoverlap}. The term order of the resulting expression will be unknown, but since we have $\ztau\ksup{k+1}$ in front of it, we are effectively in the situation of Cases 1 and 2(a) of Lemma~\ref{subsetoverlap}. Proposition~\ref{unorderedreduction} allows us to reduce by some combination of $\ztau\ksup{k+1} g_\Lambda\ksup{k+1}$. These generators are in the working basis $\cG^\prime$ for any $\Lambda$.
\end{proof}

Next, we consider S-polynomials between tilde generators. Note that we only consider tilde generators that occur in $\cG^\prime$, which justifies the assumptions about divisibility by $\ztau\ksup{k+1}$ in the statement below.

\begin{lemma}
\label{doubletilde}
Let $\gm, \dl\subset G$. Assume that $\ztau\ksup{k+1}$ divides $\lt{g_\gm\ksup{k}}$ and $\lt{g_\dl\ksup{k}}$ and $\ztau\ksup{k+1}$ does not divide $\trt{g_\gm\ksup{k}}$ or $\trt{g_\dl\ksup{k}}$. Then the reduction
$$S(\widetilde{g}_\gm,\widetilde{g}_\dl)\xrightarrow{\left\{\ztau\ksup{k+1}g_\Lambda\ksup{k+1}\right\}}0$$
holds in $\cE_{k+1}[\nu]$ for some combination of $\Lambda\in\{\gm\sm(\gm\cap\dl),\dl\sm(\gm\cap\dl),\gm\cap\dl,\gm\cup\dl\}$. All such generators are in the working basis $\cG^\prime$.
\end{lemma}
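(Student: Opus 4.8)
The plan is to compute $S(\widetilde{g}_\gm,\widetilde{g}_\dl)$ directly, observe that the two factors of $\nu$ cancel outright, and then recognize the result---up to a monomial multiple divisible by $\ztau\ksup{k+1}$---as one of the S-polynomial expressions already computed in Proposition~\ref{ksubsetoverlap}, so that it can be reduced using the generators $\ztau\ksup{k+1}g_\Lambda\ksup{k+1}\in\cG^\prime$.

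First I would record two consequences of the hypotheses. Since $\ztau\ksup{k+1}$ divides $\lt{g_\gm\ksup{k}}$ but not $\trt{g_\gm\ksup{k}}$ (and likewise for $\dl$), the variable $\ztau\ksup{k+1}$ divides exactly one term of $g_\gm\ksup{k}$; as in the discussion of $\pi_k$ in Section~\ref{mainresult}, this means the closure edge of the $(k+1)^{\text{st}}$ strand has exactly one endpoint in $\gm$, so $\ztau\ksup{k+1}$ is not internal to $\gm$ in $G\ksup{k+1}$, hence $\anyzeta_\gm=1$ and $g_\gm\ksup{k}=g_\gm\ksup{k+1}$ by Equation~\ref{ggmkvkplusone} (similarly $g_\dl\ksup{k}=g_\dl\ksup{k+1}$). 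Second, by Equation~\ref{tildegendfn} together with this identification, in every case $\widetilde{g}_\gm=\nu\,\trm{g_\gm\ksup{k+1}}-\lm{g_\gm\ksup{k+1}}$ (up to sign), so $\lm{\widetilde{g}_\gm}=\nu\,\trm{g_\gm\ksup{k+1}}$ carries exactly one factor of $\nu$. Forming the S-polynomial therefore cancels $\nu$ completely, and an elementary computation from Definition~\ref{spolydfn} gives
\[
S(\widetilde{g}_\gm,\widetilde{g}_\dl)=\frac{\trm{g_\gm\ksup{k+1}}}{d}\,\lm{g_\dl\ksup{k+1}}-\frac{\trm{g_\dl\ksup{k+1}}}{d}\,\lm{g_\gm\ksup{k+1}},\qquad d=\gcd\!\left(\trm{g_\gm\ksup{k+1}},\trm{g_\dl\ksup{k+1}}\right),
\]
a binomial in $\cE_{k+1}$ with no occurrence of $\nu$ (signs suppressed, as throughout the blackboard case). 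Since $\lm{g_\gm\ksup{k+1}}$ and $\lm{g_\dl\ksup{k+1}}$ are each divisible by $\ztau\ksup{k+1}$ while the trailing monomials and $d$ are not, each term of $S(\widetilde{g}_\gm,\widetilde{g}_\dl)$ is divisible by $\ztau\ksup{k+1}$ exactly once.

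The displayed binomial is precisely the S-polynomial one obtains by presenting each of $g_\gm\ksup{k+1}$ and $g_\dl\ksup{k+1}$ with its \emph{trailing} monomial leading. I would therefore run the computation of Proposition~\ref{ksubsetoverlap}---separating interior, closure, and boundary edge factors via Proposition~\ref{separateinteriorclosure} and invoking the appropriate statements of Propositions~\ref{subsetoverlapinterior},~\ref{subsetoverlapclosure}, and~\ref{subsetoverlapboundary}, which apply verbatim since they only assume the input is written with its leading term first---but now with every $\out$ and $\into$ superscript in those statements exchanged relative to the original $\gm,\dl$. Which statement is relevant is dictated by the position of the edge $\ztau\ksup{k+1}$: if it goes out of both $\gm$ and $\dl$ (equivalently, both are out-led) it must run from $\gm\cap\dl$ to $G\sm(\gm\cup\dl)$, so it divides the leading monomial factor $\xsub{\gm\cap\dl}{G\sm(\gm\cup\dl)}\zsub{\gm\cap\dl}{G\sm(\gm\cup\dl)}\ksup{k+1}\zsub{\gm\cap\dl}{\tau}\ksup{k+1}$ of statement~(2), with remaining binomial $g_{\dl\sm(\gm\cap\dl)}^{(k+1),\into}g_{\gm\sm(\gm\cap\dl)}^{(k+1),\out}-g_{\gm\sm(\gm\cap\dl)}^{(k+1),\into}g_{\dl\sm(\gm\cap\dl)}^{(k+1),\out}$; the case ``$\ztau\ksup{k+1}$ into both'' is symmetric (statement~(1), front factor $\zsub{G\sm(\gm\cup\dl)}{\gm\cap\dl}\ksup{k+1}$ containing $\ztau\ksup{k+1}$); and if $\ztau\ksup{k+1}$ goes out of one of $\gm,\dl$ and into the other, it runs between $\gm\sm(\gm\cap\dl)$ and $\dl\sm(\gm\cap\dl)$, giving the mixed shape of statement~(3), whose front factor $\xsub{\gm\sm(\gm\cap\dl)}{\dl\sm(\gm\cap\dl)}\zsub{\gm\sm(\gm\cap\dl)}{\dl\sm(\gm\cap\dl)}\ksup{k+1}$ again contains $\ztau\ksup{k+1}$, with remaining binomial $g_{\gm\cup\dl}^{(k+1),\out}g_{\gm\cap\dl}^{(k+1),\out}-g_{\gm\cup\dl}^{(k+1),\into}g_{\gm\cap\dl}^{(k+1),\into}$. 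In every case the single factor of $\ztau\ksup{k+1}$ is absorbed into the monomial in front, so that
\[
S(\widetilde{g}_\gm,\widetilde{g}_\dl)=\ztau\ksup{k+1}\cdot m\cdot B,
\]
where $m$ is a monomial and $B$ is the binomial just described, whose two subsets $\Lambda_1,\Lambda_2$ are $\gm\sm(\gm\cap\dl),\dl\sm(\gm\cap\dl)$ or $\gm\cup\dl,\gm\cap\dl$. The term order of this expression is a priori unknown, but Proposition~\ref{unorderedreduction} shows that $B$ (in whichever order is correct) reduces to zero via $g_{\Lambda_1}\ksup{k+1}$ and $g_{\Lambda_2}\ksup{k+1}$, exactly as in Proposition~\ref{ksubsetoverlap}; multiplying through by $\ztau\ksup{k+1}m$, we conclude that $S(\widetilde{g}_\gm,\widetilde{g}_\dl)$ reduces to zero via $\ztau\ksup{k+1}g_{\Lambda_1}\ksup{k+1}$ and $\ztau\ksup{k+1}g_{\Lambda_2}\ksup{k+1}$, both of which lie in $\cG^\prime$. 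This is the reduction used in Cases~1 and~2(a) of Lemma~\ref{subsetoverlap}, and it gives $S(\widetilde{g}_\gm,\widetilde{g}_\dl)\xrightarrow{\left\{\ztau\ksup{k+1}g_\Lambda\ksup{k+1}\right\}}0$.

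The main obstacle is the bookkeeping step that shows the factor of $\ztau\ksup{k+1}$ produced by the S-polynomial is absorbed entirely into the common monomial factor, and not into one of the $g_\Lambda^{(k+1),\out}$ or $g_\Lambda^{(k+1),\into}$---this is what lets us reduce by the degree-raised generators $\ztau\ksup{k+1}g_\Lambda\ksup{k+1}$ (which belong to $\cG^\prime$) rather than by the $g_\Lambda\ksup{k+1}$ (which do not). It rests on the same dictionary between least common multiples and greatest common divisors of edge monomials on one side and unions and intersections of subsets of $G$ on the other that drives Propositions~\ref{subsetoverlapinterior}--\ref{subsetoverlapboundary}: an edge going out of both $\gm$ and $\dl$ necessarily runs from $\gm\cap\dl$ to $G\sm(\gm\cup\dl)$, and so forth. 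The remainder---tracking signs, verifying the $\out/\into$ swaps, and handling the degenerate sub-cases in which one of the four subsets involved is empty---is routine and proceeds exactly as in the proofs already given.
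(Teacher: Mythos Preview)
Your proposal is correct and follows essentially the same approach as the paper's proof: compute $S(\widetilde{g}_\gm,\widetilde{g}_\dl)$ directly to obtain a $\nu$-free binomial with both terms divisible by $\ztau\ksup{k+1}$, recognize it as the S-polynomial of $g_\gm\ksup{k+1}$ and $g_\dl\ksup{k+1}$ with their term orders reversed, and then run the same case analysis on the direction of the edge $\ztau\ksup{k+1}$ to see that it always lands in the front monomial factor of the relevant statement of Proposition~\ref{ksubsetoverlap}, enabling reduction by $\ztau\ksup{k+1}g_\Lambda\ksup{k+1}\in\cG^\prime$ via Proposition~\ref{unorderedreduction}. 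The only cosmetic differences are the order in which you write the two terms of the S-polynomial (the paper's line~(\ref{expandmedoubletilde}) is the negative of yours) and your phrasing of the cases in terms of ``out of both / into both / mixed'' rather than the paper's ``looks like Case~(1)/(2)/(3)''; these are equivalent.
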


\begin{proof}
Since we have assumed that $\ztau\ksup{k+1}$ divides the leading terms of $g_\gm\ksup{k}$ and $g_\dl\ksup{k}$ but does not divide the trailing terms, we know that $\ztau\ksup{k+1}$ is incident but not internal to both $\gm$ and $\dl$. Therefore $g_\gm\ksup{k}=g_\gm\ksup{k+1}$ and $g_\dl\ksup{k}=g_\dl\ksup{k+1}$. Let $d=\gcd\left(\trm{g_\gm\ksup{k+1}},\trm{g_\dl\ksup{k+1}}\right).$ Assuming that all coefficients are $\pm 1$, 
\begin{align}
\nonumber S(\widetilde{g}_\gm,\widetilde{g}_\dl)&=S(\nu \trt{g_\gm\ksup{k+1}} - \lt{g_\gm\ksup{k+1}},\nu \trt{g_\gm\ksup{k+1}} - \lt{g_\gm\ksup{k+1}})\\
\label{expandmedoubletilde}&=\frac{\trt{g_\dl\ksup{k+1}}}{d}\lt{g_\gm\ksup{k+1}} - \frac{\trt{g_\gm\ksup{k+1}}}{d}\lt{g_\dl\ksup{k+1}}.
\end{align}
The term order here is unknown, since $\nu$ divides neither term and $\ztau\ksup{k+1}$ divides both (because it divides the leading terms of both $g_\gm\ksup{k+1}$ and $g_\dl\ksup{k+1}$). We may expand this expression in terms of $g_\Lambda\ksup{k+1}$ for some combination of $\Lambda\in\{\gm\sm(\gm\cap\dl),\dl\sm(\gm\cap\dl),\gm\cap\dl,\gm\cup\dl\}$ just as in Proposition~\ref{ksubsetoverlap} or directly, using the computations in Propositions~\ref{subsetoverlapinterior},~\ref{subsetoverlapclosure}, and~\ref{subsetoverlapboundary}. Effectively, we are computing the S-polynomial of $g_\gm\ksup{k+1}$ and $g_\dl\ksup{k+1}$ both with their term orders reversed.

If the result of expanding line~(\ref{expandmedoubletilde}) looks like Case (1) of Proposition~\ref{ksubsetoverlap}, then both trailing terms must be products of outgoing edges, which means $\ztau\ksup{k+1}$ is incoming to both $\gm$ and $\dl$. Since it is internal to neither $\gm$ nor $\dl$, $\ztau\ksup{k+1}$ must go from $G\sm(\gm\cup\dl)$ to $\gm\cap\dl$. Then $\ztau\ksup{k+1}$ divides the factor of $\zsub{G\sm(\gm\cup\dl)}{\gm\cap\dl}\ksup{k+1}$ in front of the expanded expression. Therefore, regardless of term order, we may invoke Proposition~\ref{unorderedreduction} to reduce to zero by $\ztau\ksup{k+1}g_{\gm\sm(\gm\cap\dl)}\ksup{k+1}$ and $\ztau\ksup{k+1}g_{\dl\sm(\gm\cap\dl)}\ksup{k+1}$, which are in the working basis $\cG^\prime$.

If the result of expanding line (\ref{expandmedoubletilde}) looks like Case (2) of Proposition~\ref{ksubsetoverlap}, then both trailing terms are products of incoming edges, which means $\ztau\ksup{k+1}$ goes from $\gm\cap\dl$ to $G\sm(\gm\cup\dl)$. Therefore, it divides the factor of $\zsub{\gm\cap\dl}{G\sm(\gm\cup\dl)}\ksup{k+1}$ in front of the expanded expression. So we may again reduce by $\ztau\ksup{k+1}g_{\gm\sm(\gm\cap\dl)}\ksup{k+1}$ and $\ztau\ksup{k+1}g_{\dl\sm(\gm\cap\dl)}\ksup{k+1}$ regardless of term order.

If the result of expanding line (\ref{expandmedoubletilde}) looks like Case (3) of Proposition~\ref{ksubsetoverlap}, then the trailing term of $g_\gm\ksup{k+1}$ is a product of incoming edges and the trailing term of $g_\dl\ksup{k+1}$ is a product of outgoing edges. Therefore, $\ztau\ksup{k+1}$ goes from $\gm\sm(\gm\cap\dl)$ to $\dl\sm(\gm\cap\dl)$, which means that it divides the factor of $\zsub{\gm\sm(\gm\cap\dl)}{\dl\sm(\gm\cap\dl)}\ksup{k+1}$ in front of the expanded expression. Regardless of term order, we may reduce to zero by a combination of $\ztau\ksup{k+1}g_{\gm\cap\dl}\ksup{k+1}$ and $\ztau\ksup{k+1}g_{\gm\cup\dl}\ksup{k+1}$, both of which are in the working basis $\cG^\prime$.

The only other possibility is that the result of expanding line~(\ref{expandmedoubletilde}) looks like Case (3) of Proposition~\ref{ksubsetoverlap} with the roles of $\gm$ and $\dl$ reversed. In that case, reverse the roles of $\gm$ and $\dl$ in the previous paragraph's argument to see that we may again reduce to zero by $\ztau\ksup{k+1}g_{\gm\cap\dl}\ksup{k+1}$ and $\ztau\ksup{k+1}g_{\gm\cup\dl}\ksup{k+1}$.
\end{proof}

The remaining two propositions carry out the computations necessary to see that $S(\nu g_\gm\ksup{k},\ztau\ksup{k+1} g_\dl\ksup{k+1})\xrightarrow{\cG^\prime}0$ and $S(\nu g_\gm\ksup{k},\widetilde{g}_\dl)\xrightarrow{\cG^\prime}0$. In both cases, the computations themselves follow directly from our earlier results, but some additional work is required to see that reductions are always possible by generators in $\cG^\prime$. This will complete Table~\ref{round2table} and finish the proof of Lemma~\ref{buchend}.

\begin{lemma}
\label{2nunonu}
Let $\gm,\dl\subset G$. Then 
$S(\nu g_\gm\ksup{k},\ztau\ksup{k+1} g_\dl\ksup{k+1})\xrightarrow{\cG^\prime}0$
in $\cE_{k+1}[\nu]$.
\end{lemma}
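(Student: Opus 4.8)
The plan is to reduce the statement, via the shortcuts of Section~\ref{gbsimplify}, to S-polynomial computations already carried out in Lemma~\ref{subsetoverlap} and Proposition~\ref{ksubsetoverlap}, and to dispatch the one genuinely new configuration by the term-order analysis used in the proof of Lemma~\ref{subsetoverlap}.

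The opening moves are formal. Since $\nu$ is coprime to every edge variable, the second claim of Proposition~\ref{gbprinciplecoeff} gives $S(\nu g_\gm\ksup{k},\ztau\ksup{k+1}g_\dl\ksup{k+1})=\nu\,S(g_\gm\ksup{k},\ztau\ksup{k+1}g_\dl\ksup{k+1})$, so it suffices to reduce $\nu\,S(g_\gm\ksup{k},\ztau\ksup{k+1}g_\dl\ksup{k+1})$ inside $\cG^\prime$. Writing $g_\gm\ksup{k}=\anyzeta_\gm g_\gm\ksup{k+1}$ and $g_\dl\ksup{k}=\anyzeta_\dl g_\dl\ksup{k+1}$, I split into cases according to the values of $\anyzeta_\gm,\anyzeta_\dl\in\{\ztau\ksup{k+1},1\}$, that is, according to how the edge $\ztau\ksup{k+1}$ meets $\gm$ and $\dl$. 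If $\anyzeta_\gm=\ztau\ksup{k+1}$, the first claim of Proposition~\ref{gbprinciplecoeff} pulls the common factor of $\ztau\ksup{k+1}$ out of both inputs, leaving $\nu\ztau\ksup{k+1}\,S(g_\gm\ksup{k+1},g_\dl\ksup{k+1})$, which reduces to zero through suitable $\ztau\ksup{k+1}g_\Lambda\ksup{k+1}\in\cG^\prime$ by Proposition~\ref{ksubsetoverlap}. If $\anyzeta_\gm=1$ but $\anyzeta_\dl=\ztau\ksup{k+1}$, then $\ztau\ksup{k+1}g_\dl\ksup{k+1}=g_\dl\ksup{k}$; since $\nu$ divides the leading monomial of $\nu g_\gm\ksup{k}$, attaching a factor of $\nu$ to the second argument of $S(\nu g_\gm\ksup{k},-)$ changes neither the relevant least common multiple nor the S-polynomial, so $S(\nu g_\gm\ksup{k},g_\dl\ksup{k})=S(\nu g_\gm\ksup{k},\nu g_\dl\ksup{k})$, which Lemma~\ref{subsetoverlap} already reduces to zero in $\cG^\prime$. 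The remaining case, $\anyzeta_\gm=\anyzeta_\dl=1$, in which $g_\gm\ksup{k}=g_\gm\ksup{k+1}$ and $g_\dl\ksup{k}=g_\dl\ksup{k+1}$, is the substantive one; note that if $\gm=\dl$ the S-polynomial vanishes, so I may assume $\gm\neq\dl$.

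In that case I would compute $S(g_\gm\ksup{k+1},\ztau\ksup{k+1}g_\dl\ksup{k+1})$ by separating interior edges from boundary and closure edges (Proposition~\ref{separateinteriorclosure}) and invoking Propositions~\ref{subsetoverlapinterior},~\ref{subsetoverlapclosure}, and~\ref{subsetoverlapboundary}, just as in Proposition~\ref{ksubsetoverlap}; least common multiples and greatest common divisors again translate into unions and intersections of $\gm$, $\dl$, $\gm\cap\dl$, and $\gm\cup\dl$, the only new feature being the bookkeeping factor $\ztau\ksup{k+1}$ riding along on the second input. The work is in the term orders. Using Observation~\ref{MOanddiv} and recording exactly where $\ztau\ksup{k+1}$ sits relative to $\gm$, $\dl$, $\gm\cap\dl$, and $\gm\cup\dl$, I would identify the leading term of the expanded expression: when a leftover factor of $\ztau\ksup{k+1}$ survives out front --- the analogue of Cases 1 and 2(a) of Lemma~\ref{subsetoverlap} --- Proposition~\ref{unorderedreduction} closes the reduction using the $\ztau\ksup{k+1}g_\Lambda\ksup{k+1}$, and otherwise --- the analogue of Cases 2(b) and 3 --- I reduce first by $\nutopk$, then by a tilde generator $\widetilde{g}_{\gm\cap\dl}$ (or $\widetilde{g}_\gm$, or $\widetilde{g}_\dl$), then by an appropriate $\ztau\ksup{k+1}g_{\gm\cup\dl}\ksup{k+1}$, exactly as there; in a few subcases a single $\nu g_\Lambda\ksup{k}\in\cG_0$ suffices.

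I expect the main obstacle to be the two delicacies that already make the proof of Lemma~\ref{subsetoverlap} intricate: confirming that the tilde generator needed in a hard subcase actually lies in $\cG^\prime$ --- which comes down to reading off from the incidence of $\ztau\ksup{k+1}$ with the relevant subset that $\ztau\ksup{k+1}$ divides its leading but not its trailing monomial --- and threading the largely undetermined term order, controlled only by $\nu$ and $\ztau\ksup{k+1}$, through the two or three successive reductions. Once the case split on how $\ztau\ksup{k+1}$ meets $\gm$, $\dl$, and their union and intersection is fixed, each subcase is a routine run of Observation~\ref{MOanddiv} and Proposition~\ref{unorderedreduction}, matching the sample computations for this lemma tabulated in Section~\ref{examplesection}.
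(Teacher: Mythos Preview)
Your approach is correct and closely parallels the paper's. Both proofs pull out $\nu$ via Proposition~\ref{gbprinciplecoeff}, then case on how $\ztau\ksup{k+1}$ sits relative to $\gm$ and $\dl$ to reduce to Lemma~\ref{subsetoverlap} or to expressions with a front factor of $\ztau\ksup{k+1}$ handled by Proposition~\ref{ksubsetoverlap}.

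The one place the paper is sharper is your ``substantive case'' $\anyzeta_\gm=\anyzeta_\dl=1$. Rather than expanding $S(g_\gm\ksup{k+1},\ztau\ksup{k+1}g_\dl\ksup{k+1})$ from scratch via Propositions~\ref{subsetoverlapinterior}--\ref{subsetoverlapboundary} and then running a new term-order analysis, the paper applies Proposition~\ref{gbprinciplecoeff} once more, splitting on whether $\ztau\ksup{k+1}$ divides $\lm{g_\gm\ksup{k+1}}$. If it does not, Statement~(2) of Proposition~\ref{gbprinciplecoeff} gives a clean factor $\nu\ztau\ksup{k+1}S(g_\gm\ksup{k+1},g_\dl\ksup{k+1})$ and the $\ztau\ksup{k+1}g_\Lambda\ksup{k+1}$ finish it off; if it does, Statement~(3) strips the extra $\ztau\ksup{k+1}$ entirely, leaving $\nu S(g_\gm\ksup{k+1},g_\dl\ksup{k+1})$ with $\ztau\ksup{k+1}$ internal to neither $\gm$ nor $\dl$, which is \emph{literally} Case~3 of Lemma~\ref{subsetoverlap}. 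So no new S-polynomial expansion or reduction argument is needed at all. Your plan would arrive at the same place, but with redundant work; in particular your reference to an ``analogue of Case~2(b)'' is unnecessary here, since only the Case~3 configuration of Lemma~\ref{subsetoverlap} actually occurs.
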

\begin{proof}
This S-polynomial is closely related to the one considered in Lemma~\ref{subsetoverlap}. By Statement (2) of Proposition~\ref{gbprinciplecoeff}, we have 
$$S(\nu g_\gm\ksup{k},\ztau\ksup{k+1} g_\dl\ksup{k+1})=\nu S(g_\gm\ksup{k},\ztau\ksup{k+1} g_\dl\ksup{k+1}).$$

If $\ztau\ksup{k+1}$ is internal to $\dl$, then $$\nu S(g_\gm\ksup{k},\ztau\ksup{k+1} g_\dl\ksup{k+1})=\nu S(g_\gm\ksup{k}, g_\dl\ksup{k})=S(\nu g_\gm\ksup{k}, \nu g_\dl\ksup{k}).$$ We have already showed in Lemma~\ref{subsetoverlap} that this S-polynomial reduces to zero within $\cG^\prime$.

If $\ztau\ksup{k+1}$ is not internal to $\dl$, then we have the following breakdown of cases.
\begin{enumerate}
\item $\ztau\ksup{k+1}$ is internal to $\gm$. Then 
\begin{align*}
\nu S(g_\gm\ksup{k},\ztau\ksup{k+1} g_\dl\ksup{k+1})&=\nu S(\ztau\ksup{k+1}g_\gm\ksup{k+1},\ztau\ksup{k+1} g_\dl\ksup{k+1})\\
&=\nu\ztau\ksup{k+1} S(g_\gm\ksup{k+1},g_\dl\ksup{k+1}),
\end{align*}
by Statement~(1) of Proposition~\ref{gbprinciplecoeff}. After expanding via Proposition~\ref{ksubsetoverlap}, we may reduce this expression to zero by some combination of $\ztau\ksup{k+1}g_\Lambda\ksup{k+1}$ for $\Lambda\in\{\gm\sm(\gm\cap\dl),\dl\sm(\gm\cap\dl),\gm\cap\dl,\gm\cup\dl\}$.
\item $\ztau\ksup{k+1}$ divides neither term of $g_\gm\ksup{k+1}$. Then $g_\gm\ksup{k}=g_\gm\ksup{k+1}$ and Statement (2) of Proposition~\ref{gbprinciplecoeff} implies that 
$$\nu S(g_\gm\ksup{k},\ztau\ksup{k+1} g_\dl\ksup{k+1}) = \nu\ztau\ksup{k+1} S(g_\gm\ksup{k+1},g_\dl\ksup{k+1}).$$
After expanding via Proposition~\ref{ksubsetoverlap}, we may reduce this expression to zero by some combination of $\ztau\ksup{k+1}g_\Lambda\ksup{k+1}$ for $\Lambda\in\{\gm\sm(\gm\cap\dl),\dl\sm(\gm\cap\dl),\gm\cap\dl,\gm\cup\dl\}$.
\item $\ztau\ksup{k+1}$ divides exactly one term, hence the leading term, of $g_\gm\ksup{k+1}$. Then $g_\gm\ksup{k}=g_\gm\ksup{k+1}$ and Statement~(3) of Proposition~\ref{gbprinciplecoeff} implies that
\begin{align*}
\nu S(g_\gm\ksup{k},\ztau\ksup{k+1} g_\dl\ksup{k+1})&=\nu S(g_\gm\ksup{k+1},\ztau\ksup{k+1} g_\dl\ksup{k+1})\\
&=\nu S(g_\gm\ksup{k+1},g_\dl\ksup{k+1}).
\end{align*}
Also, our assumptions to this point mean that $\ztau\ksup{k+1}$ is internal to neither $\gm$ nor $\dl$. Therefore, we are in the same situation as Case (3) of Lemma~\ref{subsetoverlap}, in which we have already established that we may reduce to zero by generators already contained in $\cG^\prime$.
\end{enumerate}
\end{proof}

\begin{lemma}
\label{tildeprop}
Let $\gm, \dl\subset G$ and assume that $\widetilde{g}_\dl\in\cG^\prime$.  Then $S(\nu g_\gm\ksup{k},\widetilde{g}_\dl)\xrightarrow{\cG^\prime}0$ in $\cE_{k+1}[\nu]$.
\end{lemma}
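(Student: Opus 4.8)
The plan is to compute $S(\nu g_\gm\ksup{k},\widetilde{g}_\dl)$ by the same strategy used in Lemma~\ref{subsetoverlap} and Lemma~\ref{doubletilde}, tracking term orders carefully. First I would record that $\widetilde{g}_\dl\in\cG^\prime$ forces $\ztau\ksup{k+1}$ to divide $\lt{g_\dl\ksup{k}}$ but not $\trt{g_\dl\ksup{k}}$, hence $\ztau\ksup{k+1}$ is incident but not internal to $\dl$ and $g_\dl\ksup{k}=g_\dl\ksup{k+1}$. By Equation~\ref{tildegendfn}, $\widetilde{g}_\dl=\nu\trt{g_\dl\ksup{k+1}}-\lt{g_\dl\ksup{k+1}}$, so $\widetilde{g}_\dl$ is effectively $\nu g_\dl\ksup{k+1}$ with its term order reversed. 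I would then split into cases on $\anyzeta_\gm$: either $\ztau\ksup{k+1}$ is internal to $\gm$ (so $g_\gm\ksup{k}=\ztau\ksup{k+1}g_\gm\ksup{k+1}$) or it is not (so $g_\gm\ksup{k}=g_\gm\ksup{k+1}$), and in the latter case subdivide on whether $\ztau\ksup{k+1}$ divides a term of $g_\gm\ksup{k+1}$.

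The key computation in each case is to rewrite $g_\gm\ksup{k}=\anyzeta_\gm g_\gm\ksup{k+1}$, apply Proposition~\ref{gbprinciplecoeff} to pull out the factor of $\nu$ (and any factor of $\ztau\ksup{k+1}$), and then use Propositions~\ref{subsetoverlapinterior},~\ref{subsetoverlapclosure},~\ref{subsetoverlapboundary} together with Proposition~\ref{separateinteriorclosure} to express $S(g_\gm\ksup{k+1},\widetilde{g}_\dl)$ as a multiple of a combination of the $g_\Lambda\ksup{k+1}$ for $\Lambda\in\{\gm\sm(\gm\cap\dl),\dl\sm(\gm\cap\dl),\gm\cap\dl,\gm\cup\dl\}$ — exactly as in Proposition~\ref{ksubsetoverlap}, but with $g_\dl$'s term order reversed. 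The ``monomial in front'' of this combination always contains a factor of one of $\zsub{G\sm(\gm\cup\dl)}{\gm\cap\dl}\ksup{k+1}$, $\zsub{\gm\cap\dl}{G\sm(\gm\cup\dl)}\ksup{k+1}$, or $\zsub{\gm\sm(\gm\cap\dl)}{\dl\sm(\gm\cap\dl)}\ksup{k+1}$ (depending on which ``case of Proposition~\ref{ksubsetoverlap}'' the expansion resembles), and since $\ztau\ksup{k+1}$ is incident but not internal to $\dl$, that front factor is divisible by $\ztau\ksup{k+1}$. When the leading term of the S-polynomial is divisible by both $\nu$ and $\ztau\ksup{k+1}$, I first reduce by $\nutopk$ to strip $\nu$, leaving an expression with $\ztau\ksup{k+1}$ in front of a combination of the $g_\Lambda\ksup{k+1}$ of unknown term order, which Proposition~\ref{unorderedreduction} lets me reduce to zero using $\ztau\ksup{k+1}g_\Lambda\ksup{k+1}$ — all of which lie in $\cG^\prime$. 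When instead $\nu$ divides both terms (so the term order is genuinely ambiguous), I reduce by whichever of $\widetilde{g}_\gm$ (if $\widetilde{g}_\gm\in\cG^\prime$) or $\nu g_\gm\ksup{k+1}$ applies to strip the first term; the residue then carries a $\ztau\ksup{k+1}$ factor in front and reduces by the $\ztau\ksup{k+1}g_\Lambda\ksup{k+1}$, mirroring Case~3 of Lemma~\ref{subsetoverlap}. The case where $\ztau\ksup{k+1}$ is internal to $\gm$ reduces to $\nu\ztau\ksup{k+1}S(g_\gm\ksup{k+1},g_\dl^{\text{rev}})$, which is the easy ``factor of $\ztau\ksup{k+1}$ out front'' situation.

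The main obstacle is bookkeeping of term orders: because $\widetilde{g}_\dl$ has its term order reversed relative to $\nu g_\dl\ksup{k}$, the expansion of the S-polynomial does not line up with the ``same case'' of Proposition~\ref{ksubsetoverlap} that $S(\nu g_\gm\ksup{k},\nu g_\dl\ksup{k})$ would use, so one must re-examine which of the three forms applies and verify that the corresponding front factor really is divisible by $\ztau\ksup{k+1}$ under the hypothesis that $\ztau\ksup{k+1}$ is incident-but-not-internal to $\dl$. One must also confirm that when $\widetilde{g}_\gm$ is needed for a reduction it actually belongs to $\cG^\prime$ — i.e., that $\ztau\ksup{k+1}$ divides $\lt{g_\gm\ksup{k}}$ but not $\trt{g_\gm\ksup{k}}$ in precisely those subcases where it is invoked — and otherwise fall back on $\nu g_\gm\ksup{k+1}$ or $\nutopk$. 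Apart from this case analysis, every individual reduction is an instance of a lemma already proved, so I expect the argument to go through without new ideas, just careful casework analogous to (and lighter than) the proof of Lemma~\ref{subsetoverlap}.
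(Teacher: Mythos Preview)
Your outline is close to the paper's argument in the first two cases, but there is a genuine gap in the subcase where $\ztau\ksup{k+1}$ is incident to neither term of $g_\gm\ksup{k}$ (so $\ztau\ksup{k+1}$ is not incident to $\gm$ at all). In that situation the S-polynomial, after stripping $\nu$ via Proposition~\ref{gbprinciplecoeff}(3), is
\[
S(g_\gm\ksup{k+1},\widetilde{g}_\dl)=\nu\,\frac{\trt{g_\dl\ksup{k+1}}}{d}\,\trt{g_\gm\ksup{k+1}}-\frac{\lt{g_\gm\ksup{k+1}}}{d}\,\lt{g_\dl\ksup{k+1}},
\]
so exactly one term carries $\nu$ (your ``$\nu$ divides both terms'' subcase never occurs here) and the $\nu$-term is \emph{not} divisible by $\ztau\ksup{k+1}$, since $\ztau\ksup{k+1}$ divides neither $\trt{g_\dl\ksup{k+1}}$ nor any term of $g_\gm\ksup{k+1}$. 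Hence none of your proposed fallbacks work: $\widetilde{g}_\gm\notin\cG^\prime$ because $\ztau\ksup{k+1}$ is not incident to $\gm$; $\nutopk$ cannot reduce the leading term because it lacks the factor $\ztau\ksup{k+1}$; and $\nu g_\gm\ksup{k+1}$ cannot reduce it because $\lm{g_\gm\ksup{k+1}}$ does not divide $\frac{\trm{g_\dl\ksup{k+1}}}{d}\trm{g_\gm\ksup{k+1}}$. Your claim that the ``monomial in front'' of the expanded expression is always divisible by $\ztau\ksup{k+1}$ is false here too: for instance if $\dl$ is in-led then $\ztau\ksup{k+1}$ goes from $G\sm(\gm\cup\dl)$ into $\dl\sm(\gm\cap\dl)$, so it does not appear in the front factor $\zsub{G\sm(\gm\cup\dl)}{\gm\cap\dl}\ksup{k+1}$.

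The missing idea is that in this subcase one should reduce by the tilde generator associated to a \emph{different} subset, namely $\widetilde{g}_{\dl\sm(\gm\cap\dl)}$ when $\dl$ is in-led and $\widetilde{g}_{\gm\cup\dl}$ when $\dl$ is out-led. These lie in $\cG^\prime$ precisely because $\ztau\ksup{k+1}$ is incident but not internal to $\dl\sm(\gm\cap\dl)$ (respectively $\gm\cup\dl$) under the stated hypotheses, and after reducing by them the remaining expression acquires a factor of $g_{\dl\sm(\gm\cap\dl)}^{(k+1),\into}$ (respectively $g_{\gm\cup\dl}^{(k+1),\out}$), which \emph{is} divisible by $\ztau\ksup{k+1}$, allowing a final reduction by $\ztau\ksup{k+1}g_{\gm\sm(\gm\cap\dl)}\ksup{k+1}$ (respectively $\ztau\ksup{k+1}g_{\gm\cap\dl}\ksup{k+1}$). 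This is the step that has no analogue in Lemma~\ref{subsetoverlap} or Lemma~\ref{doubletilde}, and your proposal does not supply it.
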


\begin{proof}
For the most part, the argument here is parallel to the proof of Lemma~\ref{subsetoverlap}. However,  the leading terms in the results of these S-polynomials are determined by the presence of $\nu$ on exactly one of the terms, so it is not clear that the same reductions are always possible. Applying Statement (3) of Proposition~\ref{gbprinciplecoeff}, we have 
$S(\nu g_\gm\ksup{k}, \widetilde{g}_\dl)=S(g_\gm\ksup{k}, \widetilde{g}_\dl).$
Note that our assumption that $\widetilde{g}_\dl\in\cG^\prime$ implies that $\ztau\ksup{k+1}$ is not internal to $\dl$ and that $g_\dl\ksup{k}=g_\dl\ksup{k+1}$. We consider the following cases, which parallel those in Lemma~\ref{subsetoverlap}.

\begin{enumerate}
\item $\ztau\ksup{k+1}$ is internal to $\gm$
\item $\ztau\ksup{k+1}$ divides exactly one term, hence the leading term of $g_\gm\ksup{k+1}$.
\item $\ztau\ksup{k+1}$ divides neither term of $g_\gm\ksup{k+1}$.
\end{enumerate}

\noi\emph{Case 1:} If $\ztau\ksup{k+1}$ is internal to $\gm$, then $g_\gm\ksup{k}=\ztau\ksup{k+1}g_\gm\ksup{k+1}$. We know that $\ztau\ksup{k+1}$ does not divide the trailing term of $g_\dl\ksup{k}$, so it does not divide the leading term of $\widetilde{g}_\dl$. Applying Statement (2) of Proposition~\ref{gbprinciplecoeff}, we have
\begin{align*}
S(g_\gm\ksup{k}, \widetilde{g}_\dl)&=S(\ztau\ksup{k+1}g_\gm\ksup{k+1}, \widetilde{g}_\dl)\\
&=\ztau\ksup{k+1}S(g_\gm\ksup{k+1}, \widetilde{g}_\dl)\\
&=\ztau\ksup{k+1}S(\lt{g_\gm\ksup{k+1}}-\trt{g_\gm\ksup{k+1}}, \nu \trt{g_\dl\ksup{k+1}}-\lt{g_\dl\ksup{k+1}})
\end{align*}
Let $d=\gcd\left(\lm{g_\gm\ksup{k+1}}, \trm{g_\dl\ksup{k+1}}\right)$. Note that $\ztau\ksup{k+1}$ does not divide $d$. Then we expand and reduce the S-polynomial above as follows, assuming that coefficients are all $\pm 1$.
\begin{align*}
&=\ztau\ksup{k+1}\left(\nu\frac{\trt{g_\dl\ksup{k+1}}}{d}\trt{g_\gm\ksup{k+1}} - \frac{\lt{g_\gm\ksup{k+1}}}{d}\lt{g_\dl\ksup{k+1}}\right)\\
&\xrightarrow{\nutopk} \ztau\ksup{k+1}\left(\frac{\trt{g_\dl\ksup{k+1}}}{d}\trt{g_\gm\ksup{k+1}} - \frac{\lt{g_\gm\ksup{k+1}}}{d}\lt{g_\dl\ksup{k+1}}\right)
\end{align*}
The term order in the last expression is unknown. The expression inside the parentheses may be expanded in terms of $g_\Lambda\ksup{k+1}$ using Proposition~\ref{ksubsetoverlap}, for $\Lambda\in\{\gm\sm(\gm\cap\dl),\dl\sm(\gm\cap\dl),\gm\cap\dl,\gm\cup\dl\}$. Since the factor of $\ztau\ksup{k+1}$ is available out front, we may then reduce by $\ztau\ksup{k+1}g_\Lambda\ksup{k+1}$ for the appropriate combination of $\Lambda$. All such generators are contained in $\cG^\prime$.

\medskip

In Cases 2 and 3, $\ztau\ksup{k+1}$ is not internal to $\gm$, and we continue to assume that it is not internal to $\dl$. Then $g_\gm\ksup{k}=g_\gm\ksup{k+1}$, so we expand the S-polynomial as in Case 1, but do not obtain a factor of $\ztau\ksup{k+1}$ in front.
\begin{align}
\nonumber S(g_\gm\ksup{k}, \widetilde{g}_\dl)&=S(g_\gm\ksup{k+1}, \widetilde{g}_\dl)\\
\label{reduceme}&=\nu\frac{\trt{g_\dl\ksup{k+1}}}{d}\trt{g_\gm\ksup{k+1}} - \frac{\lt{g_\gm\ksup{k+1}}}{d}\lt{g_\dl\ksup{k+1}}
\end{align}

\noi\emph{Case 2:}
Since we have assumed that $\ztau\ksup{k+1}$ divides $\lt{g_\gm\ksup{k}}$ but not $\trt{g_\gm\ksup{k}}$, we have $\widetilde{g}_\gm\in\cG^\prime$.  We may use it to reduce line~(\ref{reduceme}) above.
\begin{align*}
S(g_\gm\ksup{k}, \widetilde{g}_\dl)\xrightarrow{\widetilde{g}_\gm}&
\frac{\trt{g_\dl\ksup{k+1}}}{d}\lt{g_\gm\ksup{k+1}}-\frac{\lt{g_\gm\ksup{k+1}}}{d}\lt{g_\dl\ksup{k+1}}\\
=&\frac{\lt{g_\gm\ksup{k+1}}}{d}g_\dl\ksup{k+1}
\end{align*}
Since $\ztau\ksup{k+1}$ is not internal to $\dl$, it does not divide $\trt{g_\dl\ksup{k+1}}$, which means it also does not divide $d$. Our assumption that $\ztau\ksup{k+1}$ divides $\lt{g_\gm\ksup{k+1}}$ then implies that we may reduce further by $\ztau\ksup{k+1}g_\dl\ksup{k+1}$. So we have showed that $S(\nu g_\gm\ksup{k},\widetilde{g}_\dl)$ reduces to zero via $\widetilde{g}_\gm$ and $\ztau\ksup{k+1}g_\dl\ksup{k+1}$, both of which are in $\cG^\prime$.

\noi\emph{Case 3:} Here we have assumed that $\ztau\ksup{k+1}$ is not incident to $\gm$, and we continue to assume that it divides only the leading term of $g_\dl\ksup{k}=g_\dl\ksup{k+1}$. We would like to reduce the expression in line~(\ref{reduceme}). Since $\ztau\ksup{k+1}$ is not incident to $\gm$, we do not have $\widetilde{g}_\gm$ available in $\cG^\prime$ in this case. Instead, we must expand line~(\ref{reduceme}) using Propositions~\ref{subsetoverlapinterior},~\ref{subsetoverlapclosure}, and~\ref{subsetoverlapboundary}, keeping careful track of which term retains the factor of $\nu$. Since $\ztau\ksup{k+1}$ is not incident to $\gm$, we know that $\gm$ is out-led. The computation will depend on whether $\dl$ is in-led or out-led. 

Suppose first that $\dl$ is in-led. Then $\ztau\ksup{k+1}$ goes from $G\sm(\gm\cup\dl)$ to $\dl\sm(\gm\cap\dl)$. We use Statement~(\ref{subsetinteriorout}) in each of Propositions~\ref{subsetoverlapinterior},~\ref{subsetoverlapclosure}, and~\ref{subsetoverlapboundary} to expand. We use $\widetilde{g}_{\dl\sm(\gm\cap\dl)}$ to reduce the resulting expression. It is available in $\cG^\prime$ because $\ztau\ksup{k+1}$ is incident but not internal to $\dl\sm(\gm\cap\dl)$.
\begin{align*}
S(g_\gm\ksup{k}, \widetilde{g}_\dl)
=&\,\xsub{G\sm(\gm\cap\dl)}{\gm\cap\dl}\zsub{G\sm(\gm\cap\dl)}{\gm\cap\dl}\ksup{k+1}\zsub{\beta}{\gm\cap\dl}\ksup{k+1}\\
&\cdot\,\left(\nu g_{\dl\sm(\gm\cap\dl)}^{(k+1),\out}g_{\gm\sm(\gm\cap\dl)}^{(k+1),\into} - g_{\gm\sm(\gm\cap\dl)}^{(k+1),\out}g_{\dl\sm(\gm\cap\dl)}^{(k+1),\into}\right)\\
\text{reduce}\quad-\,&\widetilde{g}_{\dl\sm(\gm\cap\dl)}\cdot \xsub{G\sm(\gm\cap\dl)}{\gm\cap\dl}\zsub{G\sm(\gm\cap\dl)}{\gm\cap\dl}\ksup{k+1}\zsub{\beta}{\gm\cap\dl}\ksup{k+1}g_{\gm\sm(\gm\cap\dl)}^{(k+1),\into} \\
=&\,\xsub{G\sm(\gm\cap\dl)}{\gm\cap\dl}\zsub{G\sm(\gm\cap\dl)}{\gm\cap\dl}\ksup{k+1}\zsub{\beta}{\gm\cap\dl}\ksup{k+1}g_{\dl\sm(\gm\cap\dl)}^{(k+1),\into}g_{\gm\sm(\gm\cap\dl)}\ksup{k+1}.
\end{align*}
Since $\ztau\ksup{k+1}$ divides$g_{\dl\sm(\gm\cap\dl)}^{(k+1),\into}$, we may reduce the above expression to zero by $\ztau\ksup{k+1}g_{\gm\sm(\gm\cap\dl)}\ksup{k+1}$, which is in $\cG^\prime$.

Suppose instead that $\dl$ is out-led. Then $\ztau\ksup{k+1}$ goes from $\dl\sm(\gm\cap\dl)$ to $G\sm(\gm\cup\dl)$. We use Statement~(\ref{subsetinteriormix}) in each of Propositions~\ref{subsetoverlapinterior},~\ref{subsetoverlapclosure}, and~\ref{subsetoverlapboundary}, with the roles of $\gm$ and $\dl$ reversed. We will use $\widetilde{g}_{\gm\cup\dl}$ to reduce the resulting expression. It is available in $\cG^\prime$ because $\ztau\ksup{k+1}$ is incident but not internal to $\gm\cup\dl$.
\begin{align*}
S(g_\gm\ksup{k}, \widetilde{g}_\dl)=&\,\xsub{\dl\sm(\gm\cap\dl)}{\gm\sm(\gm\cap\dl)}\zsub{\dl\sm(\gm\cap\dl)}{\gm\sm(\gm\cap\dl)}\ksup{k+1}\\
&\cdot\,\left(\nu g_{\gm\cup\dl}^{(k+1),\into}g_{\gm\cap\dl}^{(k+1),\into}-g_{\gm\cup\dl}^{(k+1),\out}g_{\gm\cap\dl}^{(k+1),\out}\right)\\
\text{reduce}\quad-\,&\widetilde{g}_{\gm\cup\dl}\cdot\xsub{\dl\sm(\gm\cap\dl)}{\gm\sm(\gm\cap\dl)}\zsub{\dl\sm(\gm\cap\dl)}{\gm\sm(\gm\cap\dl)}\ksup{k+1}g_{\gm\cap\dl}^{(k+1),\into}\\
=&\,\xsub{\dl\sm(\gm\cap\dl)}{\gm\sm(\gm\cap\dl)}\zsub{\dl\sm(\gm\cap\dl)}{\gm\sm(\gm\cap\dl)}\ksup{k+1}g_{\gm\cup\dl}^{(k+1),\out}g_{\gm\cap\dl}\ksup{k+1}
\end{align*}
Since $\ztau\ksup{k+1}$ divides $g_{\gm\cup\dl}^{(k+1),\out}$, we may reduce the above expression to zero by $\ztau\ksup{k+1}g_{\gm\cap\dl}\ksup{k+1}$, which is in $\cG^\prime$.
\end{proof}

\section{Extending to non-blackboard framings}
\label{nonbbframings}

We adopted the blackboard framing assumption in Section~\ref{startingbasissec} when setting up the starting basis for Buchberger's Algorithm. We now pick up from there to describe the modifications necessary to generalize to arbitrary framings. Refer to Section~\ref{twistedexamplesection} to see how these modifications change the set-up of the small example in Section~\ref{examplesection}.

\subsection{Set-up}
\label{twistedsetup}
In non-blackboard framed graphs, the weight of $\gm$ as a subset in $G\ksup{i}$ depends on $i$. As we close strands of the braid, non-blackboard-framed boundary edges may become internal to $\gm$, and therefore contribute to its weight. Let $\weight_i(\gm)$ denote the weight of $\gm$ as a subset in $G\ksup{i}$. That is, $\weight_i(\gm)$ is the sum of the framings on edges that are internal to $\gm$ in $G\ksup{i}$. Recall that the generator of $N_i$ associated to a set $\gm$ was defined to be
$t^{\weight_i(\gm)}x_{\gm,\out}-y_{\gm,\into}$.

When we pass to the edge ring, the monomial $y_{\gm,\into}$ picks up a coefficient as each $y_i$ is replaced by $t^{-\ell_i}x_i$, where $\ell_i$ denotes the framing on the edge labeled by $x_i$ and $y_i$. Let $\weight_{i,\into}(\gm)$ denote the sum of the framings on edges incoming to $\gm$ in $G\ksup{i}$ from its complement or from the bottom boundary. There is a dependence on $i$ because edges that were incoming to $\gm$ from the bottom boundary may become internal to $\gm$ as we close braid strands. It will be convenient to clear denominators in our standard generators for $N_i$ before beginning Buchberger's Algorithm, so we multiply the usual generators by $t^{\weight_{i,\into}}$. Using the more detailed notation of Section~\ref{mosec}, we let
\begin{equation}
h_\gm\ksup{k}=t^{\weight_k(\gm)+\weight_{k,\into}(\gm)}x_{\gm,G\sm\gm}z_{\gm,G\sm\gm}\ksup{k}z_{\gm,\tau}\ksup{k}-x_{G\sm\gm,\gm}z_{G\sm\gm,\gm}\ksup{k}z_{\beta,\gm}\ksup{k}
\end{equation}
be the new standard form of a generator of $N_k$ and
\begin{equation}
\label{ggmkplusonetw}
g_\gm\ksup{k+1}=t^{\weight_{k+1}(\gm)+\weight_{k+1,\into}(\gm)}x_{\gm,G\sm\gm}z_{\gm,G\sm\gm}\ksup{k+1}z_{\gm,\tau}\ksup{k+1}-x_{G\sm\gm,\gm}z_{G\sm\gm,\gm}\ksup{k+1}z_{\beta,\gm}\ksup{k+1}
\end{equation}
be the new standard form of a generator of $N_{k+1}$. Compare to Equations~\ref{ggmkdfn} and~\ref{ggmkplusonedfn}.

Applying $\pi_k$ to $h_\gm\ksup{k}$ may also cause it to pick up a new factor of $t$. Recall that $\pi_i$ is the quotient map $\cE_i\to\cE_{i+1}$ with kernel $Z_{i+1}=\left(\ztau\ksup{i+1}-t^{a_{i+1}}\zbeta\ksup{i+1}\right)$, where $a_{i+1}$ is the framing on the top edge of the $(i+1)^{\text{st}}$ strand of $G$. Recall also that $\pi_i$ retains the label $\ztau\ksup{i+1}$. So applying $\pi_k$ produces a factor of $t^{-a_{k+1}}$ whenever $\zbeta\ksup{k+1}$ appears.
Let
\[\weight_{k,\tau}(\gm)=\begin{cases}
a_{k+1} & \text{if }\zbeta\ksup{k+1} \text{ is incoming to $\gm$ in }G\ksup{k}\\
0 & \text{otherwise.}
\end{cases}\]
Then we have
\begin{align*}
\pi_k\!\left(h_\gm\ksup{k}\right)&=t^{\weight_{k}(\gm)+\weight_{k,\into}(\gm)}x_{\gm,G\sm\gm}z_{\gm,G\sm\gm}\ksup{k+1}z_{\gm,\tau}\ksup{k+1}\anyzeta_\gm\\&-x_{G\sm\gm,\gm}z_{G\sm\gm,\gm}\ksup{k+1}z_{\beta,\gm}\ksup{k+1}t^{-\weight_{k,\tau}(\gm)}\anyzeta_\gm,
\end{align*}
where $\anyzeta_\gm$ is defined as in Equation~\ref{anyzetadfn} to be $\ztau\ksup{k+1}$ if $\ztau\ksup{k+1}$ is internal to $\gm$ in $G\ksup{k+1}$ and $1$ otherwise. The collection of $\pi_k\!\left(h_\gm\ksup{k}\right)$, taken over all subsets $\gm$ in $G$, is a basis for $\pi_k(N_k)$ just as it was in the blackboard framed case. We will clear denominators to arrive at a more convenient starting basis for Buchberger's Algorithm. Define
\begin{equation}
\label{ggmktw}
g_\gm\ksup{k}=t^{\weight_{k,\tau}(\gm)}\pi_k\!\left(h_\gm\ksup{k}\right).
\end{equation}
The collection of $g_\gm\ksup{k}$ is also a basis for $\pi_k(N_k)$. 

The advantage of clearing denominators is that $g_\gm\ksup{k}$ and $g_\gm\ksup{k+1}$ now have the same relationship as in the blackboard framed case:
$g_\gm\ksup{k}=\anyzeta_\gm g_\gm\ksup{k+1}$,
just as in Equation~\ref{ggmkvkplusone}. The equality follows from the fact that \[\weight_k(\gm)+\weight_{k,\into}(\gm)+\weight_{k,\tau}(\gm)=\weight_{k+1}(\gm)+\weight_{k+1,\into}(\gm)\] for any $\gm$.
With the new notation of this section, we may again say that 
\begin{equation*}
\cG_0=\left\{\nu g_\gm\ksup{k}\,\vert\,\gm\subset G\right\}\cup\left\{\nu\ztau\ksup{k+1}-\ztau\ksup{k+1}\right\},
\end{equation*}
is our starting basis for Buchberger's Algorithm. Compare to Equation~\ref{startingbasis}.

\subsection{Modifications for Section~\ref{examplesection} example}
\label{twistedexamplesection}
Refer to Figure~\ref{smalltwistedexample}. The various weights associated to $\gm$, $\dl$, and $\gm\cup\dl$ are shown in Table~\ref{weighttable}. The generators associated to $\gm$, $\dl$, and $\gm\cup\dl$ are as follows. Observe that the generators of $N_1$ and $N_2$ are related exactly as they were in the blackboard framing case after we have modified their coefficients as described in the previous section.

\begin{table}
\begin{center}
\renewcommand{\tabcolsep}{11pt}
\begin{tabular}{lll}
$\weight_1(\gm)=\ell_0$ &
$\weight_1(\dl)=0$ &
$\weight_1(\gm\cup\dl)=\ell_0+\ell_2$ \\
$\weight_{1,\into}(\gm)=\ell_2$ &
$\weight_{1,\into}(\dl)=\ell_3+\ell_5$ &
$\weight_{1,\into}(\gm\cup\dl)=\ell_3+\ell_5$ \\
$\weight_{1,\tau}(\gm)=0$ &
$\weight_{1,\tau}(\dl)=\ell_1$ &
$\weight_{1,\tau}(\gm\cup\dl)=\ell_1$ \\
$\weight_2(\gm)=\ell_0$ &
$\weight_2(\dl)=0$ &
$\weight_2(\gm\cup\dl)=\ell_0+\ell_1+\ell_2+\ell_3$ \\
$\weight_{2,\into}(\gm)=\ell_2$ &
$\weight_{2,\into}(\dl)=\ell_1+\ell_3+\ell_5$ &
$\weight_{2,\into}(\gm\cup\dl)=\ell_5$ \\
\end{tabular}
\end{center}
\caption{Weights for the graphs in Figure~\ref{smalltwistedexample}.}
\label{weighttable}
\end{table}

\begin{align*}
h_\gm\ksup{1}&=t^{\weight_{1,\into}(\gm)}\left(t^{\weight_1(\gm)}x_1-t^{-\ell_2}x_2\right)\\
&=t^{\ell_0+\ell_2}x_1-x_2\\
g_\gm\ksup{1}&=t^{\weight_{1,\tau}(\gm)}\pi_1\!\left(h_\gm\ksup{1}\right)\\
&=t^{\ell_0+\ell_2}x_1-x_2\\
g_\gm\ksup{2}&=t^{\weight_{2,\into}(\gm)}\left(t^{\weight_2(\gm)}x_1-t^{-\ell_2}x_2\right)\\
&=t^{\ell_0+\ell_2}x_1-x_2
\end{align*}

\begin{align*}
h_\dl\ksup{1}&=t^{\weight_{1,\into}(\dl)}\left(t^{\weight_1(\dl)}x_2x_4-t^{-\ell_3-\ell_5}x_3x_5\right)\\
&=t^{\ell_3+\ell_5}x_2x_4-x_3x_5\\
g_\dl\ksup{1}&=t^{\weight_{1,\tau}(\dl)}\pi_1\!\left(h_\dl\ksup{1}\right)\\
&=t^{\ell_1}\left(t^{\ell_3+\ell_5}x_2x_4-t^{-\ell_1}x_1x_5\right)\\
&=t^{\ell_1+\ell_3+\ell_5}x_2x_4-x_1x_5\\
g_\dl\ksup{2}&=t^{\weight_{2,\into}(\dl)}\left(t^{\weight_2(\dl)}x_2x_4-t^{-\ell_1-\ell_3-\ell_5}x_1x_5\right)\\
&=t^{\ell_1+\ell_3+\ell_5}x_2x_4-x_1x_5
\end{align*}

\begin{align*}
h_{\gm\cup\dl}\ksup{1}&=t^{\weight_{1,\into}(\gm\cup\dl)}\left(t^{\weight_1(\gm\cup\dl)}x_1x_4-t^{-\ell_3-\ell_5}x_3x_5\right)\\
&=t^{\ell_0+\ell_2+\ell_3+\ell_5}x_1x_4-x_3x_5\\
g_{\gm\cup\dl}\ksup{1}&=t^{\weight_{1,\tau}(\gm\cup\dl)}\pi_1\!\left(h_{\gm\cup\dl}\ksup{1}\right)\\
&=t^{\ell_1}\left(t^{\ell_0+\ell_2+\ell_3+\ell_5}x_1x_4-t^{-\ell_1}x_1x_5\right)\\
&=t^{\ell_0+\ell_1+\ell_2+\ell_3+\ell_5}x_1x_4-x_1x_5\\
g_{\gm\cup\dl}\ksup{2}&=t^{\weight_{2,\into}(\gm\cup\dl)}\left(t^{\weight_2(\gm\cup\dl)}x_4-t^{-\ell_5}x_5\right)\\
&=t^{\ell_0+\ell_1+\ell_2+\ell_3+\ell_5}x_4-x_5
\end{align*}

\begin{figure}
\begin{center}
$$\scalebox{1.3}{\input{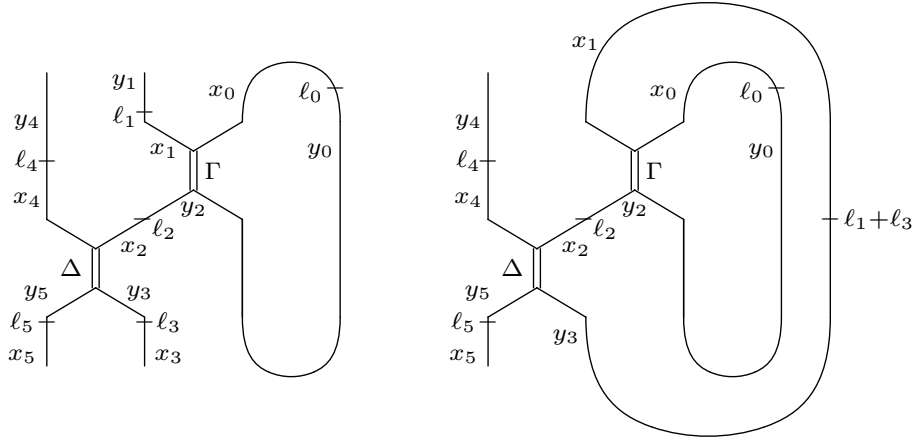}}$$
\caption{Modification of Figure~\ref{smallexample} for arbitrary framings.}
\label{smalltwistedexample}
\end{center}
\end{figure}

\subsection{Buchberger's Algorithm}
We use the same monomial order on $\cE_k$ from Definition~\ref{edgeringmo}. Coefficients play no role in the definition or application of a monomial order, so our analysis of the leading terms of standard generators of $N(G\ksup{k})$ and $Z_k$ in Sections~\ref{mosec} and~\ref{startingbasissec} goes through unchanged for arbitrary framings.

The propositions in which S-polynomials are computed also change very little, since analysis of greatest common divisors and least common multiples of leading monomials is not affected by the presence of coefficients. We merely have to check that the exponents on the factors of $t$ work out correctly.  They do, because both $\weight_i+\weight_{i,\into}$ and $\weight_{k,\tau}$ are additive under disjoint union. Define a total weight $\Weight$ by
\[\Weight(\gm)=\weight_k(\gm)+\weight_{k,\into}(\gm)+\weight_{k,\tau}(\gm)=\weight_{k+1}(\gm)+\weight_{k+1,\into}(\gm).\] Note that $\Weight$ is also additive under disjoint union.

Analyses involving the monomial order are also unaffected by the presence of coefficients, so reduction arguments generally proceed in the same way as before. In particular, Observation~\ref{MOanddiv} carries through unchanged.

One technical note is in order before we check through Sections~\ref{bbrd1} and~\ref{bbrd2}: We will clear denominators before adding any new generator to our working basis. This does not affect the progress of the algorithm. If $f, g, h\in\itk[\underline{x}]$ and $a\in\itk$, then $S(af,g)=S(f,g)$ and $g\xrightarrow{f}h$ if any only if $g\xrightarrow{af}h$. That is, multiplying a generator in the working basis by an element of the ground field does not affect its properties with respect to S-polynomials or the division algorithm, which means that it does not affect its role in Buchberger's Algorithm.

\subsection*{Reprise of Section~\ref{nutoponespolys}}

Proposition~\ref{nutopone} is already stated in sufficient generality for arbitrary framings. After clearing denominators, the application of Proposition~\ref{nutopone} goes through as explained in the blackboard case. In particular, it remains true that if $\ztau\ksup{k+1}$ does not divide both terms of $g_\gm\ksup{k}$, then $g_\gm\ksup{k}=g_\gm\ksup{k+1}$. So, if $\ztau\ksup{k+1}$ fails to divide the leading term of $g_\gm\ksup{k}$, then we apply Statement~(\ref{nutop2nunodiv}) of Proposition~\ref{nutopone} and add $\ztau\ksup{k+1}g_\gm\ksup{k+1}$ to the working basis $\cG^\prime$.

Let $g_\gm^{(k),\out}$, $g_\gm^{(k),\into}$, $g_\gm^{(k+1),\out}$, and $g_\gm^{(k+1),\into}$ be monomials defined exactly as in the blackboard case (i.e., all with coefficient 1). Define the tilde generators to be
\begin{align*}
\widetilde{g}_\gm&=\nu g_\gm^{(k),\into}-t^{\Weight(\gm)}g_\gm^{(k), \out} \text{ if } \gm \text{ is out-led}\\
\widetilde{g}_\gm&= \nu t^{\Weight(\gm)}g_\gm^{(k), \out}-g_\gm^{(k), \into}
\text{ if } \gm \text{ is in-led.}
\end{align*}
If $\ztau\ksup{k+1}$ divides the leading term of $g_\gm\ksup{k}$, then we apply Statement~(\ref{nutop2nu}) of Proposition~\ref{nutopone} to see that the S-polynomial between $\nutopk$ and $\nu g_\gm\ksup{k}$ still produces $\widetilde{g}_\gm$, possibly after clearing denominators. If $\ztau\ksup{k+1}$ also divides the trailing term of $g_\gm\ksup{k}$, then we have the same reduction to $\ztau\ksup{k+1}g_\gm\ksup{k+1}$ as in the blackboard case.

Overall, then, S-polynomials between $\nutopk$ and generators of the form $\nu g_\gm\ksup{k}$ yield the working basis $\cG^\prime$ defined at the end of Section~\ref{nutoponespolys}.

\subsection*{Reprise of Section~\ref{subsetspolys}} Propositions~\ref{separateinteriorclosure},~\ref{subsetoverlapinterior},~\ref{subsetoverlapclosure}, and~\ref{subsetoverlapboundary} are not specific to the blackboard case. We may still use these as building blocks to describe the outcome of S-polynomials among generators of the form $\nu g_\gm\ksup{k}$ for arbitrary framings -- that is, to prove the following analogue of Proposition~\ref{ksubsetoverlap}. 

\begin{prop}[Analogue of Proposition~\ref{ksubsetoverlap}]
\label{twistedksubsetoverlap}
Let $\gm, \dl\subset G$. After clearing denominators, the following statements hold in $\cE_{k+1}$, up to multiplication by non-zero elements of the ground field.
\begin{align*}
\tag{1}\label{twistedksubsetoverlapout}
\text{If $\gm$ and $\dl$ are both out-led, then}\\
S(g_\gm\ksup{k+1},g_\dl\ksup{k+1})=&\,\xsub{G\sm(\gm\cup\dl)}{\gm\cap\dl}\zsub{G\sm(\gm\cup\dl)}{\gm\cap\dl}\ksup{k+1}\zsub{\beta}{\gm\cap\dl}\ksup{k+1}\\
&\cdot\left(t^{\Weight(\dl\sm(\gm\cap\dl))}g_{\dl\sm(\gm\cap\dl)}^{(k+1), \out}g_{\gm\sm(\gm\cap\dl)}^{(k+1), \into}\right.\\
&\left.-\,t^{\Weight(\gm\sm(\gm\cap\dl))}g_{\gm\sm(\gm\cap\dl)}^{(k+1), \out}g_{\dl\sm(\gm\cap\dl)}^{(k+1), \into}\right)\\
&\xrightarrow{g_{\gm\sm(\gm\cap\dl)}\ksup{k+1},g_{\dl\sm(\gm\cap\dl)}\ksup{k+1}}0\\
\tag{2}\label{twistedksubsetoverlapin}
\text{If $\gm$ and $\dl$ are both in-led then}\\
S(g\ksup{k+1}_\gm,g\ksup{k+1}_\dl)&=\,\xsub{\gm\cap\dl}{G\sm(\gm\cup\dl)}\zsub{\gm\cap\dl}{G\sm(\gm\cup\dl)}\ksup{k+1}\zsub{\gm\cap\dl}{\tau}\ksup{k+1}\\
&\cdot\left(t^{\Weight(\gm\sm(\gm\cap\dl))}g_{\dl\sm(\gm\cap\dl)}^{(k+1),\into}g_{\gm\sm(\gm\cap\dl)}^{(k+1),\out}\right.\\
&\left.-\,t^{\Weight(\dl\sm(\gm\cap\dl))}g_{\gm\sm(\gm\cap\dl)}^{(k+1),\into}g_{\dl\sm(\gm\cap\dl)}^{(k+1),\out}\right)\\
&\xrightarrow{g_{\gm\sm(\gm\cap\dl)}\ksup{k+1},g_{\dl\sm(\gm\cap\dl)}\ksup{k+1}}0\\
\tag{3}\label{twistedksubsetoverlapmix}
\text{If $\gm$ is in-led and $\dl$ is out-led, then}\\
S(g_\gm\ksup{k+1},g_\dl\ksup{k+1})&=\,\xsub{\gm\sm(\gm\cap\dl)}{\dl\sm(\gm\cap\dl)}\zsub{\gm\sm(\gm\cap\dl)}{\dl\sm(\gm\cap\dl)}\ksup{k+1}\\
&\cdot\left(t^{\Weight(\gm\cup\dl)+\Weight(\gm\cap\dl)}g_{\gm\cup\dl}^{(k+1),\out} g_{\gm\cap\dl}^{(k+1),\out}\right.\\
&\left.-\,g_{\gm\cup\dl}^{(k+1),\into} g_{\gm\cap\dl}^{(k+1),\into}\right)\\
&\xrightarrow{g_{\gm\cup\dl}^{(k+1)},g_{\gm\cap\dl}\ksup{k+1}}0.
\end{align*}
\end{prop}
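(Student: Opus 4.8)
The plan is to mirror exactly the structure of the proof of Proposition~\ref{ksubsetoverlap}, which reduced the blackboard-framed computation to the three building-block Propositions~\ref{separateinteriorclosure},~\ref{subsetoverlapinterior},~\ref{subsetoverlapclosure}, and~\ref{subsetoverlapboundary}. Those four propositions were never blackboard-specific: they only track least common multiples and greatest common divisors of leading \emph{monomials}, and monomial orders ignore coefficients. So the only new work is bookkeeping on the exponents of $t$. First I would recall from Equation~\ref{ggmktw} and the surrounding discussion in Section~\ref{twistedsetup} that, after clearing denominators, a generator of $N_{k+1}$ has the form of Equation~\ref{ggmkplusonetw}, namely its outgoing monomial carries the coefficient $t^{\Weight(\gm)}$ (where $\Weight(\gm)=\weight_{k+1}(\gm)+\weight_{k+1,\into}(\gm)$) and its incoming monomial carries coefficient $1$. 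The crucial fact, already recorded in Section~\ref{twistedsetup}, is that $\Weight$ is additive under disjoint union.

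Next I would run the S-polynomial computation case by case, following the template of the proof of Proposition~\ref{subsetoverlapinterior}. In each case the gcd of the leading monomials corresponds to an intersection/union of subsets exactly as before, so the monomial factoring is unchanged; only the coefficients are new. Consider Case~(3), $\gm$ in-led and $\dl$ out-led. The leading term of $g_\gm\ksup{k+1}$ is its incoming monomial (coefficient $1$), and the leading term of $g_\dl\ksup{k+1}$ is its outgoing monomial (coefficient $t^{\Weight(\dl)}$). When we form $S(g_\gm\ksup{k+1},g_\dl\ksup{k+1})$, the trailing monomials — $t^{\Weight(\gm)}g_\gm^{(k+1),\out}$ and $g_\dl^{(k+1),\into}$ — get multiplied by the complementary lcm factors, and after refactoring via the monomial identities in Equation~\ref{altfactorzn} one obtains the factor $\xsub{\gm\sm(\gm\cap\dl)}{\dl\sm(\gm\cap\dl)}\zsub{\gm\sm(\gm\cap\dl)}{\dl\sm(\gm\cap\dl)}\ksup{k+1}$ times a two-term expression whose outgoing--outgoing term carries coefficient $t^{\Weight(\gm)}$ (the one $t$-power from $g_\gm\ksup{k+1}$'s trailing term) divided by whatever $t$-power sat on $g_\dl\ksup{k+1}$'s leading term, i.e. $t^{\Weight(\gm)-\Weight(\dl)}$, and whose incoming--incoming term carries $t^{-\Weight(\dl)}$ similarly. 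Clearing denominators (multiplying by $t^{\Weight(\dl)}$, which is harmless by the technical note in Section~\ref{nonbbframings} on multiplying basis elements by ground-field units) leaves coefficients $t^{\Weight(\gm)}$ and $1$. To see this matches the statement, note that the two-term expression is precisely $t^{\Weight(\gm\cap\dl)}g_{\gm\cap\dl}^{(k+1),\out}\cdot g_{\gm\cup\dl}^{(k+1),\out}$ minus $g_{\gm\cap\dl}^{(k+1),\into}\cdot t^{\Weight(\gm\cup\dl)}g_{\gm\cup\dl}^{(k+1),\into}$, up to an overall power of $t$; additivity $\Weight(\gm)=\Weight(\gm\cap\dl)+\Weight(\gm\sm(\gm\cap\dl))$ and $\Weight(\gm\cup\dl)=\Weight(\dl)+\Weight(\gm\sm(\gm\cap\dl))$ is what makes the surviving coefficient on the outgoing--outgoing term equal $t^{\Weight(\gm\cup\dl)+\Weight(\gm\cap\dl)}$ relative to the incoming--incoming term, as claimed. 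Cases~(1) and~(2) are entirely analogous, with $\Weight$ of the two "difference" subsets appearing on the two products, exactly matching Proposition~\ref{twistedksubsetoverlap}(1),(2); Case~(2) follows from Case~(1) by taking complements just as before.

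Finally, for the reduction-to-zero statements, I would invoke Proposition~\ref{unorderedreduction} exactly as in Proposition~\ref{ksubsetoverlap}: the two-term expression in each case, whatever its term order, is of the form $ps - rq$ (or its reverse) where $p-q$ and $r-s$ are scalar multiples of the relevant generators $g_\Lambda\ksup{k+1}$. The presence of the $t$-power coefficients changes nothing, since $S(af,g)=S(f,g)$ and $g\xrightarrow{af}h\iff g\xrightarrow{f}h$ for $a$ a ground-field unit (again the technical note of Section~\ref{nonbbframings}). I expect the main obstacle to be purely clerical: correctly matching up the $t$-powers produced by the S-polynomial bookkeeping with the $t$-powers in the claimed right-hand side, which requires being careful about which monomial led each input generator and hence which $t$-power got divided out when forming the lcm quotient. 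The additivity of $\Weight$ under disjoint union is the one structural input that makes all the exponents reconcile, and once that is in hand the argument is a transcription of the blackboard case with coefficients carried along. Thus the proof reads: "All three cases follow from applying Propositions~\ref{subsetoverlapinterior},~\ref{subsetoverlapclosure}, and~\ref{subsetoverlapboundary}, tracking the factors of $t$ using the additivity of $\Weight$ under disjoint union and clearing denominators by ground-field units; the reduction statements then follow from Proposition~\ref{unorderedreduction} as in Proposition~\ref{ksubsetoverlap}."
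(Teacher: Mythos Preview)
Your proposal is correct and follows essentially the same approach as the paper: apply Propositions~\ref{separateinteriorclosure},~\ref{subsetoverlapinterior},~\ref{subsetoverlapclosure}, and~\ref{subsetoverlapboundary} to the generators in the form of Equation~\ref{ggmkplusonetw}, track the $t$-exponents via additivity of $\Weight$ under disjoint union, clear denominators, and invoke Proposition~\ref{unorderedreduction} for the reductions. Your Case~(3) walkthrough is more explicit than the paper's terse one-line proof, but the underlying argument is identical.
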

\begin{proof}
Apply Propositions~\ref{separateinteriorclosure},~\ref{subsetoverlapinterior},~\ref{subsetoverlapclosure}, and~\ref{subsetoverlapboundary} to the definitions of $g_\gm\ksup{k+1}$ and $g_\dl\ksup{k+1}$ given in Equation~\ref{ggmkplusonetw}, then multiply by appropriate powers of $t$ to clear any negative exponents. The reduction statements follow from Proposition~\ref{unorderedreduction}.
\end{proof}

Lemma~\ref{subsetoverlap} remains true as stated. We follow through the details of the proof just to be careful.

\begin{proof}[Proof of Lemma~\ref{subsetoverlap} for arbitrary framings]
We established at the beginning of Section~\ref{nonbbframings} that \[g_\gm\ksup{k}=\anyzeta_\gm g_\gm\ksup{k+1}\] holds for arbitrary framings. We have also seen that Proposition~\ref{unorderedreduction} and Observation~\ref{MOanddiv} carry through unchanged, so the division into cases in the proof of Lemma~\ref{subsetoverlap} remains valid. Proposition~\ref{twistedksubsetoverlap} (the analogue of Proposition~\ref{ksubsetoverlap}) immediately allows us to reduce the S-polynomials in Cases 1 and 2(a) to zero using generators in the working basis.

The analysis of Case 2(b) proceeds as before, establishing that $\ztau\ksup{k+1}$ goes between $\gm\cap\dl$ and $\gm\sm(\gm\cap\dl)$. If $\ztau\ksup{k+1}$ is outgoing from $\gm\cap\dl$, then Proposition~\ref{twistedksubsetoverlap} says that
\begin{align*}
\nu S(g_\gm\ksup{k+1},g_\dl\ksup{k+1})&=\,\nu \xsub{\gm\sm(\gm\cap\dl)}{\dl\sm(\gm\cap\dl)}\zsub{\gm\sm(\gm\cap\dl)}{\dl\sm(\gm\cap\dl)}\ksup{k+1}\\
&\cdot\left(t^{\Weight(\gm\cup\dl)+\Weight(\gm\cap\dl)}g_{\gm\cup\dl}^{(k+1),\out} g_{\gm\cap\dl}^{(k+1),\out}-g_{\gm\cup\dl}^{(k+1),\into} g_{\gm\cap\dl}^{(k+1),\into}\right)
\end{align*}
with the term order shown. As in the original proof, we may reduce by $\nu\ztau\ksup{k+1}-\ztau\ksup{k+1}$ to obtain
\begin{align*}
\nu S(g_\gm\ksup{k+1},g_\dl\ksup{k+1})\xrightarrow{\nutopk}\, &\xsub{\gm\sm(\gm\cap\dl)}{\dl\sm(\gm\cap\dl)}\zsub{\gm\sm(\gm\cap\dl)}{\dl\sm(\gm\cap\dl)}\ksup{k+1}\\
&\cdot\left(\nu g_{\gm\cup\dl}^{(k+1),\into} g_{\gm\cap\dl}^{(k+1),\into}\right.\\
&\left. -\, t^{\Weight(\gm\cup\dl)+\Weight(\gm\cap\dl)}g_{\gm\cup\dl}^{(k+1),\out} g_{\gm\cap\dl}^{(k+1),\out}\right)
\end{align*}
and then by $$\widetilde{g}_{\gm\cap\dl}=\nu g_{\gm\cap\dl}^{(k+1),\into} - t^{\Weight(\gm\cap\dl)}g_{\gm\cap\dl}^{(k+1),\out}$$
to produce
\begin{align*}
&\quad\xsub{\gm\sm(\gm\cap\dl)}{\dl\sm(\gm\cap\dl)}\zsub{\gm\sm(\gm\cap\dl)}{\dl\sm(\gm\cap\dl)}\ksup{k+1}t^{\Weight(\gm\cap\dl)}g_{\gm\cap\dl}^{(k+1),\out}g_{\gm\cup\dl}\ksup{k+1}
\end{align*}
and finally by $\ztau\ksup{k+1}g_{\gm\cup\dl}\ksup{k+1}$ to get zero. There is a similar argument for the reduction if $\ztau\ksup{k+1}$ is incoming to $\gm\cap\dl$.

In the analysis of Case 3, the initial argument remains the same, so we may assume without loss of generality that $\ztau\ksup{k+1}$ goes from $\dl\sm(\gm\cap\dl)$ to $\gm\sm(\gm\cap\dl)$. We may likewise assume that the working basis contains tilde generators for $\gm$ and $\dl$ with the forms
\[\widetilde{g}_\gm = \nu t^{\Weight(\gm)} g_\gm^{(k+1),\out} - g_\gm^{(k+1),\into} \quad\text{and}\quad\widetilde{g}_\dl = \nu g_\dl^{(k+1),\into} - t^{\Weight(\dl)}g_\dl^{(k+1),\out}.\]
The appropriate re-factoring of Statement (3) in Proposition~\ref{twistedksubsetoverlap} is then 
\begin{align*}
\nu S(g_\gm\ksup{k+1},\, &g_\dl\ksup{k+1})=\\
%
&\nu t^{\Weight(\dl)}\xsub{\dl\sm(\gm\cap\dl)}{G\sm(\gm\cup\dl)}\xsub{\gm\cap\dl}{G\sm\dl}\\
\,\cdot&\zsub{\dl\sm(\gm\cap\dl)}{G\sm(\gm\cup\dl)}\zsub{\gm\cap\dl}{G\sm\dl}\zsub{\dl}{\tau}\ksup{k+1}
  t^{\Weight(\gm)}g_\gm^{(k+1),\out}\\
\,- &\nu\xsub{G\sm(\gm\cup\dl)}{\gm\sm(\gm\cap\dl)}\xsub{G\sm\gm}{\gm\cap\dl}\zsub{G\sm(\gm\cup\dl)}{\gm\sm(\gm\cap\dl)}\zsub{G\sm\gm}{\gm\cap\dl}\zsub{\beta}{\gm}\ksup{k+1}
 g_{\dl}^{(k+1),\into}
\end{align*}
with unknown term order. As in the argument for blackboard framings, this expression is reducible by either $\widetilde{g}_\gm$ or $\widetilde{g}_\dl$ depending on its term order, and then by whichever of the tilde generators was not already used. For the sake of illustration, we reduce by $\widetilde{g}_\gm$ and then $\widetilde{g}_\dl$ as follows.
\begin{align*}
\nu S(&g_\gm\ksup{k+1}, g_\dl\ksup{k+1})\xrightarrow{\widetilde{g}_\gm}\\
&\nu\xsub{G\sm(\gm\cup\dl)}{\gm\sm(\gm\cap\dl)}\xsub{G\sm\gm}{\gm\cap\dl}\zsub{G\sm(\gm\cup\dl)}{\gm\sm(\gm\cap\dl)}\zsub{G\sm\gm}{\gm\cap\dl}\zsub{\beta}{\gm}\ksup{k+1}
 g_{\dl}^{(k+1),\into}\\
%
\,- & t^{\Weight(\dl)}\xsub{\dl\sm(\gm\cap\dl)}{G\sm(\gm\cup\dl)}\xsub{\gm\cap\dl}{G\sm\dl}\zsub{\dl\sm(\gm\cap\dl)}{G\sm(\gm\cup\dl)}\zsub{\gm\cap\dl}{G\sm\dl}\zsub{\dl}{\tau}\ksup{k+1}
  g_\gm^{(k+1),\into}\\
\xrightarrow{\widetilde{g}_\dl}\\
& t^{\Weight(\dl)}\xsub{\dl\sm(\gm\cap\dl)}{G\sm(\gm\cup\dl)}\xsub{\gm\cap\dl}{G\sm\dl}\zsub{\dl\sm(\gm\cap\dl)}{G\sm(\gm\cup\dl)}\zsub{\gm\cap\dl}{G\sm\dl}\zsub{\dl}{\tau}\ksup{k+1}
  g_\gm^{(k+1),\into}\\
\,- &t^{\Weight(\dl)}\xsub{G\sm(\gm\cup\dl)}{\gm\sm(\gm\cap\dl)}\xsub{G\sm\gm}{\gm\cap\dl}\zsub{G\sm(\gm\cup\dl)}{\gm\sm(\gm\cap\dl)}\zsub{G\sm\gm}{\gm\cap\dl}\zsub{\beta}{\gm}\ksup{k+1}
 g_{\dl}^{(k+1),\out}.
\end{align*}
The final expression can be seen to be zero by re-factoring exactly as in the original proof.
\end{proof}

This completes our reprise of Section~\ref{bbrd1}. We are left with the same working basis $\cG^\prime$ as in the blackboard case, given the changes to notation described at the beginning of Section~\ref{nonbbframings}.

\subsection*{Reprise of Section~\ref{bbrd2}}
As in the blackboard case, we must confirm that S-polynomials involving the generators added to the working basis in Round 1 may be reduced to zero within the working basis. The arguments generalize easily to arbitrary framings, but we verify them line-by-line for the sake of completeness.

Applying Statement~(\ref{nutop0nu}) of Proposition~\ref{nutopone} to $S(\nu\ztau\ksup{k+1}-\ztau\ksup{k+1},\ztau\ksup{k+1}g_\gm\ksup{k+1})$ produces a multiple of the blackboard result by $1/\lc{g_\gm\ksup{k+1}}$. That result can be reduced to zero in the same way as before just by multiplying each step of the computation by the same factor of $1/\lc{g_\gm\ksup{k+1}}$. So, as before, \[S(\nutopk,\ztau\ksup{k+1}g_\gm\ksup{k+1})\xrightarrow{\nutopk, \ztau\ksup{k+1}g_\gm\ksup{k+1}}0.\]

Propositions~\ref{gbprinciplecoeff} and~\ref{twistedksubsetoverlap} may be applied to
$S(\ztau\ksup{k+1}g_\gm\ksup{k+1},\ztau\ksup{k+1}g_\dl\ksup{k+1})$ to see that it reduces by some combination of $\ztau\ksup{k+1}g_\Lambda\ksup{k+1}$ for $\Lambda\in\{\gm\sm(\gm\cap\dl)$, $\dl\sm(\gm\cap\dl)$, $\gm\cap\dl$, $\gm\cup\dl\}$, all of which are in $\cG^\prime$.

Lemma~\ref{ztautilde} holds as stated for arbitrary framings. The proof is the same, except that the computations must be multiplied by $\displaystyle{\frac{1}{\lc{g_\gm\ksup{k+1}}\tc{g_\dl\ksup{k+1}}}},$ where $\mathrm{TC}$ denotes the coefficient on the trailing term. These factors do not affect any analysis about leading terms or the availability of the generators needed to reduce the S-polynomial to zero, so the argument for reduction to zero is unchanged.

Lemma~\ref{doubletilde} also holds as stated for arbitrary framings. The expression that must be reduced in the proof for arbitrary framings is the product of the expression in Line~\ref{expandmedoubletilde} of the blackboard case with $\displaystyle{\frac{1}{\tc{g_\gm\ksup{k+1}}\tc{g_\dl\ksup{k+1}}}}$. The method of reduction then depends on how this expression expands in terms of $g_\Lambda\ksup{k+1}$ for $\Lambda\in\left\{\gm\sm(\gm\cap\dl),\dl\sm(\gm\cap\dl),\gm\cap\dl,\gm\cup\dl\right\}$. Just as the possible cases paralleled Proposition~\ref{ksubsetoverlap} in the blackboard setting, they parallel Proposition~\ref{twistedksubsetoverlap} in the non-blackboard setting. In each case, the arguments about term order and reduction to zero by generators in $\cG^\prime$ are identical in the blackboard and non-blackboard settings.

The proof of Lemma~\ref{2nunonu} relies entirely on results that we have already generalized to arbitrary framings, so we may conclude that the proposition holds as stated for arbitrary framings.

Finally, Lemma~\ref{tildeprop} holds as stated. Since its proof involves some explicit calculations, we check it line by line. 

\begin{proof}[Proof of Lemma~\ref{tildeprop} for arbitrary framings]
The breakdown into cases is unaffected by the presence of coefficients. In Cases 1 and 2, the expanded S-polynomials for arbitrary framings are the product of the expressions in the blackboard case by a factor of $\displaystyle{\frac{1}{\lc{g_\gm\ksup{k+1}}\tc{g_\dl\ksup{k+1}}}}$. Multiplying through by that factor while reducing makes the reduction by $\nu\ztau\ksup{k+1}-\ztau\ksup{k+1}$ and $\ztau\ksup{k+1}g_\Lambda\ksup{k+1}$ or $\widetilde{g}_\gm$ and $\ztau\ksup{k+1}g_\dl\ksup{k+1}$ possible, just as before.

Case 3 breaks into subcases as in the blackboard case. When $\ztau\ksup{k+1}$ goes from $G\sm(\gm\cup\dl)$ to $\dl\sm(\gm\cap\dl)$, the expanded S-polynomial expression with coefficients is 
\begin{align*}
S(g_\gm\ksup{k}, \widetilde{g}_\dl)
=&\,\xsub{G\sm(\gm\cap\dl)}{\gm\cap\dl}\zsub{G\sm(\gm\cap\dl)}{\gm\cap\dl}\ksup{k+1}\zsub{\beta}{\gm\cap\dl}\ksup{k+1}\\
&\cdot\,\left(\nu t^{\Weight(\dl\sm(\gm\cap\dl)}g_{\dl\sm(\gm\cap\dl)}^{(k+1),\out}g_{\gm\sm(\gm\cap\dl)}^{(k+1),\into} - t^{\Weight(\gm\sm(\gm\cap\dl))}g_{\gm\sm(\gm\cap\dl)}^{(k+1),\out}g_{\dl\sm(\gm\cap\dl)}^{(k+1),\into}\right).
\end{align*}
This expression reduces by $\widetilde{g}_{\dl\sm(\gm\cap\dl)}$ and then $\ztau\ksup{k+1}g_{\gm\sm(\gm\cap\dl)}\ksup{k+1}$ as before. When $\ztau\ksup{k+1}$ goes from $\dl\sm(\gm\cap\dl)$ to $G\sm(\gm\cup\dl)$, the expanded S-polynomial is
\begin{align*}
S(g_\gm\ksup{k}, \widetilde{g}_\dl)=&\,\xsub{\dl\sm(\gm\cap\dl)}{\gm\sm(\gm\cap\dl)}\zsub{\dl\sm(\gm\cap\dl)}{\gm\sm(\gm\cap\dl)}\ksup{k+1}\\
&\cdot\,\left(\nu g_{\gm\cup\dl}^{(k+1),\into}g_{\gm\cap\dl}^{(k+1),\into}-t^{\Weight(\gm\cup\dl)}g_{\gm\cup\dl}^{(k+1),\out}t^{\Weight(\gm\cap\dl)}g_{\gm\cap\dl}^{(k+1),\out}\right),
\end{align*}
which reduces by $\widetilde{g}_{\gm\cup\dl}$ and then $\ztau\ksup{k+1}g_{\gm\cap\dl}\ksup{k+1}$, as before.
\end{proof}

This completes the analysis of S-polynomials and reductions parallel to Section~\ref{bbrd2}, along with the proof of Lemma~\ref{buchend} for arbitrary framings. Since all of the necessary reductions to zero can be accomplished without expanding the working basis, Buchberger's Algorithm terminates with the working basis $\cG^\prime$ described above. The proof of Theorem~\ref{idealqthmintro} from Lemma~\ref{buchend} is identical to that in Section~\ref{lemmatothmsec}. Intersecting with $\cE_{k+1}$, then dividing each of the remaining generators by $\ztau\ksup{k+1}$ we produce the set of $g_\gm\ksup{k+1}$ as a basis for the ideal quotient $\pi_k\!\left(N_k\right):\left(\ztau\ksup{k+1}\right)$. Therefore, we have established Theorem~\ref{idealqthmintro} in the full generality that it was stated.

\bibliographystyle{amsplain}
\bibliography{/Users/allisongilmore/Dropbox/Math/writing/universalmathbib}

\providecommand{\bysame}{\leavevmode\hbox to3em{\hrulefill}\thinspace}
\providecommand{\MR}{\relax\ifhmode\unskip\space\fi MR }
\providecommand{\MRhref}[2]{%
  \href{http://www.ams.org/mathscinet-getitem?mr=#1}{#2}
}
\providecommand{\href}[2]{#2}
\begin{thebibliography}{10}

\bibitem{gbbook}
W.W. Adams and P.~Loustaunau, \emph{An introduction to {G}r\"obner bases},
  American Mathematical Society, 1994.

\bibitem{buch1}
Bruno Buchberger, \emph{Ein {A}lgorithmus zum {A}uffinden der {B}asiselemente
  des {R}estklassenringes nach einem nulldimensionalen {P}olynomideal}, Ph.D.
  thesis, Leopold-Franzens Universitaet Innsbruck, Innsbruck, Austria, 1965.

\bibitem{dgr}
Nathan~M. Dunfield, Sergei Gukov, and Jacob Rasmussen, \emph{{The
  superpolynomial for knot homologies}}, Experimental Mathematics \textbf{15}
  (2006), no.~2, 129--159, Preprint \arxiv{math/0505662v2}.

\bibitem{gilmorethesis}
Allison Gilmore, \emph{Knot {F}loer homology and categorification}, Ph.D.
  thesis, Columbia University, 2011.

\bibitem{reidmoves}
\bysame, \emph{Invariance and the knot {F}loer cube of resolutions},  (2013),
  Preprint \arxiv{1007.2609v2}.

\bibitem{macaulay2}
Daniel~R. Grayson and Michael~E. Stillman, \emph{Macaulay2, a software system
  for research in algebraic geometry}, available at
  \href{http://www.math.uiuc.edu/Macaulay2/}%
  {http://www.math.uiuc.edu/Macaulay2/}.

\bibitem{khovHH}
Mikhail Khovanov, \emph{{Triply-graded link homology and Hochschild homology of
  Soergel bimodules}}, International Journal of Mathematics \textbf{18} (2007),
  no.~8, 869--886, Preprint \arxiv{math/0510265v3}.

\bibitem{kr2}
Mikhail Khovanov and Lev Rozansky, \emph{Matrix factorizations and link
  homology {II}}, Geometry and Topology \textbf{12} (2008), 1387--1425,
  Preprint \arxiv{math/0505056v2}.

\bibitem{krasner}
Daniel Krasner, \emph{Integral {HOMFLY}-{PT} and $\mathfrak{sl}(n)$-link
  homology}, International Journal of Mathematics and Mathematial Sciences
  (2010), Preprint \arxiv{0910.1790v1}.

\bibitem{manolescucube}
Ciprian Manolescu, \emph{An untwisted cube of resolutions for knot {F}loer
  homology},  (2011), Preprint \arxiv{1108.0032v1}.

\bibitem{ozsszstipsing}
Peter Ozsv\'ath, Andr\'as Stipsicz, and Zolt\'an Szab\'o, \emph{{Floer homology
  and singular knots}}, Journal of Topology \textbf{2} (2009), no.~2, 380,
  Preprint \arxiv{0705.2661v3}.

\bibitem{ozsszcube}
Peter Ozsv\'ath and Zolt\'an Szab\'o, \emph{{A cube of resolutions for knot
  Floer homology}}, Journal of Topology \textbf{2} (2009), no.~4, 865, Preprint
  \arxiv{0705.3852v1}.

\bibitem{rasmussenonkr}
Jacob Rasmussen, \emph{Some differentials on {K}hovanov-{R}ozansky homology},
  preprint \arxiv{0607544v2}. (2006).

\bibitem{viroquantumrel}
Oleg Viro, \emph{Quantum relatives of {A}lexander polynomial}, St. Petersburg
  Math. J. \textbf{18} (2007), no.~3, 391--457, preprint
  \arxiv{math/0204290v1}.

\bibitem{zarev}
Rumen Zarev, \emph{Bordered {F}loer homology for sutured manifolds},  (2009),
  Preprint \arxiv{0908.1106}.

\end{thebibliography}
\end{document}